 \newtheorem{thm}{Theorem}[section]
 \newtheorem{cor}[thm]{Corollary}
 \newtheorem{lem}[thm]{Lemma}
 \newtheorem{prop}[thm]{Proposition}
 \theoremstyle{definition}
 \newtheorem{defn}[thm]{Definition}
 \theoremstyle{remark}
 \newtheorem*{ex}{Example}
 \numberwithin{equation}{section}
\newcommand{\DD}[1]{\mathbin{\frac{\rm d }{{\rm d }#1}}}
\newcommand{\ddt}{\DD t}
\newcommand{\krein}{Kre\u\i n}
\newcommand{\ud}{\,{\mathrm d}}
\newcommand{\spn}{{\rm span}}
\newcommand \R{{\mathbb R}}
\newcommand \C{{\mathbb C}}
\newcommand \D{{\mathbb D}}
\newcommand \rplus{{\R^{+}}}
\newcommand \cplus{{\C^{+}}}
\newcommand \cminus{{\C^{-}}}
\newcommand{\crange}[1]{\overline{\mathrm{ran}}\left(#1\right)}
\newcommand{\ya}{\reflectbox{\rm R}}     
\newcommand{\bmu}{{\boldsymbol \mu}}
\newcommand{\blam}{{\boldsymbol \lambda}}
\newcommand \rminus{{\R^{-}}}
\newcommand \T{{\mathbb T}}
\newcommand \zero{\set{0}}
\newcommand{\Hscr}{\mathcal H}
\newcommand{\Iscr}{\mathcal I}
\newcommand{\Lscr}{\mathcal L}
\newcommand{\Sscr}{\mathcal S}
\newcommand{\Uscr}{\mathcal U}
\newcommand{\Xscr}{\mathcal X}
\newcommand{\Yscr}{\mathcal Y}
\newcommand{\Zscr}{\mathcal Z}
\newcommand{\Wscr}{\mathcal W}
\newcommand{\AB}{{A\& B}}
\newcommand{\CD}{{C\& D}}
\newcommand{\SysNode}{\bbm{\AB \cr \CD}}
\newcommand{\SmallSysNode}{\sbm{\AB \cr \CD}}
\newcommand{\dom}[1]{{{\rm dom}}\left(#1\right)}
\newcommand{\range}[1]{{\rm im}\bigl(#1\bigr)}
\newcommand{\Ker}[1]{{\rm ker}\left(#1\right)}
\newcommand{\Ipd}[2]{\left\langle #1 , #2 \right\rangle}
\newcommand{\Ipdp}[2]{\left( #1 , #2 \right)}
\newcommand{\set}[1]{\left\lbrace #1 \right\rbrace}
\newcommand{\bigmid}{\bigm \vert}
\newcommand{\bi}{\begin{itemize}}
\newcommand{\ei}{\end{itemize}}
\newcommand{\be}{\begin{enumerate}}
\newcommand{\ee}{\end{enumerate}}
\newcommand{\Bfrak}{\mathfrak B}
\newcommand{\Cfrak}{\mathfrak C}
\newcommand{\Dfrak}{\mathfrak D}
\newcommand{\Rfrak}{\mathfrak R}
\newcommand{\Sfrak}{\mathfrak S}
\newcommand{\Ufrak}{\mathfrak U}
\newcommand{\sbm}[1]{\left[\begin{smallmatrix}#1\end{smallmatrix}\right]}
\newcommand{\bbm}[1]{\begin{bmatrix}#1\end{bmatrix}}
\newcommand{\re}[1]{\mathrm{Re}\,#1}
\newcommand{\cspn}{\overline{\rm span}}
\newcommand{\cH}{{\mathcal H}}
\newcommand{\cL}{{\mathcal L}}
\newcommand{\cU}{{\mathcal U}}
\newcommand{\cX}{{\mathcal X}}
\newcommand{\cY}{{\mathcal Y}}
\newcommand{\dA}{\mathbf A}
\newcommand{\dB}{\mathbf B}
\newcommand{\dC}{\mathbf C}
\newcommand{\dD}{\mathbf D}
\newcommand{\res}[1]{\mathrm{res}\left(#1\right)}
\def\etv{& \hskip-.3em\vrule\hskip-.3em &} 
\def\smalletv{&\vrule&} 
\def\smallcrh{\vrule height0pt depth2\ex@ width0pt
\cr\noalign{\hrule}
\vrule height6.5\ex@ depth0pt width0pt}
\newbox\smallstrutbox
\def\smallstrut{\relax\ifmmode\copy\smallstrutbox\else\unhcopy\smallstrutbox\fi}
\newenvironment{sysmatrix}{
\let\|=\etv
\hskip \arraycolsep
\begin{matrix}}
{\end{matrix}
\hskip \arraycolsep
}       
\newenvironment{smallsysmatrix}{\null\,\vcenter\bgroup
\let\|=\smalletv

\def\\{\smallstrut\math@cr}
\restore@math@cr\default@tag
\baselineskip\z@skip \lineskip\z@skip \lineskiplimit\lineskip
\ialign\bgroup\hfil$\m@th\scriptstyle##$\hfil&&\thickspace\hfil
$\m@th\scriptstyle##$\hfil\crcr
\crcr\noalign{\vskip -.3\ex@}%
}{\crcr\noalign{\vskip -.2\ex@}%
\crcr\egroup\egroup\,%
}
\begin{document}

%
%
%
%
%
%
%
%
%

\title[A conservative de Branges-Rovnyak functional model on $\mathbb C^+$]{A conservative de Branges-Rovnyak func- \linebreak tional model for operator Schur functions on $\mathbb C^+$}

\author{Joseph A.\ Ball}

\address{%
Department of Mathematics\\
Virginia Tech\\
Blacksburg, VA 24061\\
USA}

\email{joball@math.vt.edu}

\author{Mikael Kurula}
\address{\AA bo Akademi Mathematics\br
Domkyrkotorget 1\br
FIN-20500 \AA bo\br
Finland}
\email{mkurula@abo.fi}


\author{Olof J.\ Staffans}
\address{\AA bo Akademi Mathematics\br
Domkyrkotorget 1\br
FIN-20500 \AA bo\br
Finland}
\email{staffans@abo.fi}

\subjclass{93B15, 47A48, 47B32}

\keywords{Schur function; continuous time; linear system; right half-plane; functional model; de Branges-Rovnyak; realization; reproducing kernel}

\begin{abstract}
We present a solution of the operator-valued Schur-function realization problem on the right-half plane by developing the corresponding de Branges-Rovnyak canonical conservative simple functional model. This model corresponds to the closely connected unitary model in the disk setting, but we work the theory out directly in the right-half plane, which allows us to exhibit structure which is absent in the disk case. A main feature of the study is that the connecting operator is unbounded, and so we need to make use of the theory of well-posed continuous-time systems. In order to strengthen the classical uniqueness result (which states uniqueness up to unitary similarity), we introduce non-invertible intertwinements of system nodes.
\end{abstract}

\maketitle
\tableofcontents


\section{Introduction}

The classical unitary realization result of de Branges and Rovnyak for Schur functions on the complex unit disk is the following: Let $\Uscr$ and $\Yscr$ be separable Hilbert spaces and let $\phi$ be an operator Schur function on $\D$, i.e., $\phi$ is analytic with $\phi(z)\in\Lscr(\Uscr;\Yscr)$ a contraction for all $z\in\D$. Then the following kernel function on $\D\times\D$, whose values are bounded linear operators on $\sbm{\Yscr\\\Uscr}$, is positive: 
\begin{equation}\label{eq:deBRkern}
 \mathbf K_s(z,w) :=
  \bbm{\displaystyle \frac{1-\phi(z)\,\phi(w)^*}{1-z\overline{w}}
    & \displaystyle\frac{\phi(z)-\phi(\overline w)}{z-\overline{w}} \\
   \displaystyle\frac{\phi(\overline z)^*-\phi(w)^*}{z-\overline{w}}
    & \displaystyle\frac{1-\phi(\overline z)^*\,\phi(\overline w)}{1-z\overline{w}}},
    \qquad z,w\in\D.
\end{equation}
Denoting its reproducing kernel Hilbert space (RKHS) by $\mathbf H_s$, we obtain that the following operator $\sbm{\mathbf A_s&\mathbf B_s \\ \mathbf C_s&\mathbf Ds}:\sbm{\mathbf H_s\\\Uscr}\to\sbm{\mathbf H_s\\\Yscr}$ is unitary:
\begin{equation}\label{eq:deBRunitary}
\begin{aligned}
  \mathbf A_s \bbm{f\\g} &= z\mapsto\bbm{\displaystyle\frac{f(z) - f(0)}{z} \\ 
    z \,g(z) - \phi(\overline{z})^{*}\, f(0)}, \\
  \mathbf B_s\,u &= z\mapsto\bbm{\displaystyle\frac{\phi(z) - \phi(0)}{z} \\ 
    \big(1 - \phi(\overline{z})^{*} \phi(0)\big)} u,\qquad z\in\D,\\
  \mathbf C_s \bbm{f\\g} &= f(0),\qquad
   \mathbf D_s \, u = \phi(0)\,u.
\end{aligned}
\end{equation}
This is the classical de Branges-Rovnyak closely connected unitary functional model for $\phi$. Indeed, it is unitary and closely connected (the disk version of the concept of \emph{simplicity} defined in Def.\ \ref{def:simple} below), its transfer function $G_s(z)=z\mathbf C_s(1-z\mathbf A_s)^{-1}\mathbf B_s+\mathbf D_s$ coincides with $\phi$ on $\D$, and conversely, every closely connected unitary realization of $\phi$ is obtained from \eqref{eq:deBRunitary} by means of a unitary change of state variable.

In this paper, we develop a version of this result in the right-half-plane setting, which requires that we use well-posed systems theory in continuous time. In particular, the analogue of $\sbm{\mathbf A_s&\mathbf B_s \\ \mathbf C_s&\mathbf D_s}$ in \eqref{eq:deBRunitary} is unbounded. Our main results can be summarized in the following simplified form which is completely analogous to the above disk case if one restricts to $\mu_*=\mu$ and $\lambda_*=\lambda$:

\begin{thm}\label{thm:preliminary}
Let $\varphi$ be an operator Schur function on the complex right-half plane $\cplus$. Then the kernel function
\begin{equation}\label{eq:conskernelexp}
  K_s(\mu,\mu_*,\lambda,\lambda_*):=\bbm{\displaystyle \frac{1-\varphi(\mu)\,\varphi(\lambda)^*}{\mu+\overline{\lambda}}
    & \displaystyle \frac{\varphi(\overline{\lambda_*})-\varphi(\mu)}{\mu-\overline{\lambda_*}} \\
  \displaystyle \frac{\varphi(\lambda)^*-\varphi(\overline{\mu_*})^*}{\mu_*-\overline{\lambda}}
    &\displaystyle \frac{1-\varphi(\overline{\mu_*})^*\,\varphi(\overline{\lambda_*})}{\mu_*+\overline{\lambda_*}}},
\end{equation}
$\mu,\mu_*,\lambda,\lambda_*\in\cplus$, is positive; denote its associated RKHS by $\Hscr_s$. 

The following unbounded linear operator from $\sbm{\Hscr_s\\\Uscr}$ into $\sbm{\Hscr_s\\\Yscr}$ is a simple, conservative system node (definitions in \S I.3):
 \begin{align}\notag
  \SysNode_s  &: \bbm{\bbm{x_1\\x_2}\\u}\mapsto\bbm{\bbm{z_1\\z_2}\\y},\quad\text{where} \\
  y  &:= \lim_{\re\eta\to+\infty} \big(\eta\, x_1(\eta)+\varphi(\eta)\,u\big),\quad\text{and} \label{defy'} \\
  \bbm{z_1(\mu)\\z_2(\mu_*)} &= \bbm{\mu \,x_1(\mu)\\-\mu_*\, x_2(\mu_*)}+\bbm{\varphi(\mu)\\1}u-\bbm{1\\\varphi(\overline{\mu_*})^*}y,\quad \mu,\mu_*\in\cplus,\label{defz'}
\end{align}
with domain consisting of all $\sbm{x\\u}$ for which this makes sense:
\begin{equation}\label{eq:domdef}
\begin{aligned}
  \dom{ \SysNode_s }  =& \left\{  \bbm{\bbm{x_1\\x_2}\\u}\in\bbm{\Hscr_s\\\Uscr}\bigmid \right. \\
  &\qquad\left.\text{\eqref{defy'} exists in}~\Yscr~ \text{and}~\text{\eqref{defz'} lies in}~\Hscr_s\right\}.
\end{aligned}
\end{equation}
The transfer function of $\SmallSysNode_s$ is $\varphi$, and conversely every simple, conservative realization of $\varphi$ is unitarily similar to $\SmallSysNode_s$; see Def.\ \ref{def:intertw} below.
\end{thm}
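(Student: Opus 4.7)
The plan is to translate the known disk-setting result of de Branges and Rovnyak into the right-half-plane setting via the Cayley transform, while simultaneously verifying intrinsically that \eqref{defy'}--\eqref{eq:domdef} define a bona fide conservative system node in the well-posed continuous-time sense. Positivity of the kernel $K_s$ comes first: pick a Cayley transform $\gamma:\D\to\cplus$, say $\gamma(z)=(1-z)/(1+z)$, and set $\phi(z):=\varphi(\gamma(z))$, which is an operator Schur function on $\D$. A direct computation shows that the block entries of \eqref{eq:conskernelexp}, evaluated at $(\gamma(z),\gamma(z_*),\gamma(w),\gamma(w_*))$, transform into the corresponding entries of $\mathbf K_s(z,w)$ in \eqref{eq:deBRkern} on the diagonals $z=z_*$ and $w=w_*$, up to positive scalar multipliers involving $(1+z)(1+\overline w)$. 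Positivity of $\mathbf K_s$ therefore yields positivity of $K_s$ and also identifies $\Hscr_s$ unitarily with the disk model space $\mathbf H_s$.

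Next I would show that $\SmallSysNode_s$ is a conservative system node with transfer function $\varphi$. Because the principal part of $\SysNode_s$ acts as multiplication by $\mu$ on $x_1$ and by $-\mu_*$ on $x_2$, its restriction to the subspace where the output vanishes generates a contraction semigroup on $\Hscr_s$, and the closedness demanded of a system node is built into the domain description \eqref{eq:domdef}. Conservativity amounts to the pair of energy identities
\[
  2\re\Ipd{\bbm{z_1\\z_2}}{\bbm{x_1\\x_2}}_{\Hscr_s}=\norm{u}_{\Uscr}^2-\norm{y}_{\Yscr}^2
\]
together with its dual, which I would verify by a reproducing-kernel calculation on elements of the form $K_s(\cdot,\cdot,\lambda,\lambda_*)\sbm{e_1\\e_2}$; on such elements the block structure of $K_s$ cancels precisely the $\varphi$- and $\varphi^*$-terms coming from \eqref{defz'} and \eqref{defy'}. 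The transfer-function identity then follows from the same calculation, essentially by inserting $x_1(\mu)=(\varphi(\mu)-\varphi(\lambda))/(\mu-\lambda)\,u$ into \eqref{defz'} with $z_1=z_2=0$ and reading off $y$ from \eqref{defy'}.

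Simplicity follows because the kernel sections $K_s(\cdot,\cdot,\lambda,\lambda_*)e$ span $\Hscr_s$ by definition of an RKHS and are, respectively, reachable from the input side and observable from the output side of $\SysNode_s$. For uniqueness, given any other simple conservative realization of $\varphi$, the classical de Branges-Rovnyak argument yields an intertwinement from $\Hscr_s$ into the other state space by sending each kernel section to its behavioural counterpart; conservativity together with the shared transfer function forces this map to be isometric on a dense set, hence to extend to a unitary intertwinement in the sense of Def.~\ref{def:intertw}.

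The main obstacle is the bookkeeping caused by the unboundedness of $\SysNode_s$ and the two-component structure of $\Hscr_s$, which has no counterpart in the disk model: one must check that the limit defining $y$ in \eqref{defy'} really does exist on the domain \eqref{eq:domdef}, and that the off-diagonal blocks of $K_s$, which couple the $\mu$- and $\mu_*$-components, are precisely what is needed to close the conservativity identity. This coupling is the new structural feature the paper advertises, and matching it up with \eqref{defz'} and \eqref{defy'} is the technical heart of the proof.
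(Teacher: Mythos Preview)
Your strategy—reduce to the disk via Cayley transform—is precisely what the paper chose \emph{not} to do; the authors work directly on $\cplus$ (see the Introduction) in order to exhibit the two-component structure of $\Hscr_s$ that has no disk analogue. A Cayley-based proof can be made to work, but your sketch has two real gaps.

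\textbf{Positivity of $K_s$.} The disk kernel $\mathbf K_s$ in \eqref{eq:deBRkern} is a function of two variables $z,w\in\D$, whereas $K_s$ is a kernel on $(\cplus\times\cplus)^2$: positivity must hold at arbitrary tuples $\bmu_j=(\mu_j,\mu_{*,j})$ with $\mu_j$ and $\mu_{*,j}$ independent. Your remark that the two kernels agree ``on the diagonals $z=z_*$, $w=w_*$'' only yields positivity on a one-complex-dimensional slice. The rescue is to note that the $(1,\cdot)$ blocks of $K_s$ depend only on $\mu$ and the $(2,\cdot)$ blocks only on $\mu_*$, so one can test with $2N$ disk points carrying vectors $\sbm{\gamma_j\\0}$ and $\sbm{0\\\nu_k}$ to decouple the variables; this needs to be said. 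The paper instead gives a direct Kolmogorov factorization (Lemma~\ref{lem:kernelpos}):
\[
  K_s(\bmu,\blam)=e(\mu,\mu_*)\,\Iscr\bbm{1&\Gamma\\\Gamma^*&1}\Iscr^*\,e(\lambda,\lambda_*)^*,
\]
where $\Gamma:\Hscr_c\to\Hscr_o$ is the past/future map of $\varphi$; positivity follows from contractivity of $\Gamma$.

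\textbf{Conservativity and the system-node property.} Verifying the energy identity on kernel sections shows only that the operator is scattering-\emph{isometric}; it does not show it is a system node, for which you need maximality (equivalently, dense range of $\sbm{\sbm{1\ 0}-\sbm{A_s\&B_s}\\ \sbm{0\ \sqrt2}}$ and of $\sbm{\sbm{1\ 0}+\sbm{A_s\&B_s}\\ \sbm{C_s\&D_s}}$). The paper does this in two steps: Prop.~\ref{prop:consreal} defines $\SmallSysNode_s$ on kernel elements and proves scattering-isometry (your ``reproducing-kernel calculation''), and then Thm.~\ref{thm:basic} checks the dense-range conditions directly against kernel functions to conclude conservativity. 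Only \emph{after} this is the explicit formula \eqref{defy'}--\eqref{eq:domdef} derived (Thm.~\ref{thm:consexplicit}), by duality from the adjoint action on kernel elements. Your order is reversed, and you have not said how you pass from ``energy identity on a dense set'' to ``maximal scattering-dissipative'', which is exactly where the unboundedness bites.

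Your uniqueness argument is close in spirit to the paper's (Thm.~\ref{thm:Intertw}), which packages the intertwining map as the bilateral input map $\Bfrak_{bil}$.
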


This paper is a direct continuation of \cite{BKSZ15} published earlier in this journal. We refer to that paper as ``Part I'' and assume that the reader is familiar with it. In Part I, the research is placed in its context and detailed background on passive system nodes is presented. Results from Part I will be referenced using a capital `I'; e.g. Thm.\ I.5.1.3 refers to item 3 of \cite[Thm.\ 5.1]{BKSZ15}. In a certain sense, the conservative model is a coupling of the two semi-conservative models in Part I, but working with the conservative model is easier than working with those in Part I. Indeed, the conservative model has the same structure as its adjoint, and hence it combines all the good properties of the semi-conservative realizations.

Investigations closely related to that reported here have been undertaken before, starting from the work \cite{deBrRo66,deBrRoBook} of de Branges and Rovnyak; see \cite{Brod78} for a nice historic overview of work on the disk case up to that point. For a good monograph on the disk case, see \cite{ADRSBook}. The first results in the right-half-plane setting are in \cite{ArNu96}; here Arov and Nudelman used a linear fractional transformation to reduce the half-plane case to the disk case. Most of the more recent publications on  half-plane functional models also employ this so-called Cayley transformation, but in the present paper we work the details out directly in the half-plane setting, in order to expose detail that is invisible in the disk setting.

Adamjan and Arov \cite{AdAr66b} showed how to embed the Sz.-Nagy-Foia\c s model into a suitably more general version of the Lax-Phillips scattering picture (for the discrete-time setting); much later 
Nikolski-Vasyunin \cite{NiVa86,NiVa89,NiVa98} refined this analysis by doing 
such an embedding also for a Pavlov model and a suitably modified 
version of the de Branges-Rovnyak model.  Continuous-time versions of this analysis are also of interest, and we plan to investigate this in a forthcoming publication.

In \cite{BaSt06} an (implicit) ``lurking isometry'' argument and Cayley transformations were used to obtain the existence of a conservative realization for any operator Schur function on the disk or the right-half plane. The realization that we describe in the present paper is a more explicit alternative to the realization constructed in \cite{BaSt06}. The results of \cite{BaSt06} have been extended to a multi-variable case in \cite{BaKV15}, and we expect also the present results to have natural extensions to various multi-variable settings. 

The continuous-time conservative realization has been studied in the state/signal framework developed by Arov and Staffans, too, in \cite{ArKuStCanon}. Here the central idea is to consider in $H^2(\cplus;\Wscr)$ the graph of the Toeplitz operator $T_\varphi$ with symbol $\varphi$, where $\Wscr$ is a \krein\ space, without assuming any particular partition $\Wscr=\Uscr\dotplus\Yscr$ into an input space $\Uscr$ and output space $\Yscr$. In this setting, the action of the realization is a pure shift on the appropriate state space and projections onto input and output components are avoided, which leads to cleaner formulas and intuitively more transparent results; see \cite{ArKuStCanon,ArSt2Canon,ArStConserv} for details. Again, in the present work the objective is to obtain as explicit formulas as possible for the input/state/output setting.

Finally, we mention that a closely related realization of a Nevanlinna family (corresponding to an impedance-passive setting rather than to the present scattering-passive setting) in terms of a boundary relation has been worked out in \cite{BeHaSn08} (or see \cite{BeHaSn09} for a more elaborate version).

The paper is laid out as follows: In \S\ref{sec:sysnode} we briefly present some additional background on conservative and simple system nodes that is needed in the present paper, with the auxiliary proofs on non-invertible intertwinements postponed to Appendix \ref{app:intertwinement}. The conservative model is introduced in \S\ref{sec:consmodel} after its state space has been constructed in \S\ref{sec:statespace}. In \S\ref{sec:extrapolation}, we present an explicit identification of the extrapolation space and calculate the (unbounded) control operator of the conservative model. The paper is concluded in \S\ref{sec:recovering}, where we exhibit the relationship with the classical de Branges-Rovnyak model \eqref{eq:deBRkern}--\eqref{eq:deBRunitary}.

\section{More on passive system nodes}\label{sec:sysnode}

We sharpen the uniqueness results in Part I and also recall a few additional concepts from continuous-time systems theory that are needed in the present paper. The discussion that follows uses the definitions of a passive system node, its main operator $A$, control operator $B$, observation operator $C$, and transfer function given in \S I.3.
 
We recall from Def.\ I.3.6 that $\SmallSysNode:\sbm{\Xscr\\\Uscr}\supset\dom{\SmallSysNode}\to\sbm{\Xscr\\\Yscr}$ is called \emph{scattering dissipative} if for all $\sbm{x\\u}\in\dom{\SmallSysNode}$:
$$
 2\re\Ipdp{z}{x}_\Xscr \leq 
  \Ipdp{u}{u}_\Uscr - \Ipdp{y}{y}_\Yscr,\qquad \bbm{z\\y}=\SysNode\bbm{x\\u}.
$$
By Def.\ I.3.7 every passive system node is scattering dissipative. By the following rather obvious consequence of \cite[Thm.\ 2.5]{StafMaxDiss}, every passive system node is even \emph{maximal} scattering dissipative:

\begin{lem}\label{lem:passmaxdiss}
A passive system node has no scattering-dissipative proper extension.
\end{lem}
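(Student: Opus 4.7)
The plan is to reduce the claim to the elementary fact that a bounded contraction defined on an entire Hilbert space admits no proper contractive extension on that same space, via the (diagonal) Cayley transform that is the workhorse of \cite{StafMaxDiss}. Concretely, for a fixed $\alpha\in\cplus$ one associates to any system node $\SmallSysNode:\sbm{\Xscr\\\Uscr}\supset\dom{\SmallSysNode}\to\sbm{\Xscr\\\Yscr}$ an operator $T_\alpha$ from (a subspace of) $\sbm{\Xscr\\\Uscr}$ into $\sbm{\Xscr\\\Yscr}$ built from $(\alpha-A\vert_{X})^{-1}$ together with the appropriate combinations involving $B$, $C$, and the transfer function value at $\alpha$. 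The standard calculation recorded in \cite[Thm.\ 2.5]{StafMaxDiss} shows two things simultaneously: first, scattering-dissipativity of $\SmallSysNode$ translates into $\|T_\alpha\sbm{x\\u}\|\le\|\sbm{x\\u}\|$ on $\dom{T_\alpha}$; second, passivity of $\SmallSysNode$ is equivalent to $T_\alpha$ being everywhere defined (and hence a contraction on all of $\sbm{\Xscr\\\Uscr}$).

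Assume now that $\SmallSysNode$ is passive and that $\SmallSysNode'$ is a scattering-dissipative extension of $\SmallSysNode$. The association $\SmallSysNode\mapsto T_\alpha$ is manifestly functorial with respect to extensions: the transform $T'_\alpha$ of $\SmallSysNode'$ satisfies $T_\alpha\subset T'_\alpha$ and is still a contraction on its domain. But $T_\alpha$ is already a contraction defined on all of $\sbm{\Xscr\\\Uscr}$, so the only contractive extension of $T_\alpha$ on this space is $T_\alpha$ itself. Therefore $T'_\alpha=T_\alpha$, and inverting the Cayley transform (which is a bijection between system nodes and operators of the relevant algebraic form) yields $\SmallSysNode'=\SmallSysNode$.

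The main (and essentially only) obstacle is bookkeeping: making sure that the Cayley-transform construction is compatible with passing to extensions, i.e., that if $\SmallSysNode\subset\SmallSysNode'$ then $T_\alpha\subset T'_\alpha$ and conversely that $T_\alpha\subset T'_\alpha$ forces the corresponding system nodes to be in the same extension relation. Both directions are immediate from the explicit formula for $T_\alpha$, which uses $\alpha$ in the resolvent set of $A$ and only algebraic combinations of the entries of $\SmallSysNode$ applied to $\sbm{x\\u}\in\dom{\SmallSysNode}$. Once this point is recorded, the proof collapses to the one-line contraction argument above.
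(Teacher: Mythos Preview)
Your approach is the same as the paper's, which simply cites \cite[Thm.\ 2.5]{StafMaxDiss}; you are essentially sketching the argument behind that citation. The idea is correct.

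One point deserves tightening. You describe $T_\alpha$ as ``built from $(\alpha-A)^{-1}$ together with the appropriate combinations involving $B$, $C$, and the transfer function value at $\alpha$'', and later invert ``the Cayley transform (which is a bijection between system nodes and operators of the relevant algebraic form)''. But the extension $\SmallSysNode'$ is only assumed to be a scattering-dissipative \emph{operator}, not a system node: it need not have a main operator with $\alpha$ in its resolvent set, so your resolvent-based formula for $T'_\alpha$ is not available. The remedy is already implicit in \cite{StafMaxDiss}: define the Cayley transform at the \emph{graph} level, as a fixed linear bijection of $\sbm{\Xscr\\\Uscr}\oplus\sbm{\Xscr\\\Yscr}$ onto itself that carries the graph of $\SmallSysNode$ to the graph of $T_\alpha$. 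In that formulation the functoriality $\SmallSysNode\subset\SmallSysNode'\Rightarrow T_\alpha\subset T'_\alpha$ is automatic, scattering-dissipativity translates to contractivity of the (possibly multi-valued) image relation, and contractivity forces $T'_\alpha$ to be single-valued. Since $T_\alpha$ already has full domain $\sbm{\Xscr\\\Uscr}$, one gets $T'_\alpha=T_\alpha$ and hence, by bijectivity of the graph map, $\SmallSysNode'=\SmallSysNode$. With this adjustment your sketch is complete.
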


Next, we define the concept of simplicity of a continuous-time system. In the following definition, $A_{-1}$ denotes the unique extension of $A$ to a closed operator on the extrapolation space induced by $A$, and $A^d_{-1}$ denotes the analogous extension of $A^*$ to its extrapolation space -- this latter operator was denoted by $A^*\big|_{\Xscr}$ in Part I.

\begin{defn}\label{def:simple}
A passive system node $\SmallSysNode$ with state space $\Xscr$ is \emph{simple} if
\begin{equation}\label{eq:simpledef}
\begin{aligned}
  &\cspn\{(\lambda-A^d_{-1})^{-1}C^*\gamma+({\lambda_*}-A_{-1})^{-1}B\nu\mid \\
  &\qquad\qquad\qquad\qquad\quad \lambda,\lambda_*\in\cplus,\,\gamma\in\Yscr,\,\nu\in\Uscr\}=\Xscr.
\end{aligned}
\end{equation}
\end{defn}

Comparing simplicity to the notions of controllability and observability in \S I.3, one observes that every controllable and every observable passive system is simple; take either $\gamma=0$ or $\nu=0$ in \eqref{eq:simpledef}.

The equation (I.3.6), which is valid for every system node, plays an important role in the theory of de Branges-Rovnyak models on $\cplus$, e.g., in the proof of Thm.\ I.4.3 (or its further development Thm.\ \ref{thm:sharperint} below). For \emph{conservative} systems, we have the additional equality \eqref{eq:twist} below:

\begin{lem}\label{lem:consdual}
Let $\SmallSysNode$ be a conservative system node. The (in general unbounded) adjoint of $\SmallSysNode$ is the system node
\begin{equation}\label{eq:adjoint}
\begin{aligned}
  \SysNode^* &= \bbm{-\bbm{A \& B }\\ \bbm{0 & 1}}\bbm{\bbm{1&0}\\\bbm{C\&D}}^{-1}, \\
  \dom{\SysNode^*} &= \bbm{\bbm{1&0}\\\bbm{C\&D}}\dom{\SysNode}.
\end{aligned}
\end{equation}
For every $\lambda\in\cplus$ and $\gamma\in y$, 
\begin{equation}\label{eq:twist}
\begin{aligned}
  \bbm{(\overline\lambda-A^d_{-1})^{-1}C^*\gamma\\\varphi(\lambda)^*\gamma} &\in \dom{\SysNode}\qquad \text{and} \\
  \SysNode\bbm{(\overline\lambda-A^d_{-1})^{-1}C^*\gamma\\\varphi(\lambda)^*\gamma} &=
    \bbm{-\overline\lambda(\overline\lambda-A^d_{-1})^{-1}C^*\gamma\\\gamma}.
\end{aligned}
\end{equation}
\end{lem}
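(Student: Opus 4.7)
My plan is to deduce both claims from the conservativity hypothesis, by first verifying that the right-hand side of \eqref{eq:adjoint} is indeed the Hilbert adjoint of $\SmallSysNode$, and then applying the standard resolvent identity (I.3.6) to the resulting adjoint system node.

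For \eqref{eq:adjoint}, denote by $\tilde\SysNode$ the operator on the right-hand side. By construction, $\dom{\tilde\SysNode}$ consists of the vectors $\sbm{x\\y}$ with $\sbm{x\\u}\in\dom\SysNode$ and $y=\bbm{C\&D}\sbm{x\\u}$, and on such a vector $\tilde\SysNode\sbm{x\\y}=\sbm{-z\\u}$ with $z=\bbm{A\&B}\sbm{x\\u}$. A direct expansion of $\Ipd{\SysNode\sbm{x\\u}}{\sbm{x\\y}}$ and of $\Ipd{\sbm{x\\u}}{\tilde\SysNode\sbm{x\\y}}$ reduces their equality to the conservativity identity $2\re\Ipd{z}{x}=\Ipd{u}{u}-\Ipd{y}{y}$, so $\tilde\SysNode\subseteq\SysNode^*$. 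Interchanging the roles of $u$ and $y$ in the same identity shows that $\tilde\SysNode$ itself is scattering-dissipative and hence a passive system node. Since $\SysNode^*$ is also scattering-dissipative (the Hilbert adjoint of a conservative operator is conservative), Lemma \ref{lem:passmaxdiss} upgrades the inclusion to equality.

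For \eqref{eq:twist}, I would next read off from \eqref{eq:adjoint} that the semigroup generator of $\SysNode^*$ is $A^d:=A^*$, its control operator is $C^*$, and its transfer function is $\mu\mapsto\varphi(\overline\mu)^*$, as for the dual of any scattering-passive node. Applying (I.3.6) to $\SysNode^*$ at the spectral parameter $\overline\lambda\in\cplus$ with input $\gamma\in\Yscr$ then yields
\begin{equation*}
  \SysNode^*\bbm{(\overline\lambda-A^d_{-1})^{-1}C^*\gamma\\\gamma}
    =\bbm{\overline\lambda(\overline\lambda-A^d_{-1})^{-1}C^*\gamma\\\varphi(\lambda)^*\gamma}.
\end{equation*}
Interpreting this through \eqref{eq:adjoint}, the argument on the left must equal $\bbm{\bbm{1&0}\\\bbm{C\&D}}\sbm{x\\u}$ for some $\sbm{x\\u}\in\dom\SysNode$, while the value on the right equals $\bbm{-\bbm{A\&B}\sbm{x\\u}\\u}$. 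Matching components extracts $x=(\overline\lambda-A^d_{-1})^{-1}C^*\gamma$, $u=\varphi(\lambda)^*\gamma$, $\bbm{A\&B}\sbm{x\\u}=-\overline\lambda(\overline\lambda-A^d_{-1})^{-1}C^*\gamma$, and $\bbm{C\&D}\sbm{x\\u}=\gamma$, which together comprise \eqref{eq:twist}.

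The main obstacle is the identification of the generator, control operator and transfer function of $\SysNode^*$ from the explicit form \eqref{eq:adjoint}. Although these identifications are standard for the dual of a scattering-passive node, one must verify carefully that the minus sign on $\bbm{A\&B}$ in \eqref{eq:adjoint} does not also propagate to the generator: that sign reflects the time-reversal aspect of the adjoint on the full node, while the generator of $\SysNode^*$ is independently determined as $A^*$. This can be settled either by a direct computation on the null-output vectors $\sbm{x\\0}\in\dom{\SysNode^*}$, or by appeal to the corresponding facts in Part I.
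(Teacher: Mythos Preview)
Your treatment of \eqref{eq:twist} matches the paper's: both apply (I.3.6) to the dual node $\SysNode^*$ at the point $\overline\lambda$ and then invoke the time-flow-inverse reformulation \eqref{eq:tfi} (which is exactly your ``interpreting through \eqref{eq:adjoint}'') to convert the resulting identity for $\SysNode^*$ into the claimed identity for $\SysNode$.

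For \eqref{eq:adjoint}, however, the paper does not argue directly: it simply cites Thm.~I.3.12, which already contains the identification of $\SysNode^*$ in the stated form. Your attempt at a self-contained proof has two gaps. First, the inclusion $\tilde\SysNode\subseteq\SysNode^*$ requires checking $\bigipd{\SysNode\sbm{x'\\u'}}{\sbm{x\\y}}=\bigipd{\sbm{x'\\u'}}{\tilde\SysNode\sbm{x\\y}}$ for \emph{arbitrary} $\sbm{x'\\u'}\in\dom\SysNode$, not just for the particular $\sbm{x\\u}$ from which $\sbm{x\\y}$ is built; as written you only test the diagonal pairing. This is repairable by polarization of the energy-preserving identity, but it is not what you wrote. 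Second, and more seriously, the implication ``$\tilde\SysNode$ is scattering-dissipative and hence a passive system node'' is a non-sequitur: scattering dissipativity is only one ingredient of passivity (Def.~I.3.7), and the remaining system-node structure (closedness, the correct domain structure, that the main operator generates a $C_0$-semigroup, etc.) must be established separately before Lemma~\ref{lem:passmaxdiss} can be applied. That missing structure is precisely what Thm.~I.3.12 supplies, so the cleanest fix is to cite it, as the paper does.
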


Sometimes we write \eqref{eq:adjoint} in the ``time-flow inverse'' form 
\begin{equation}\label{eq:tfi}
  \bbm{z\\y}=\SysNode\bbm{x\\u} \quad\Longleftrightarrow\quad \SysNode^*\bbm{x\\y}=\bbm{-z\\u}.
\end{equation}

\begin{proof}[Proof of lemma \ref{lem:consdual}]
The equality \eqref{eq:adjoint} holds by Thm.\ I.3.12. We have
$$
  \bbm{\overline\lambda(\overline\lambda-A^d_{-1})^{-1}C^*\gamma\\\varphi(\lambda)^*\gamma}=\SysNode^*
    \bbm{(\overline\lambda-A^d_{-1})^{-1}C^*\gamma\\\gamma},
$$
and combining this with \eqref{eq:tfi}, we obtain \eqref{eq:twist}.
\end{proof}

\subsection{The passive input, output and past/future maps}

The exposition and terminology of this section loosely follow \cite[\S\S 5\,--\,6]{ArSt2Canon}. 

\begin{thm}\label{thm:iomaps}
Let $\SmallSysNode$ be a passive system node and denote by $\Hscr_o$ and $\Hscr_c$ the Hilbert spaces with reproducing kernels $K_o$ and $K_c$ in (I.1.17), respectively.

The following \emph{(passive frequency-domain) output map} is a contraction from $\Xscr$ into $\Hscr_o$:
\begin{equation}\label{eq:outputaction}
  \Cfrak x:=\mu\mapsto C(\mu-A)^{-1}x,\quad \mu\in\cplus.
\end{equation}
Moreover, the mapping
\begin{equation}\label{eq:inputaction}
  \Bfrak: e_c(\overline\lambda)^*\nu\mapsto (\lambda-A_{-1})^{-1}B\nu,\qquad \lambda\in\cplus,\, \nu\in\Uscr,
\end{equation}
extends by linearity and operator closure to a contraction mapping $\Hscr_c$ into $\Xscr$.
\end{thm}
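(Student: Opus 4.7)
The plan is to handle both maps by a reproducing-kernel adjoint strategy; the two arguments are dual, so I describe the output map in detail. For $x\in\Xscr$ and $\mu\in\cplus$ the formula $(\Cfrak x)(\mu)=C(\mu-A)^{-1}x$ evidently defines an analytic $\Yscr$-valued function on $\cplus$ because $(\mu-A)^{-1}x\in\dom A\subset\dom C$. To show $\Cfrak x\in\Hscr_o$ with $\norm{\Cfrak x}_{\Hscr_o}\le\norm{x}_\Xscr$, I would identify $\Cfrak$ as the Hilbert-space adjoint of an explicitly defined contraction $\Ofrak\colon\Hscr_o\to\Xscr$. On the dense subspace $\Dscr_o:=\spn\bigset{K_o(\cdot,\mu)\gamma : \mu\in\cplus,\,\gamma\in\Yscr}$ of kernel sections, set
\[
  \Ofrak\Bigl(\sum_i K_o(\cdot,\mu_i)\gamma_i\Bigr) := \sum_i(\overline{\mu_i}-A^*)^{-1}C^*\gamma_i.
\]
The key step is a Stein-type estimate
\[
  \biggnorm{\sum_i(\overline{\mu_i}-A^*)^{-1}C^*\gamma_i}_\Xscr^2 \le \sum_{i,j}\ipdp{K_o(\mu_j,\mu_i)\gamma_i}{\gamma_j}_\Yscr,
\]
which simultaneously gives well-definedness of $\Ofrak$ on $\Dscr_o$ (its kernel contains that of the right-hand quadratic form) and contractivity. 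Extending $\Ofrak$ by continuity to a contraction $\Hscr_o\to\Xscr$, the reproducing property of $K_o$ together with the definition of $\Ofrak$ yields $\ipdp{(\Ofrak^*x)(\mu)}{\gamma}_\Yscr=\ipdp{x}{(\overline\mu-A^*)^{-1}C^*\gamma}_\Xscr=\ipdp{C(\mu-A)^{-1}x}{\gamma}_\Yscr$ for every $\mu\in\cplus$, $\gamma\in\Yscr$, so $\Ofrak^*=\Cfrak$ and $\Cfrak$ is a contraction $\Xscr\to\Hscr_o$.

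For the input map I would argue directly on the dense span $\Dscr_c:=\spn\bigset{e_c(\overline\lambda)^*\nu : \lambda\in\cplus,\,\nu\in\Uscr}$ of $\Hscr_c$. The dual Stein-type inequality
\[
  \biggnorm{\sum_i(\lambda_i-A_{-1})^{-1}B\nu_i}_\Xscr^2 \le \sum_{i,j}\ipdp{K_c(\overline{\lambda_j},\overline{\lambda_i})\nu_i}{\nu_j}_\Uscr,
\]
whose right-hand side equals $\norm{\sum_i e_c(\overline{\lambda_i})^*\nu_i}_{\Hscr_c}^2$ by the reproducing property, simultaneously gives well-definedness of $\Bfrak$ on $\Dscr_c$ and contractivity; the extension to all of $\Hscr_c$ by linearity and operator closure is then automatic.

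The main obstacle is a clean derivation of the two Stein-type kernel inequalities in the presence of unbounded $B$ and unbounded $C$. The most direct approach---integrating the energy balance $2\re\Ipdp{z}{x}\le\norm{u}^2-\norm{y}^2$ along classical trajectories and Laplace-transforming---requires producing sufficiently smooth input/state/output triples, which is awkward in the $L^2$-well-posed continuous-time setting. I would instead work entirely in the frequency domain: starting from the basic system-node identity (I.3.6) together with the resolvent identity, one rewrites $I-\varphi(\mu)\varphi(\lambda)^*$ and $I-\varphi(\overline\mu)^*\varphi(\overline\lambda)$ in terms of the resolvents of $A$ and $A^*$ evaluated on the vectors $(\overline\mu-A^*)^{-1}C^*\gamma$ and $(\lambda-A_{-1})^{-1}B\nu$, and then applies the scattering-dissipativity inequality of Def.\ I.3.6 pointwise in $(\mu,\lambda)\in\cplus\times\cplus$. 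This is exactly the bookkeeping already used in Thm.\ I.4.3, and it keeps the entire argument inside the resolvent calculus so that no smooth trajectory is ever needed.
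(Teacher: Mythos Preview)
Your overall architecture---define the candidate adjoint on kernel functions, prove a Stein-type inequality, and take closures---is sound and would work if the kernel inequality were available. The gap is precisely there. Applying scattering dissipativity (of the dual node) once to the combined vector $\sum_i x_i$ with $x_i=(\overline{\mu_i}-A^d_{-1})^{-1}C^*\gamma_i$ and input $\sum_i\gamma_i$ yields
\[
  \sum_{i,j}(\mu_j+\overline{\mu_i})\,\ipdp{x_i}{x_j}_\Xscr
  \;\le\;
  \sum_{i,j}\bigipdp{\bigl(1-\varphi(\mu_j)\varphi(\mu_i)^*\bigr)\gamma_i}{\gamma_j}_\Yscr,
\]
whereas the inequality you need is
\[
  \sum_{i,j}\ipdp{x_i}{x_j}_\Xscr
  \;\le\;
  \sum_{i,j}\bigipdp{K_o(\mu_j,\mu_i)\gamma_i}{\gamma_j}_\Yscr
  \;=\;
  \sum_{i,j}\frac{\bigipdp{\bigl(1-\varphi(\mu_j)\varphi(\mu_i)^*\bigr)\gamma_i}{\gamma_j}_\Yscr}{\mu_j+\overline{\mu_i}}.
\]
Passing from the first to the second would amount to Schur-dividing a sign-indefinite kernel by the Szeg\H{o} kernel $(\mu_j+\overline{\mu_i})$, and that is not a legitimate operation on kernel inequalities. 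Your appeal to ``the bookkeeping already used in Thm.\ I.4.3'' does not help: that theorem treats the \emph{energy-preserving} case, where the displayed relation is an equality and no division is needed.

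The paper's proof is designed exactly to bridge this gap. Integrating the energy balance along classical trajectories produces $\|\Cfrak x_0+T_\varphi\widehat u\|_{H^2}^2\le\|x_0\|_\Xscr^2+\|\widehat u\|_{H^2}^2$, hence the operator inequality $\Cfrak\Cfrak^{\diamond}\le 1-T_\varphi T_\varphi^*$ on $H^2(\cplus;\Yscr)$; Douglas' lemma together with the range-space description of $\Hscr_o$ then gives $\|\Cfrak x\|_{\Hscr_o}\le\|x\|_\Xscr$. The time integral is what replaces a single application of dissipativity by its accumulation over $t\in\rplus$, and via Plancherel this is precisely the mechanism that ``divides out'' the factor $(\mu+\overline\lambda)$. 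The input map is then obtained by duality, applying the output-map result to $\SmallSysNode^*$; your direct argument for $\Bfrak$ runs into the same obstruction.
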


The theorem can be seen as a consequence of \cite[Thm.\ 11.1.6]{StafBook}, but the connection requires some explanations, and so we include a proof formulated in the present setup for reading convenience.  At the end of the proof we need the following notation which is familiar from Part I:
$$
	\widetilde\varphi(\mu):=\varphi(\overline\mu)^*,\qquad\mu\in\cplus.
$$
Also, we introduce the notation $T_\varphi$ for the usual Toeplitz operator $T_\varphi$ with symbol $\varphi\in L^\infty(i\R;\Uscr,\Yscr)$:
$$
	T_\varphi:=P_+M_\varphi\big|_{H^2(\cplus;\Uscr)}:H^2(\cplus;\Uscr)\to H^2(\cplus;\Yscr),
$$
where $M_\varphi:L^2(i\R;\Uscr)\to L^2(i\R;\Yscr)$ is the operator multiplying by $\varphi$ and $P_+$ is the orthogonal projection of $L^2(i\R;\Yscr)$ onto $H^2(\cplus;\Yscr)$. In our case, we always have $\varphi\in\Sscr(\cplus;\Uscr,\Yscr)\subset H^\infty(\cplus;\Uscr,\Yscr)$, so that $T_\varphi u=M_\varphi u$ for all $u\in H^2(\cplus;\Uscr,\Yscr)$; indeed, in Part I, we used the somewhat less precise notation $M_\varphi$ rather than $T_\varphi$.

\begin{proof}[Proof of Thm. \ref{thm:iomaps}]
See \cite[\S11.1]{StafBook} for background and more details on this proof.

Let $(u,x,y)$ be a stable classical trajectory of the system node $\SmallSysNode$, i.e., $u\in L^2(\rplus;\Uscr)\cap C(\rplus;\Uscr)$, $x\in C^1(\cplus;\Xscr)$, $y\in C(\rplus;\Yscr)$, and
$$
  \bbm{\dot x(t)\\y(t)}=\SysNode\bbm{x(t)\\u(t)},\quad t\geq0.
$$
By Defs.\ I.3.6--7, we then have for all $t\geq0$ that
$$
  \ddt\|x(t)\|^2=\Ipdp{\dot x(t)}{x(t)}_\Xscr + \Ipdp{x(t)}{\dot x(t)}_\Xscr \leq 
  \Ipdp{u(t)}{u(t)}_\Uscr - \Ipdp{y(t)}{y(t)}_\Yscr,
$$
and integrating this from $0$ to $T\geq0$, we obtain
$$
  -\|x(0)\|^2\leq \|x(T)\|^2-\|x(0)\|^2\leq \int_0^T\|u(t)\|^2\ud t-\int_0^T\|y(t)\|^2\ud t.
$$
Letting $T\to+\infty$, we obtain that $y\in L^2(\rplus;\Yscr)$ and $\|x(0)\|^2\geq \|y\|_{L^2(\rplus;\Yscr)}^2-\|u\|_{L^2(\rplus;\Uscr)}^2$; moreover
$$
\begin{aligned}
	\|x(T)\|^2 &\leq \|x(0)\|^2+\int_0^T\|u(t)\|^2\ud t-\int_0^T\|y(t)\|^2\ud t \\
	&\leq \|x(0)\|^2+\|u(t)\|^2_{L^2(\rplus;\Uscr)},
\end{aligned}
$$
and so $\|x(T)\|^2$ is bounded. Thus we may take Laplace transforms, obtaining that $\|x(0)\|^2\geq \|\widehat y\|_{H^2(\cplus;\Yscr)}^2-\|\widehat u\|_{H^2(\cplus;\Uscr)}^2$ and
$$ 
  \bbm{\widehat x(\mu) \\ \widehat y(\mu)} = \bbm{(\mu-A)^{-1}&(\mu-A_{-1})^{-1}B \\ C(\mu-A)^{-1}&\varphi(\mu)}
    \bbm{x(0)\\\widehat u(\mu)},\qquad \mu\in\cplus.
$$
Hence, $\widehat y=\Cfrak x(0)+T_\varphi \widehat u$ and the operator $\bbm{\Cfrak&T_\varphi}$ is a contraction from 
\begin{equation}\label{eq:classinitset}
  \set{\bbm{x(0)\\\widehat u}\bigmid (u,x,y)~\text{classical stable trajectory of}~\SysNode}
\end{equation}
(as a subset of $\sbm{\Xscr\\H^2(\cplus;\Uscr)}$) into $H^2(\cplus;\Yscr)$. By \cite[4.6.11]{StafBook}, the set \eqref{eq:classinitset} contains $\sbm{\dom{A}\\\widehat{H^1_0(\rplus;\Uscr)}}$, where $H^1_0(\rplus;\Uscr)$ is the first order Sobolev space of $\Uscr$-valued functions $u$ with the additional restriction $u(0)=0$, and $\sbm{\dom{A}\\\widehat{H^1_0(\rplus;\Uscr)}}$ is dense in $\sbm{\Xscr\\H^2(\cplus;\Uscr)}$ because $A$ generates a contraction semigroup on $\Xscr$ and the Laplace transformation is unitary.

Hence, $\bbm{\Cfrak&T_\varphi}\sbm{\Cfrak^\diamond\\T_\varphi^*}\leq 1$ on $H^2(\cplus;\Yscr)$, where $\Cfrak^\diamond$ denotes the adjoint of $\Cfrak$ calculated with respect to the inner product in $H^2(\cplus;\Yscr)$ rather than with respect to the inner product in $\Hscr_o$ (in which case we would have written $\Cfrak^*$). Thus $\Cfrak\Cfrak^\diamond\leq 1-T_\varphi T_\varphi^*$ and by Douglas' lemma there exists some contraction $C:\Xscr\to\Ker{1-T_\varphi T_\varphi^*}^\perp$ such that $\Cfrak=(1-T_\varphi T_\varphi^*)^{1/2}\,C$. This implies that $\Cfrak$ is a contraction from $\Xscr$ into $\Hscr_o$, because for every $x\in\Xscr$:
$$
	\|\Cfrak x\|_{\Hscr_o} =
		\|(1-T_\varphi T_\varphi^*)^{1/2}\, Cx\|_{\Hscr_o}
		=\|Cx\|_{H^2(\cplus;\Yscr)}\leq \|x\|_\Xscr.
$$
Consider now the output map $\Cfrak^d$ of the passive system node $\SmallSysNode^*$. From (I.1.17) it follows that $\Hscr_o$ constructed with $\widetilde\varphi$ is the same as $\Hscr_c$ constructed with $\varphi$. Thus $\Cfrak^d=\Bfrak^*$ is a contraction from $\Xscr$ into $\Hscr_c$, and so $\Bfrak$ is a contraction from $\Hscr_c$ into $\Xscr$.
\end{proof}

From now on we let $\Bfrak$ be the extension of \eqref{eq:inputaction} by linearity and continuity and we call it the \emph{(passive frequency-domain) input map}. Please note that the input and output maps of the dual system $\SmallSysNode^*$ are $\Cfrak^d\in\Lscr(\Xscr;\Hscr_c)$ and $\Bfrak^d\in\Lscr(\Hscr_o;\Xscr)$ given by
\begin{equation}\label{eq:dualio}
\begin{aligned}
	\Bfrak^d &= \Cfrak^* :e_o(\lambda)^*\gamma\mapsto 
		(\overline\lambda-A_{-1}^d)^{-1} C^*\gamma \qquad\text{and}\\
	\Cfrak^d &= \Bfrak^*:x\mapsto \big(\mu\mapsto B^*(\mu-A^*)^{-1}x\big),
\end{aligned}
\end{equation}
respectively; see Prop.\ I.3.10. We shall later make extensive use of the \emph{unobservable subspace} $\Ufrak$ and the \emph{approximately reachable subspace} $\Rfrak$ of $\SmallSysNode$, which are given by
\begin{equation}\label{eq:UnobsReach}
	\Ufrak:=\Ker{\Cfrak}\qquad\text{and}\qquad \Rfrak:=\crange\Bfrak
\end{equation}
(closure in $\Xscr$). Also note that $\Ufrak^\perp$ is the approximately reachable subspace and $\Rfrak^\perp$ the unobservable subspace of the dual system node $\SmallSysNode^*$. Therefore, we denote $\Ufrak^\perp=:\Rfrak^\dagger$ and $\Rfrak^\perp=:\Ufrak^\dagger$.

\begin{defn}\label{def:pfmap}
For a passive system node $\SmallSysNode$, we call the contraction $\Gamma:=\Cfrak\,\Bfrak:\Hscr_c\to\Hscr_o$ the \emph{(frequency domain) past/future-map} of $\SmallSysNode$.
\end{defn}

We first give the action of the past/future map on kernel functions.

\begin{prop}\label{prop:pfmap}
 The transfer function $\varphi$ uniquely determines $\Gamma$ via 
\begin{equation}\label{eq:Gamma}
  \Gamma\, e_c(\overline{\lambda_*})^*\nu = \mu\mapsto\frac{\varphi({\lambda_*})
  	-\varphi(\mu)}{\mu-{\lambda_*}}\nu,\qquad \lambda_*,\mu\in\cplus,\,\nu\in\Uscr.
\end{equation}
The adjoint of $\Gamma$ has the following action on kernel functions:
\begin{equation}\label{eq:GammaAdj}
  \Gamma^*\,e_o(\overline\lambda)^*\gamma 
  	=\mu\mapsto \frac{\widetilde\varphi(\lambda)-\widetilde\varphi(\mu)}
  		{\mu-\lambda}\gamma,
	\quad \lambda\in\cplus,\,\gamma\in\Yscr.
\end{equation}
Denoting the past/future map of $\varphi$ by $\Gamma_\varphi:\Hscr_c\to\Hscr_o$, and similar for $\Gamma_{\widetilde\varphi}:\Hscr_o\to\Hscr_c$, we have
\begin{equation}\label{eq:GammaDual}
  \Gamma_\varphi^*=\Gamma_{\widetilde\varphi}.
\end{equation}
\end{prop}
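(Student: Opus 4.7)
The plan is to verify \eqref{eq:Gamma} by direct computation on reproducing-kernel functions using the system-node identity (I.3.6), and then to obtain \eqref{eq:GammaAdj} and \eqref{eq:GammaDual} almost for free by applying \eqref{eq:Gamma} to the dual system node $\SmallSysNode^*$. Concretely, combining \eqref{eq:inputaction} and \eqref{eq:outputaction} gives
\[
	\Gamma\,e_c(\overline{\lambda_*})^*\nu
	= \mu\mapsto C(\mu-A)^{-1}(\lambda_*-A_{-1})^{-1}B\nu,\qquad \mu\in\cplus.
\]
The system-node identity (I.3.6) yields $\varphi(\mu)-\varphi(\lambda_*)=(\lambda_*-\mu)\,C(\mu-A)^{-1}(\lambda_*-A_{-1})^{-1}B$, and solving for the resolvent product produces exactly the right-hand side of \eqref{eq:Gamma}.

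For \eqref{eq:GammaAdj}, the natural route is to apply the formula \eqref{eq:Gamma} just established to the dual system node $\SmallSysNode^*$, which is passive by Thm.\ I.3.12 with transfer function $\widetilde\varphi$, input space $\Yscr$, and output space $\Uscr$. By \eqref{eq:dualio} its past/future map equals $\Cfrak^d\Bfrak^d=\Bfrak^*\Cfrak^*=\Gamma^*$, and its observability and controllability kernels in (I.1.17) are, respectively, $K_c$ and $K_o$ of the original $\SmallSysNode$, so the dual input-evaluation functionals $e_c^{(d)}$ coincide with $e_o$ of $\SmallSysNode$. Substituting $\widetilde\varphi$ for $\varphi$ in \eqref{eq:Gamma} then delivers \eqref{eq:GammaAdj} after relabeling the dummy variables.

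Finally, \eqref{eq:GammaDual} follows immediately: both $\Gamma_\varphi^*$ and $\Gamma_{\widetilde\varphi}$ are bounded operators from $\Hscr_o$ into $\Hscr_c$, and by \eqref{eq:GammaAdj} for the former and by \eqref{eq:Gamma} applied to the Schur function $\widetilde\varphi$ for the latter they have identical action on the total set $\{e_o(\overline\lambda)^*\gamma\mid \lambda\in\cplus,\,\gamma\in\Yscr\}$. The only potential obstacle I foresee is bookkeeping around the swap $\Uscr\leftrightarrow\Yscr$ and the identification of the RKHS of $K_o$ for $\varphi$ with that of $K_c$ for $\widetilde\varphi$; no substantive estimate beyond (I.3.6) and the resolvent identity is required.
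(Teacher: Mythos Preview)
Your argument is correct. For \eqref{eq:Gamma} and \eqref{eq:GammaDual} you proceed exactly as the paper does: the paper's computation \eqref{eq:resdiff} is precisely the combination of the resolvent identity with the transfer-function definition that you describe, and both you and the paper conclude \eqref{eq:GammaDual} by comparing the two bounded operators on the total set of kernel functions.

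The one genuine difference is in the derivation of \eqref{eq:GammaAdj}. The paper computes $\Gamma^*$ directly from the reproducing-kernel property: it evaluates $\Ipdp{\Gamma\, e_c(\overline{\lambda_*})^*\nu}{e_o(\overline\lambda)^*\gamma}_{\Hscr_o}$ using \eqref{eq:Gamma} and point evaluation in $\Hscr_o$, then recognizes the result as $\Ipdp{e_c(\overline{\lambda_*})^*\nu}{(\widetilde\varphi(\lambda)-\widetilde\varphi(\cdot))(\cdot-\lambda)^{-1}\gamma}_{\Hscr_c}$, which identifies $\Gamma^* e_o(\overline\lambda)^*\gamma$ by totality. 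Your route is instead to apply \eqref{eq:Gamma} to the passive dual node $\SmallSysNode^*$ and invoke \eqref{eq:dualio} to recognize its past/future map as $\Bfrak^*\Cfrak^*=\Gamma^*$, together with the swap $K_o\leftrightarrow K_c$ under $\varphi\mapsto\widetilde\varphi$. Both arguments are short; yours is more structural (it explains \emph{why} $\Gamma^*$ has the same shape as $\Gamma$ with $\widetilde\varphi$ in place of $\varphi$, and in fact makes \eqref{eq:GammaDual} almost tautological), while the paper's inner-product calculation is entirely self-contained and avoids the bookkeeping around the $\Uscr\leftrightarrow\Yscr$ swap that you correctly flag as the only thing to watch.
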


As a consequence, all passive realizations of the same transfer function have the same past/future map. 

\begin{proof}[Proof of Prop.\ \ref{prop:pfmap}]
The equation \eqref{eq:Gamma} follows from the following computation:
\begin{equation}\label{eq:resdiff}
\begin{aligned}
	\left(\Gamma e_c(\overline{\lambda_*})^*\right)(\mu) &= C(\mu-A)^{-1}({\lambda_*}-A_{-1})^{-1}B \\
	&= \CD\bbm{(\mu-A)^{-1}({\lambda_*}-A_{-1})^{-1}B\\0} \\
    	&= \CD\frac{\bbm{({\lambda_*}-A_{-1})^{-1}B\\1}-\bbm{(\mu-A_{-1})^{-1}B\\1}}{\mu-{\lambda_*}} \\
     	&= \frac{\varphi({\lambda_*})-\varphi(\mu)}{\mu-{\lambda_*}}.
\end{aligned}
\end{equation}
Then \eqref{eq:GammaAdj} follows from (valid for all $\lambda_*,\lambda\in\cplus$, $\nu\in\Uscr$, and $\gamma\in\Yscr$):
\begin{equation}\label{eq:GammaDefCalc}
\begin{aligned}
  \Ipdp{\Gamma\, e_c(\overline{\lambda_*})^*\nu}{e_o(\overline\lambda)^*\gamma}_{\Hscr_o} 
    &= \Ipdp{\frac{\varphi({\lambda_*})-\varphi(\overline\lambda)}{\overline\lambda-{\lambda_*}}\nu}{\gamma}_{\Uscr} \\
  &= \Ipdp{e_c(\overline{\lambda_*})^*\nu}{\frac{\widetilde\varphi(\cdot)-\widetilde\varphi(\lambda)}{\lambda-\cdot}\gamma}_{\Hscr_c}.
\end{aligned}
\end{equation}
Finally, by \eqref{eq:Gamma} and \eqref{eq:GammaAdj}, $\Gamma_\varphi^* \, e_o(\overline\lambda)^*\gamma=\Gamma_{\widetilde\varphi} \, e_o(\overline\lambda)^*\gamma$ for all $\lambda\in\cplus$ and $\gamma\in\Yscr$. Considering linear combinations of kernel functions and extending by continuity, we obtain \eqref{eq:GammaDual}.
\end{proof}

Combining \eqref{eq:GammaAdj} with (I.4.50), we see that 
\begin{equation}\label{eq:tauGamma}
  \tau_{c,\alpha}=C_c(\alpha-A_c)^{-1}=e_c(\alpha)\,\Gamma,\qquad \alpha\in\cplus.
\end{equation}
This operator played a very important role in the more explicit representation of the energy-preserving controllable model $\SmallSysNode_c$ and its extrapolation space in Part I. In the present paper, the operator $\Gamma$ plays an even more crucial role, already in the proof that the reproducing kernel defining the state space of the simple conservative model is positive.

\begin{ex}\label{ex:canonio}
From \eqref{eq:tauGamma} and \eqref{eq:outputaction}, we obtain that the controllable energy-preserving model $\SmallSysNode_c$ in \S I.4 has $\Cfrak_c=\Gamma$. Moreover, comparing \eqref{eq:inputaction} with (I.4.7), it becomes evident that $\Bfrak_c=1$. Similarly, (I.5.8) implies that the observable co-energy-preserving model $\SmallSysNode_o$ has input map $\Bfrak_o=\Gamma$ and output map $\Cfrak_o=1_{\Hscr_o}$.

\end{ex}

Let $e_+(\mu)$ and $e_-(\mu_*)$ denote point-evaluation of functions in $H^2(\cplus)$, at $\mu\in\cplus$, and $H^2(\cminus)$, at $\mu_*\in\cminus$, respectively. The right-hand side of \eqref{eq:Gamma} equals the action of the Hankel operator $\mathrm H_\varphi:=P_{H^2(\cplus;\Yscr)} \,M_\varphi\big|_{H^2(\cminus;\Uscr)}$ on a kernel function of $H^2(\cminus;\Uscr)$, namely
$$
  k_-(\eta,-\overline{\lambda_*})\,\nu:=\displaystyle\frac{-\nu}{\eta+(-{\lambda_*})},\qquad\eta,\,-{\lambda_*}\in\cminus,\, \nu\in\Uscr.
$$
Indeed, for all fixed parameters $\lambda_*\in\cplus$, for all fixed vectors $\nu\in\Uscr$, and for almost all values of the variable $\mu\in i\R$:
\begin{equation}\label{eq:projtrick}
\varphi(\mu)  \frac{-\nu}{\mu -{\lambda_*}} = 
\frac{\varphi({\lambda_*}) - \varphi(\mu)}
{\mu - {\lambda_*}} \nu 
- \frac{ \varphi({\lambda_*})}{\mu - {\lambda_*}}\nu,
\end{equation}
where the first term is in $H^{2}(\cplus;\Yscr)$ and the second term is in 
$H^2(\cminus;\Yscr)$; from here we deduce that
$$
  {\mathrm H}_{\varphi} \,e_-(-\overline{\lambda_*})^*\,\nu = \mu\mapsto
 \frac{\varphi({\lambda_*}) - \varphi(\mu)}{\mu - {\lambda_*}} \nu,
 \qquad\mu\in\cplus,
$$
where $e_-(-\overline{\lambda_*})^*\,\nu =k_-(\cdot,-\overline{\lambda_*})\,\nu$. Then, using the injections $\iota_c:\Hscr_c\to H^2(\cplus;\Uscr)$ and $\iota_o:\Hscr_o\to H^2(\cplus;\Yscr)$, we have for all $\mu,\lambda_*\in\cplus$ that
$$
  e_+(\mu)\,\iota_o\, \Cfrak\,\Bfrak \, \iota_c^* \, e_+(\overline{\lambda_*})^* = e_c(\mu)\, \Gamma\, e_c(\overline{\lambda_*})^* 
  = e_+(\mu) \,\mathrm H_\varphi\, e_-(-\overline{\lambda_*})^*;
$$
see Thm.\ I.2.4. Taking linear combinations and closing, we get
\begin{equation}\label{eq:HankelFact}
  \iota_o\, \Cfrak\,\Bfrak \, \iota_c^*=\mathrm H_\varphi\, \ya,
\end{equation}
with the reflection being $\ya\, :e_+(\overline{\lambda_*})^*\,\nu\mapsto e_-(-\overline{\lambda_*})^*\,\nu$, $\lambda_*\in\cplus$, $\nu\in\Uscr$, extended by linearity and continuity to a unitary operator $H^2(\cplus;\Uscr)\to H^2(\cminus;\Uscr)$. 

The contractions $\Bfrak \, \iota_c^*:H^2(\cplus;\Uscr)\to\Xscr$ and $\iota_o\, \Cfrak:\Xscr\to H^2(\cplus;\Yscr)$ factorizing the Hankel operator are also sometimes referred to as (frequency-domain) input and output maps, but here we refer to $\Bfrak$ and $\Cfrak$ by these names. From a systems-theory point of view, it would perhaps be more natural to take the state space $\Hscr_c$ of the controllable energy-preserving model to be a subspace of $H^2(\cminus;\Uscr)$ rather than a subspace of $H^2(\cplus;\Uscr)$, since one in time domain often considers input signals in past time $\rminus$ rather than in future time $\rplus$. In particular, the reflection is then absent in \eqref{eq:HankelFact}.

\subsection{A generalized uniqueness result}\label{sec:Intertw}

We see that $\Bfrak$ in the previous subsection is precisely the unitary similarity operator $\Delta$ in Thm.\ I.4.3 and $\Cfrak$ is the adjoint of $\Delta$ in Thm.\ I.5.2. We now proceed to obtain improvements on these uniqueness results in Part I. First we need to relax the notion of unitary similarity from Thm.\ I.4.3. Please note that this subsection first considers general system nodes, not only passive ones.

\begin{defn}\label{def:intertw}
Let $\SmallSysNode_0$ and $\SmallSysNode_1$ be two system nodes with state spaces $\Xscr_0$ and $\Xscr_1$, respectively, and the same input spaces $\Uscr$ and output spaces $\Yscr$. Let $E$ map $\Xscr_0$ linearly and boundedly into $\Xscr_1$.

We say that $E$ \emph{intertwines $\SmallSysNode_0$ with $\SmallSysNode_1$} if 
\begin{align}
	\bbm{E&0\\0&1}\dom{\SysNode_0} &\subset \dom{\SysNode_1} \qquad\text{and}  \label{eq:intertwdom} \\
  	\bbm{E&0\\0&1}\SysNode_0 &= \SysNode_1\bbm{E&0\\0&1}\bigg|_{\dom{\SmallSysNode_0}}. \label{eq:intertw} 
\end{align}
If $E$ is a contraction (an isometry), then we call the intertwinement \emph{contractive (isometric)}, and if $E$ is unitary then we say that $\SmallSysNode_0$ and $\SmallSysNode_1$ are \emph{unitarily similar}.
\end{defn}

Alternative characterizations of intertwinement and more detail can be found in Appendix \ref{app:intertwinement}. Paralleling \cite[Thms.\ 8.4 and 9.5]{ArKuStCanon}, we have the following uniqueness result which is stronger than Thms.\ I.4.3 and I.5.2:

\begin{thm}\label{thm:sharperint}
The following statements hold for every passive system node $\SmallSysNode$ with transfer function $\varphi$, input map $\Bfrak$, and output map $\Cfrak$:

\begin{enumerate}
\item The input map $\Bfrak$ intertwines the energy-preserving model $\SmallSysNode_c$ for $\varphi$ contractively with $\SmallSysNode$. This intertwinement is \emph{isometric} if $\SmallSysNode$ is energy preserving. The intertwinement $\Bfrak$ has range dense in $\Xscr$ if and only if $\SmallSysNode$ is controllable. Moreover, $\Bfrak$ is \emph{unitary} if and only if $\SmallSysNode$ is controllable and energy-preserving.

\item The output map $\Cfrak$ intertwines $\SmallSysNode$ contractively with the co-energy-preserving observable model $\SmallSysNode_o$ for $\varphi$. This intertwinement is co-isometric if $\SmallSysNode$ is co-energy preserving and $\Cfrak$ is injective if and only if $\SmallSysNode$ is observable. Furthermore, $\Cfrak$ is unitary if and only if $\SmallSysNode$ is observable and co-energy preserving.

\item In particular, $\Bfrak_o=\Gamma$ intertwines $\SmallSysNode_c$ with $\SmallSysNode_o$. Hence, for all $\sbm{x\\u}\in\dom{\SmallSysNode_c}$ and $\sbm{z\\y}=\SmallSysNode_c\sbm{x\\u}$: \begin{equation}\label{eq:GammaSc}
	(\Gamma z)(\mu) = \mu \cdot (\Gamma x)(\mu) + \varphi(\mu)\,u-y,\qquad \mu\in\cplus.
\end{equation}
\end{enumerate}
\end{thm}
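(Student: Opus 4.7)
The plan is to leverage three ingredients established earlier in \S\ref{sec:sysnode}: (i) Example~\ref{ex:canonio}, which says $\Bfrak_c=1_{\Hscr_c}$, $\Cfrak_c=\Gamma$ for $\SmallSysNode_c$ and dually $\Bfrak_o=\Gamma$, $\Cfrak_o=1_{\Hscr_o}$ for $\SmallSysNode_o$; (ii) Proposition~\ref{prop:pfmap}, which forces $\Cfrak\,\Bfrak=\Gamma$ for \emph{every} passive realization of $\varphi$; and (iii) the universal identity
\[
  \SysNode \bbm{(\mu-A_{-1})^{-1}B\nu\\ \nu}=\bbm{\mu(\mu-A_{-1})^{-1}B\nu\\\varphi(\mu)\nu},\qquad \mu\in\cplus,\ \nu\in\Uscr,
\]
which is just the definition of the transfer function combined with $(\mu-A_{-1})(\mu-A_{-1})^{-1}B\nu=B\nu$. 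With these in hand, part~(1) can be proved directly; (3) will become the special case $\SmallSysNode=\SmallSysNode_o$, and (2) will follow by duality.

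For part~(1), the key technical step is the resolvent intertwining
\[
  \Bfrak\,(\mu-A_c)^{-1}=(\mu-A)^{-1}\,\Bfrak,\qquad\mu\in\cplus.
\]
By~(i) the kernel functions $e_c(\overline{\lambda_*})^*\nu=(\lambda_*-A_{c,-1})^{-1}B_c\nu$ span $\Hscr_c$ densely, so it suffices to verify the identity on them. The first resolvent identity rewrites $(\mu-A_c)^{-1}(\lambda_*-A_{c,-1})^{-1}B_c\nu$ as $\frac{1}{\mu-\lambda_*}\bigl[(\lambda_*-A_{c,-1})^{-1}-(\mu-A_{c,-1})^{-1}\bigr]B_c\nu$, and by \eqref{eq:inputaction} the operator $\Bfrak$ sends this to $\frac{1}{\mu-\lambda_*}\bigl[(\lambda_*-A_{-1})^{-1}-(\mu-A_{-1})^{-1}\bigr]B\nu$, which by the same identity equals $(\mu-A)^{-1}(\lambda_*-A_{-1})^{-1}B\nu=(\mu-A)^{-1}\Bfrak(\lambda_*-A_{c,-1})^{-1}B_c\nu$. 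Linearity and continuity then close the argument, and in particular $\Bfrak\dom{A_c}\subset\dom{A}$ with $A\Bfrak=\Bfrak A_c$ on $\dom{A_c}$. To pass from main operator to full system node, I would decompose any $\sbm{x\\u}\in\dom{\SmallSysNode_c}$ as $\sbm{x_0\\0}+\sbm{e_c(\overline\mu)^*u\\u}$ with $x_0:=x-e_c(\overline\mu)^*u\in\dom{A_c}$, and use the universal identity~(iii) simultaneously on $\SmallSysNode_c$ and $\SmallSysNode$ to dispose of the second summand and $A\Bfrak=\Bfrak A_c$ for the first. The only remaining identity, $C\Bfrak x_0=C_c x_0$ for $x_0\in\dom{A_c}$, follows from $\Cfrak\Bfrak=\Cfrak_c$ together with the pointwise formula $Cv=\lambda(\Cfrak v)(\lambda)-(\Cfrak Av)(\lambda)$, valid for every $v\in\dom{A}$.

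For~(3), applying~(1) with $\SmallSysNode=\SmallSysNode_o$ and using $\Bfrak_o=\Gamma$ from~(i) shows that $\Gamma$ intertwines $\SmallSysNode_c$ with $\SmallSysNode_o$; writing out the intertwining identity with the explicit action of $\SmallSysNode_o$ recalled from Part~I (output equal to the limit at $\infty$, state evolution by multiplication-by-$\mu$ up to the feedthrough) then yields \eqref{eq:GammaSc}. For~(2), I would apply~(1) to the passive system node $\SmallSysNode^*$, whose transfer function is $\widetilde\varphi$ and whose input map is $\Cfrak^*$ by \eqref{eq:dualio}; its energy-preserving controllable model is $\SmallSysNode_o^*$. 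The resulting contractive intertwinement of $\SmallSysNode_o^*$ with $\SmallSysNode^*$ by $\Cfrak^*$ is dualized via \eqref{eq:tfi} into a contractive intertwinement of $\SmallSysNode$ with $\SmallSysNode_o$ by $\Cfrak$.

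Contractivity of $\Bfrak$ and $\Cfrak$ is exactly Theorem~\ref{thm:iomaps}, so the contractive/co-isometric adjectives come for free. The remaining characterizations are then collected from known results: ``dense range iff controllable'' is the definition $\Rfrak=\crange{\Bfrak}$ from \eqref{eq:UnobsReach}; ``unitary iff controllable and energy-preserving'' is Thm~I.4.3 verbatim; and ``isometric iff energy-preserving'' I would obtain by restricting $\SmallSysNode$ to the closed semigroup-invariant subspace $\crange{\Bfrak}$ (invariant precisely because $\Bfrak$ intertwines the resolvents), so that the restriction is automatically a controllable passive realization of $\varphi$ to which Thm~I.4.3 applies. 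The main obstacle I anticipate is precisely this last step: one must check that the RKHS norm on $\Hscr_c$ really matches the scattering-energy gap on reachable states, so that the restricted realization is \emph{energy-preserving} rather than merely passive — but this matching is exactly the content of Thm~I.4.3 in the controllable case, so the remaining work is pure bookkeeping.
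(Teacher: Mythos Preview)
Your overall architecture matches the paper's exactly: verify the intertwinement of $\Bfrak$ directly (the paper simply points out that the proof of Thm.~I.4.3 goes through with $\Delta=\Bfrak$ merely continuous, whereas you spell out the resolvent-identity computation on kernel functions---same content, more detail), then obtain part~(2) by applying part~(1) to $\SmallSysNode^*$ and dualizing, and part~(3) by specializing to $\SmallSysNode=\SmallSysNode_o$ with $\Bfrak_o=\Gamma$ and reading off \eqref{eq:GammaSc} from the explicit form of $\SmallSysNode_o$ in Thm.~I.5.1.

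Two small corrections. First, in part~(2) you invoke \eqref{eq:tfi} to pass from ``$\Cfrak^*$ intertwines $\SmallSysNode_o^*$ with $\SmallSysNode^*$'' to ``$\Cfrak$ intertwines $\SmallSysNode$ with $\SmallSysNode_o$''. That identity is only available for \emph{conservative} nodes; what you actually need is the general equivalence ``$E$ intertwines $S_0$ with $S_1$ $\iff$ $E^*$ intertwines $S_1^*$ with $S_0^*$'', which is Lemma~\ref{lem:dualinter} in the appendix (and is what the paper cites). Second, ``unitary $\Rightarrow$ energy-preserving'' is not in Thm.~I.4.3 verbatim: that theorem gives the forward direction (controllable $+$ energy-preserving $\Rightarrow$ $\Bfrak$ unitary), but for the converse you need that unitary similarity transports energy preservation from $\SmallSysNode_c$ to $\SmallSysNode$, which is Thm.~\ref{thm:intertwprops}(2). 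Your restriction-to-$\crange{\Bfrak}$ manoeuvre for ``energy-preserving $\Rightarrow$ isometric'' works but is a detour; the paper just notes that the isometry calculation in the proof of Thm.~I.4.3 never uses controllability, only the scattering-energy identity.
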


\begin{proof}
We begin with statement one. In Thm.\ I.4.3, $\Delta=\Bfrak$ and the intertwinement part of the proof goes through even if this operator is only continuous. The proof that $\Bfrak$ is isometric if $\SmallSysNode_c$ is energy preserving is also the same as in Thm.\ I.4.3. The connection between controllability and dense range is immediate from \eqref{eq:UnobsReach}. That $\SmallSysNode$ is energy preserving in the unitary case follows from Thm.\ \ref{thm:intertwprops}.3.

We obtain statement two by duality: The input map of $\SmallSysNode^*$ is $\Cfrak^*$ by \eqref{eq:dualio}, and this operator intertwines the energy-preserving model $\SmallSysNode_o^*$ for $\widetilde\varphi(\mu)=\varphi(\overline\mu)^*$ contractively with $\SmallSysNode^*$; see the introduction to \S I.5. Using Lemma \ref{lem:dualinter}, we obtain that $\Cfrak$ intertwines $\SmallSysNode$ with $\SmallSysNode_o$. The rest of the claim is immediate from the definitions of co-isometry and co-energy-preserving system node.

By Ex.\ \ref{ex:canonio}, $\Bfrak_o=\Gamma$, and then assertion one with Def.\ \ref{def:intertw} gives
$$
	\bbm{z\\y}=\SysNode_c\bbm{x\\y}\quad\implies\quad
	\bbm{\Gamma z\\y}=\SysNode_o\bbm{\Gamma x\\y};
$$
finally Thm.\ I.5.1.3 gives \eqref{eq:GammaSc}.
\end{proof}

\section{The state space of the conservative model}\label{sec:statespace}

The first step in the development is to construct a positive $2\times 2$-block kernel function using only $\varphi$, whose reproducing kernel Hilbert space will be the state space of the realization $\SmallSysNode_s$. As in \cite[\S8]{ArStPartIV}, we develop the theory using a four-variable kernel rather than the standard two-variable kernel, hoping to make visible how the observable co-energy-preserving and controllable energy-preserving functional models are combined into the conservative simple model.

We begin with a general result on how a RKHS can arise as the range of a multiplication operator:

\begin{lem}\label{lem:multspace}
Let $\Omega$ be a point set, $\cX$, $\cY$ be 
Hilbert spaces and let $H \colon \Omega \to \cL(\cX, \cY)$  be an 
operator-valued function.  Define a subspace $\cH_{H}$ of the linear space of $\cY$-valued functions on $\Omega$ by
$$
	\cH_{H} := \{ H(\cdot)\, x \colon x \in \cX\}
	\quad\text{with lifted norm}\quad
	\| H(\cdot)\, x \|_{\Hscr_H} = \| P_{\Ker {M_H}^\perp} x \|_\Xscr,
$$
where $M_{H}$ is the multiplication operator $M_H \colon x 
\mapsto H(\cdot)\,x$ from $\cX$ to $\cH_{H}$.  

Then $\cH_{H}$ is a reproducing kernel Hilbert space 
with reproducing kernel $K_{H}(z,w) := H(z) H(w)^{*}$ and $M_H$ maps $\Ker{M_H}^\perp$ unitarily onto $\Hscr_H$.
\end{lem}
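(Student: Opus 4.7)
The plan is to verify, in order, (i) well-definedness of the norm on $\Hscr_H$, (ii) that $\Hscr_H$ equipped with this norm is a Hilbert space with $M_H|_{\Ker{M_H}^\perp}$ a unitary isomorphism onto it, and (iii) the reproducing-kernel property. The argument is essentially formal, the only thing to keep track of is the distinction between an element $H(\cdot)x \in \Hscr_H$ and its (non-unique) representative $x \in \cX$.

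First, I would note that $\Ker{M_H} = \bigcap_{\lambda\in\Omega}\Ker{H(\lambda)}$ is closed in $\cX$ since every $H(\lambda)$ is bounded, so that the orthogonal projection $P_{\Ker{M_H}^\perp}$ exists. For well-definedness of the lifted norm, suppose $H(\cdot)x_1 = H(\cdot)x_2$ on $\Omega$; then $x_1-x_2\in\Ker{M_H}$, so $P_{\Ker{M_H}^\perp}x_1 = P_{\Ker{M_H}^\perp}x_2$ and the norm does not depend on the representative. The map $M_H|_{\Ker{M_H}^\perp}:\Ker{M_H}^\perp\to\Hscr_H$ is then a bijection (surjectivity is by construction, and injectivity follows because any $x\in\Ker{M_H}^\perp$ satisfying $M_Hx=0$ lies in $\Ker{M_H}\cap\Ker{M_H}^\perp=\{0\}$), and it is isometric by the very definition of the lifted norm. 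Hence $\Hscr_H$ is isometrically isomorphic to the closed subspace $\Ker{M_H}^\perp$ of the Hilbert space $\cX$, and so is itself a Hilbert space. This establishes the second conclusion.

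For the reproducing-kernel property, fix $w\in\Omega$ and $y\in\cY$. Then $H(w)^*y\in\cX$, so $K_H(\cdot,w)y = H(\cdot)H(w)^*y = M_H(H(w)^*y)$ belongs to $\Hscr_H$. For an arbitrary $f=H(\cdot)x\in\Hscr_H$, using the unitary identification from step two (so that inner products in $\Hscr_H$ transfer to inner products in $\Ker{M_H}^\perp\subset\cX$), I would compute
\begin{equation*}
  \Ipdp{f}{K_H(\cdot,w)y}_{\Hscr_H}
  = \Ipdp{P_{\Ker{M_H}^\perp}x}{P_{\Ker{M_H}^\perp}H(w)^*y}_\cX
  = \Ipdp{P_{\Ker{M_H}^\perp}x}{H(w)^*y}_\cX,
\end{equation*}
where in the last step the extra projection on the right may be dropped because it acts on the second entry against something already in $\Ker{M_H}^\perp$. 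Since $x - P_{\Ker{M_H}^\perp}x\in\Ker{M_H}\subseteq\Ker{H(w)}$, this in turn equals $\Ipdp{x}{H(w)^*y}_\cX = \Ipdp{H(w)x}{y}_\cY = \Ipdp{f(w)}{y}_\cY$, which is the reproducing identity.

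I do not expect any real obstacle here. The only delicate point is the interplay between representatives in $\cX$ and elements of $\Hscr_H$ in the reproducing-kernel computation, but this is cleanly handled by systematically replacing $x$ with $P_{\Ker{M_H}^\perp}x$ and exploiting that $\Ker{M_H}\subseteq\Ker{H(w)}$ for every $w\in\Omega$.
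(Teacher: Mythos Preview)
Your proof is correct and follows essentially the same approach as the paper's: both verify that $M_H|_{\Ker{M_H}^\perp}$ is a unitary identification and then establish the reproducing identity by moving between $\langle f(w),y\rangle_\cY$ and $\langle f,K_H(\cdot,w)y\rangle_{\Hscr_H}$ via the projection onto $\Ker{M_H}^\perp$. The paper's version is slightly terser (it observes directly that $H(w)^*y\in\Ker{M_H}^\perp$, whereas you reach the same conclusion through two projection manipulations), but the substance is identical.
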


\begin{proof}
By Thm.\ I.1.1, $K$ is the reproducing kernel of some uniquely determined Hilbert space of functions. Let $H(\cdot) x \in \cH_{H}$ and let $w \in \Omega$. 
For $y \in \cY$ we note that
$$
\langle H(w)\, x, y \rangle_{\cY} = \langle x, H(w)^{*}y \rangle_{\cX}.
$$
Noting that $\Ker {M_H} = \{ \xi \in \cX \colon H(z)\, \xi = 
0 \text{ for all } z \in \Omega\}$, we get $H(w)^{*}y \in \Ker{M_H}^{\perp}$ and
$\langle x, H(w)^{*}y \rangle_{\cX} = \langle x', H(w)^{*}y \rangle_{\cX}$
where $x' = P_{(\operatorname{Ker} M_{H})^{\perp}} x$.  By 
construction $M_{H}$ is an isometry from $\Ker{M_H}^{\perp}$ onto $\cH_{H}$.  Hence the above calculation 
continues as
$$
\langle H(w)\, x, y \rangle_{\cY} = \langle H(\cdot) \, x', H(\cdot) \,
H(w)^{*}y \rangle_{\cH_{H}} = \langle H(\cdot)\,x, H(\cdot)\, H(w)^{*}y \rangle_{\cH_{H}}
$$
and we conclude that $K(z,w) = H(z) H(w)^{*}$ works as the RK of $\cH_{H}$. 
\end{proof}

Taking $\Omega=\cplus$ and either $H(\mu)=C(\mu-A)^{-1}$ or $H(\mu)=B^*(\mu-A^*)^{-1}$, we get the following interesting special cases (recall Thm.\ \ref{thm:iomaps} and the text around \eqref{eq:dualio}):

\begin{cor}\label{cor:multspace}
Assume that $\SmallSysNode$ is a passive system node with input/state/\newline output spaces $(\Uscr,\Xscr,\Yscr)$, input map $\Bfrak$, and output map $\Cfrak$. 

If $\range{\mathfrak C}$ is equipped with the lifted norm $ \| {\mathfrak C} x \|_{\range\Cfrak} = \| P_{\Rfrak^\dagger}x \|_{\cX}$ then ${\mathfrak C}$ maps $\Rfrak^\dagger$ unitarily onto $\range\Cfrak$ which is a RKHS $\Hscr_\Cfrak$ with reproducing kernel
$$ K_{\mathfrak C}(\mu, \lambda) = C(\mu - A)^{-1} (\overline{\lambda} - A^{d}_{-1})^{-1} 
C^{*},\qquad \mu,\lambda\in\cplus.$$
Similarly, $\Bfrak^*$ is a unitary identification of $\Rfrak$ with $\Hscr_{\Bfrak^*}$, where
$$
	\Hscr_{\Bfrak^*}:=\range{\Bfrak^*}
	\qquad\text{with lifted norm}\qquad
	\|\Bfrak^*x\|_{\range{\Bfrak^*}}=\|P_{\Rfrak}\, x\|_\Xscr
$$
is the RKHS with reproducing kernel
$$ K_{\mathfrak B^*}(\mu, \lambda) = B^*(\mu - A^*)^{-1} (\overline{\lambda} - A_{-1})^{-1} B,\qquad \mu,\lambda\in\cplus.$$
\end{cor}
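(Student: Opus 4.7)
The plan is to invoke Lemma \ref{lem:multspace} twice, with appropriate choices of the operator-valued function $H$, and then to check that the resulting reproducing kernels coincide with the expressions in the statement.

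For the $\Cfrak$-assertion I would take $\Omega=\cplus$, $\cX=\Xscr$, $\cY=\Yscr$ and $H(\mu):=C(\mu-A)^{-1}$; this takes values in $\Lscr(\Xscr,\Yscr)$ since $(\mu-A)^{-1}\in\Lscr(\Xscr,\dom A)$ and $C\in\Lscr(\dom A,\Yscr)$. By formula \eqref{eq:outputaction}, the associated multiplication operator $M_H$ equals $\Cfrak$. From the definitions \eqref{eq:UnobsReach} we read off $\Ker M_H=\Ker\Cfrak=\Ufrak$, hence $(\Ker M_H)^\perp=\Rfrak^\dagger$. Lemma \ref{lem:multspace} then immediately yields that $\Cfrak$ maps $\Rfrak^\dagger$ unitarily onto $\range\Cfrak$ with the lifted norm $\|\Cfrak x\|_{\range\Cfrak}=\|P_{\Rfrak^\dagger}x\|_\Xscr$, and that $\range\Cfrak$ is a RKHS with reproducing kernel $H(\mu)H(\lambda)^*$.

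For the $\Bfrak^*$-assertion I apply the same lemma with $\cX=\Xscr$, $\cY=\Uscr$ and $H(\mu):=B^*(\mu-A^*)^{-1}$, which is bounded $\Xscr\to\Uscr$ for every $\mu\in\cplus$ by applying the observation of the previous paragraph to the dual system $\SmallSysNode^*$. By \eqref{eq:dualio} we have $M_H=\Cfrak^d=\Bfrak^*$, so $\Ker M_H=\Ker\Bfrak^*=(\crange\Bfrak)^\perp=\Rfrak^\perp$ and hence $(\Ker M_H)^\perp=\Rfrak$. Lemma \ref{lem:multspace} then gives that $\Bfrak^*$ restricts to a unitary from $\Rfrak$ onto $\range{\Bfrak^*}=\Hscr_{\Bfrak^*}$ with the stated lifted norm and reproducing kernel $H(\mu)H(\lambda)^*$.

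The only remaining item is to rewrite $H(\lambda)^*$ in each case. For the $\Cfrak$-case, $C(\lambda-A)^{-1}\in\Lscr(\Xscr,\Yscr)$ is everywhere defined and bounded, so its Hilbert-space adjoint is bounded from $\Yscr$ to $\Xscr$; interpreting $C^*\in\Lscr(\Yscr,\Xscr^d_{-1})$ in the usual way and recalling that $A^d_{-1}$ is the extension of $A^*$ to its extrapolation space, a direct extrapolation-space computation (analogous to the duality pairings used in the proof of Lemma \ref{lem:consdual}) gives $[C(\lambda-A)^{-1}]^*=(\overline\lambda-A^d_{-1})^{-1}C^*$. A completely symmetric computation, interchanging the roles of $A$ and $A^*$ and of $B,C$, yields $[B^*(\lambda-A^*)^{-1}]^*=(\overline\lambda-A_{-1})^{-1}B$. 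Substituting these identities into $K_H(\mu,\lambda)=H(\mu)H(\lambda)^*$ produces the two kernel formulas in the statement.

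The proof is really routine once Lemma \ref{lem:multspace} is in place; the only point requiring any care is the bookkeeping with the extrapolation spaces $\Xscr_{-1}$ and $\Xscr^d_{-1}$ when transposing $C(\lambda-A)^{-1}$ and $B^*(\lambda-A^*)^{-1}$, i.e.\ making sure that the apparently unbounded expressions $(\overline\lambda-A^d_{-1})^{-1}C^*$ and $(\overline\lambda-A_{-1})^{-1}B$ do define bounded operators $\Yscr\to\Xscr$ and $\Uscr\to\Xscr$, respectively. This is exactly the content of the resolvent identities already exploited in Part~I, so no new difficulty arises.
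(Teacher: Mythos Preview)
Your proof is correct and follows exactly the approach the paper takes: the corollary is stated right after the sentence ``Taking $\Omega=\cplus$ and either $H(\mu)=C(\mu-A)^{-1}$ or $H(\mu)=B^*(\mu-A^*)^{-1}$, we get the following interesting special cases,'' which is the whole proof. Your write-up simply fills in the routine identifications $\Ker M_H=\Ufrak$ (resp.\ $\Rfrak^\perp$) and the adjoint formula $[C(\lambda-A)^{-1}]^*=(\overline\lambda-A^d_{-1})^{-1}C^*$ that the paper leaves implicit.
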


The following result which draws some inspiration from \cite[Thm.\ 2.1.2]{ADRSBook} determines the kernel function \eqref{eq:conskernelexp} needed to define the state space of the conservative realization.

\begin{prop}\label{prop:kolmogorov}
Let $\SmallSysNode$ be an arbitrary system node with transfer function $\varphi$.
\begin{enumerate}
\item If $\SmallSysNode$ is co-energy preserving then the reproducing kernel $K_o$ defining the state space for the observable model $\SmallSysNode_o$ factorizes as
\begin{equation}\label{eq:coenpreskernel}
  K_o(\mu,\lambda)=\frac{1-\varphi(\mu)\varphi(\lambda)^*}{\mu+\overline{\lambda}}=C(\mu-A)^{-1}(\overline\lambda-A^d_{-1})^{-1}C^*,\quad \mu,\lambda\in\cplus,
\end{equation}
and the output map $\Cfrak$ maps $\Rfrak^\dagger$ unitarily onto $\Hscr_o$.

\item Set as before $\widetilde\varphi(\mu):=\varphi(\overline\mu)^*$, $\mu\in\cplus$. If $\SmallSysNode$ is energy preserving then $K_c$ associated to the controllable model $\SmallSysNode_c$ factorizes as
\begin{equation}\label{eq:enpreskernel}
  K_c(\mu_*,\lambda_*)=\frac{1-\widetilde\varphi(\mu_*)\widetilde\varphi(\lambda_*)^*}{\mu_*+\overline{\lambda_*}}=
    B^*(\mu_*-A^*)^{-1}(\overline{\lambda_*}-A^d_{-1})^{-1}B,
\end{equation}
$\mu_*,\lambda_*\in\cplus$, and the input map $\Bfrak$ maps $\Hscr_c$ unitarily onto $\Rfrak$.

\item Define
\begin{equation}\label{eq:HGdef}
  H(\mu):=C(\mu-A)^{-1},\quad G(\mu_*):=B^*(\mu_*-A^*)^{-1},\quad \mu,\mu_*\in\cplus.
\end{equation}

If $\SmallSysNode$ is conservative, then $K_s$ defined in \eqref{eq:conskernelexp} factorizes into
\begin{equation}\label{eq:conskernel}
  K_s(\mu,\mu_*,\lambda,\lambda_*)=
  \bbm{H(\mu)\\G(\mu_*)}\bbm{H(\lambda)^*&G(\lambda_*)^*},
  \quad \mu,\lambda,\mu_*,\lambda_*\in\cplus.
\end{equation}
\end{enumerate}
\end{prop}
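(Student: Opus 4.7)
The plan is as follows. For parts (1) and (2) I would apply Corollary \ref{cor:multspace} in combination with Theorem \ref{thm:sharperint}. For (1), co-energy preservation implies via Theorem \ref{thm:sharperint}(2) that $\Cfrak$ is a co-isometry from $\Xscr$ onto $\Hscr_o$; consequently $\Cfrak$ restricts to a unitary from $\Rfrak^\dagger$ onto $\Hscr_o$, so $\range{\Cfrak}=\Hscr_o$ as sets and the lifted norm $\|\Cfrak x\|_{\range{\Cfrak}}=\|P_{\Rfrak^\dagger}x\|_\Xscr$ from Corollary \ref{cor:multspace} agrees with the $\Hscr_o$-norm. That corollary (with $H(\mu)=C(\mu-A)^{-1}$) then identifies the reproducing kernel of $\range{\Cfrak}$ as $C(\mu-A)^{-1}(\overline{\lambda}-A^d_{-1})^{-1}C^{*}$; uniqueness of the reproducing kernel delivers the second equality in \eqref{eq:coenpreskernel}. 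Part (2) proceeds symmetrically: energy preservation makes $\Bfrak$ isometric by Theorem \ref{thm:sharperint}(1), so $\Bfrak^{*}$ is a co-isometry from $\Xscr$ onto $\Hscr_c$, and the second half of Corollary \ref{cor:multspace} (with $H(\mu_*)=B^{*}(\mu_*-A^{*})^{-1}$) identifies $K_c$ with the claimed operator expression.

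For part (3), the conservativity of $\SmallSysNode$ makes it both energy and co-energy preserving, so (1) and (2) immediately give the $(1,1)$- and $(2,2)$-diagonal blocks of the factorization \eqref{eq:conskernel}. It remains to identify the off-diagonal blocks. For the $(1,2)$-block I would verify
\[
  H(\mu)G(\lambda_*)^{*}=C(\mu-A)^{-1}(\overline{\lambda_*}-A_{-1})^{-1}B=\frac{\varphi(\overline{\lambda_*})-\varphi(\mu)}{\mu-\overline{\lambda_*}}
\]
by applying the resolvent identity
\[
  (\mu-A)^{-1}(\overline{\lambda_*}-A_{-1})^{-1}B\nu=\frac{(\mu-A_{-1})^{-1}B\nu-(\overline{\lambda_*}-A_{-1})^{-1}B\nu}{\overline{\lambda_*}-\mu},
\]
and then invoking the standard system-node identity $\bbm{(\eta-A_{-1})^{-1}B\nu\\\nu}\in\dom{\SmallSysNode}$ with $\CD$-image equal to $\varphi(\eta)\nu$ (cf.\ \S I.3), which applied to the difference in the numerator (whose input component vanishes) produces $\varphi(\mu)\nu-\varphi(\overline{\lambda_*})\nu$. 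The $(2,1)$-block then follows either by the analogous calculation on the dual system $\SmallSysNode^{*}$ with transfer function $\widetilde\varphi$, or more economically by taking pointwise operator adjoints via $G(\mu_*)H(\lambda)^{*}=\bigl(H(\lambda)G(\mu_*)^{*}\bigr)^{*}$. Assembling the four blocks through Lemma \ref{lem:multspace} applied to the column-valued function $\sbm{H(\mu)\\G(\mu_*)}$ finally produces the factorization \eqref{eq:conskernel}.

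The main technical difficulties I anticipate lie in the careful bookkeeping of domains and extrapolation spaces: $C$ is defined only on $\dom{A}$, and $B$ takes values in the extrapolation $\Xscr_{-1}$ rather than in $\Xscr$, so the off-diagonal calculation must be justified through the combined operator $\CD$ acting on elements of $\dom{\SmallSysNode}$ rather than by treating $C$ and $B$ in isolation. A secondary point to pin down is that the lifted norm furnished by Corollary \ref{cor:multspace} genuinely coincides with the intrinsic $\Hscr_o$- or $\Hscr_c$-norm, which is exactly what the isometry/co-isometry conclusions of Theorem \ref{thm:sharperint} supply. Once these points are handled, the remaining work is routine resolvent manipulation and the definition of the transfer function.
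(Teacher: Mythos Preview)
Your approach is correct, and for part (3) it coincides with the paper's: the off-diagonal blocks are obtained from exactly the resolvent-identity computation you describe (this is \eqref{eq:resdiff} in the paper), and the diagonal blocks come from parts (1) and (2).

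For parts (1) and (2), however, you run the argument in the opposite direction from the paper. You invoke Theorem~\ref{thm:sharperint} to get that $\Cfrak$ (resp.\ $\Bfrak$) is co-isometric (resp.\ isometric), and then use Corollary~\ref{cor:multspace} together with uniqueness of reproducing kernels to read off the factorization $K_o=K_\Cfrak$ (resp.\ $K_c=K_{\Bfrak^*}$). The paper instead establishes the kernel factorization \emph{first}, by a direct computation: it applies the energy-preserving identity (I.3.14) to the pair of vectors $\sbm{G(\lambda_*)^*\nu\\ \nu}$ and $\sbm{G(\mu_*)^*v\\ v}$ (both of which lie in $\dom{\SmallSysNode}$ by (I.3.6)) and reads off $(\mu_*+\overline{\lambda_*})\,G(\mu_*)G(\lambda_*)^*=1-\widetilde\varphi(\mu_*)\widetilde\varphi(\lambda_*)^*$, obtaining \eqref{eq:enpreskernel}; then \eqref{eq:coenpreskernel} follows by duality. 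Only \emph{after} the kernel identity $K_\Cfrak=K_o$ is in hand does the paper appeal to Corollary~\ref{cor:multspace} to conclude the unitarity of $\Cfrak\big|_{\Rfrak^\dagger}$. Your route is more economical in that it recycles the intertwinement machinery already set up in Theorem~\ref{thm:sharperint}; the paper's route is more self-contained, since it derives the kernel factorization straight from the defining energy balance without passing through the canonical-model intertwinements.
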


Clearly, $H(\mu)=e_o(\mu)\,\Cfrak$ and $G(\mu_*)=e_c(\mu_*)\,\Bfrak^*$. The previous result also extends (I.4.73) and the first formula in Prop.\ I.5.8, since the realization of $\varphi$ is arbitrary\,--\,only suitable energy properties are assumed. 

Assuming that $\varphi\in\Sscr(\cplus;\Uscr,\Yscr)$, the kernel $K_s$ has removable singularities at $\mu_*=\overline\lambda$ and $\mu=\overline{\lambda_*}$. When we remove these singularities by continuity, the kernel becomes holomorphic with its values being bounded operators on $\sbm{\Yscr\\\Uscr}$. In the sequel we ignore removable singularities, assuming that they have been removed.

\begin{proof}
Let $(\Uscr,\Xscr,\Yscr)$ denote the input/state/output spaces of $\SmallSysNode$. We begin by proving \eqref{eq:enpreskernel}. By (I.3.6), every system node satisfies
$$
  \SysNode\bbm{G(\lambda_*)^*\\1} = \bbm{\overline{\lambda_*} G(\lambda_*)^*\\\varphi(\overline{\lambda_*})},\quad \lambda_*\in\cplus.
$$
For $\SmallSysNode$ energy preserving, (I.3.14) gives that for all $\lambda_*,\mu_*\in\cplus$, $\nu,v\in\Uscr$:
$$
\begin{aligned}
  &\Ipdp{\overline{\lambda_*}\, G(\lambda_*)^*\nu}{G(\mu_*)^*v}_\Xscr +
  \Ipdp{G(\lambda_*)^*\nu}{\overline{\mu_*}\, G(\mu_*)^*v}_\Xscr \\
  &\qquad =  \Ipdp{\nu}{v}_\Uscr - \Ipdp{\varphi(\overline{\lambda_*})\nu}{\varphi(\overline{\mu_*})v}_\Yscr.
\end{aligned}
$$
This implies that 
$$
  (\overline{\lambda_*}+\mu_*)\,G(\mu_*)\,G(\lambda_*)^*=1-\varphi(\overline{\mu_*})^*\varphi(\overline{\lambda_*}),
$$
i.e., that \eqref{eq:enpreskernel} holds. That \eqref{eq:coenpreskernel} holds for a co-energy preservation system node follows by applying \eqref{eq:enpreskernel} to the energy-preserving system node $\SmallSysNode^*$; recall that the transfer function of this dual system is $\widetilde\varphi$ and that $(\widetilde\varphi)\,\widetilde{}=\varphi$.

A conservative system is by definition both energy-preserving and co-energy preserving, and so \eqref{eq:coenpreskernel} and \eqref{eq:enpreskernel} both hold. Moreover, by \eqref{eq:resdiff} every system node satisfies
$$
  H(\mu) \, G(\lambda_*)^* = \displaystyle\frac{\varphi(\overline{\lambda_*})-\varphi(\mu)}{\mu-\overline{\lambda_*}}$$
and this implies
$$
  G(\mu_*) \, H(\lambda)^* = \frac{\widetilde\varphi(\overline\lambda)-\widetilde\varphi(\mu_*)}{\mu_*-\overline{\lambda}}.
$$

To establish the unitary of $\Cfrak$ from $\Rfrak^\dagger$ onto $\Hscr_o$ in assertion 1, we observe that $K_\Cfrak=K_o$ in the co-energy preserving case. This implies that $\Hscr_\Cfrak=\Hscr_o$ and then unitarity follows from Cor.\ \ref{lem:multspace}. Analogously, $\Bfrak^*$ maps $\Rfrak$ unitarily onto $\Hscr_c$, which implies that $\Bfrak$ is an isometry into $\Xscr$ with range $\Rfrak$.
\end{proof}

Alternatively, \eqref{eq:coenpreskernel} can be inferred from Thm.\ I.5.2 and \eqref{eq:enpreskernel} can also be seen as a consequence of Thm.\ I.4.3. The existence of a conservative realization of an arbitrary operator Schur function on $\cplus$ has been proved in, e.g., \cite{ArNu96,BaSt06}. Formula \eqref{eq:conskernel} provides a Kolmogorov factorization of $K_s$, which proves that $K_s$ is positive, hence the reproducing kernel of a Hilbert space. In order to keep the present article (together with Part I) self-contained, we provide a short direct proof of the positivity in the style of Part I and \cite[pp. 3321--3323]{ArStConserv}. 

\begin{lem}\label{lem:kernelpos}
For $\varphi\in\Sscr(\cplus;\Uscr,\Yscr)$, the function $K_s$ in \eqref{eq:conskernelexp} is a positive kernel on $(\cplus\times\cplus)^2$, i.e., for all $g_k\in\Yscr$, $v_k\in\Uscr$, and $\omega_k,\zeta_k\in\cplus$, $k=1,\ldots,N$, we have
$$
  \sum_{j,k=1}^N\Ipdp{\bbm{g_j\\v_j}}{K_s(\omega_j,\zeta_j,\omega_k,\zeta_k)\bbm{g_k\\v_k}}\geq0.
$$
Let $\Gamma$ be the past/future map determined by $\varphi$ in \eqref{eq:Gamma} and let $e(\mu,\mu_*)=\sbm{e_+(\mu)&0\\0&e_+(\mu_*)}$ be point-evaluation of functions in $\sbm{H^2(\cplus;\Yscr)\\H^2(\cplus;\Uscr)}$. Then the kernel can be factorized as
\begin{equation}\label{eq:kernfact}
  K_s(\mu,\mu_*,\lambda,\lambda_*) =
  e(\mu,\mu_*)\,\Iscr\bbm{1&\Gamma\\\Gamma^*&1}\Iscr^*\,
  e(\lambda,\lambda_*)^*,
\end{equation}
where $\Iscr=\sbm{\iota_o&0\\0&\iota_c}:\sbm{\Hscr_o\\\Hscr_c}\to\sbm{H^2(\cplus;\Yscr)\\H^2(\cplus;\Uscr)}$ is the injection and $\sbm{1&\Gamma\\\Gamma^*&1}$ is positive semidefinite on $\sbm{\Hscr_o\\\Hscr_c}$.
\end{lem}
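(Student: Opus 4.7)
The plan is to establish the factorization \eqref{eq:kernfact} first and then deduce positivity of $K_s$ from the positivity of the middle factor $\sbm{1 & \Gamma \\ \Gamma^* & 1}$, the latter reducing to the already-available contractivity of $\Gamma$.

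First I would verify \eqref{eq:kernfact} block by block. Since $\iota_o$ and $\iota_c$ are function-space inclusions, one has $e_+(\mu)\,\iota_o = e_o(\mu)$ and $e_+(\mu_*)\,\iota_c = e_c(\mu_*)$, together with the dual identities $\iota_o^*\,e_+(\lambda)^* = e_o(\lambda)^*$ and $\iota_c^*\,e_+(\lambda_*)^* = e_c(\lambda_*)^*$. The diagonal blocks therefore collapse to the reproducing kernels $K_o(\mu,\lambda) = e_o(\mu)\,e_o(\lambda)^*$ and $K_c(\mu_*,\lambda_*) = e_c(\mu_*)\,e_c(\lambda_*)^*$; these match the diagonal entries of $K_s$ by the de Branges--Rovnyak formulas (I.1.17), using $\widetilde\varphi(\mu_*)\widetilde\varphi(\lambda_*)^* = \varphi(\overline{\mu_*})^*\varphi(\overline{\lambda_*})$. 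For the upper-right block, formula \eqref{eq:Gamma} (with the relabeling $\overline{\lambda_*}\leftrightarrow\lambda_*$) yields $e_o(\mu)\,\Gamma\, e_c(\lambda_*)^* = \frac{\varphi(\overline{\lambda_*})-\varphi(\mu)}{\mu-\overline{\lambda_*}}$, which agrees with the upper-right entry of $K_s$; the lower-left block follows symmetrically from \eqref{eq:GammaAdj}.

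For the positivity of $\sbm{1 & \Gamma \\ \Gamma^* & 1}$ on $\sbm{\Hscr_o \\ \Hscr_c}$, I would invoke the standard $2\times 2$-block criterion: this operator is positive semidefinite precisely when $\|\Gamma\|\leq 1$. The required contraction bound comes essentially for free, since by Def.\ \ref{def:pfmap} and Thm.\ \ref{thm:iomaps}, $\Gamma = \Cfrak\,\Bfrak$ is a composition of two contractions for any passive realization of $\varphi$; such a realization exists, for instance the controllable energy-preserving model $\SmallSysNode_c$ constructed in Part I.

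Positivity of $K_s$ then falls out of the factorization: for any finite collection of points $(\omega_j,\zeta_j)\in\cplus\times\cplus$ and vectors $\bbm{g_j \\ v_j}\in\sbm{\Yscr\\\Uscr}$, setting $\xi := \sum_k \Iscr^*\, e(\omega_k,\zeta_k)^* \bbm{g_k \\ v_k} \in \sbm{\Hscr_o \\ \Hscr_c}$, the double sum in the statement collapses to $\Ipdp{\sbm{1 & \Gamma \\ \Gamma^* & 1}\xi}{\xi}\geq 0$. The only step that needs any genuine care is tracking the conjugation bars and the $\mu/\mu_*$ versus $\lambda/\lambda_*$ pairings in the off-diagonal blocks; everything else is mechanical once the action identities of Prop.\ \ref{prop:pfmap} are applied.
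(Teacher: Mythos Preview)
Your proposal is correct and follows essentially the same route as the paper: establish the factorization \eqref{eq:kernfact} block by block via the action of $\Gamma$ and $\Gamma^*$ on kernel functions (Prop.~\ref{prop:pfmap}), invoke contractivity of $\Gamma$ for positive semidefiniteness of $\sbm{1&\Gamma\\\Gamma^*&1}$, and read off positivity of $K_s$ from the factorization. The only cosmetic difference is that the paper writes out $\Iscr^*$ explicitly as $\sbm{1-T_\varphi T_\varphi^*&0\\0&1-T_{\widetilde\varphi} T_{\widetilde\varphi}^*}$ (via Thm.~I.2.4) before identifying the blocks, whereas you use the duality identity $\iota_o^*\,e_+(\lambda)^*=e_o(\lambda)^*$ directly; both amount to the same computation.
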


\begin{proof}
By Thm.\ I.2.4, we have $\Iscr^*=\sbm{1-T_\varphi T_\varphi^*&0\\0&1-T_{\widetilde\varphi} T_{\widetilde\varphi}^*}$, and then Prop.\ \ref{prop:pfmap} and Lem.\ I.2.1.3 give
\begin{equation}\label{eq:kernelfact}
\begin{aligned}
 & K_s(\mu,\mu_*,\lambda,\lambda_*) = \\ &\qquad e(\mu,\mu_*) \,\Iscr \bbm{1&\Gamma\\\Gamma^*&1}\bbm{1-T_\varphi T_\varphi^*&0\\0&1-T_{\widetilde\varphi} T_{\widetilde\varphi}^*} e(\lambda,\lambda_*)^*.
\end{aligned}
\end{equation}
which proves \eqref{eq:kernfact}. Furthermore, $\sbm{1&\Gamma\\\Gamma^*&1}$ is positive semidefinite on $\sbm{\Hscr_o\\\Hscr_c}$ due to the contractivity of $\Gamma$. Now the positivity follows upon observing that 
$$
\begin{aligned}
 \sum_{j,k=1}^N\Ipdp{\bbm{g_j\\v_j}}{K_s(\omega_j,\zeta_j,\omega_k,\zeta_k)\bbm{g_k\\v_k}} &= \\
  \Ipdp{\Iscr^*\sum_{j=1}^Ne(\omega_j,\zeta_j)^*\bbm{g_j\\v_j}}{\bbm{1&\Gamma\\\Gamma^*&1}\Iscr^* \sum_{k=1}^Ne(\omega_k,\zeta_k)^*\bbm{g_k\\v_k}}&\geq0.
\end{aligned}
$$
\end{proof}
 
In order to fit into standard reproducing kernel Hilbert space (RKHS) theory, we can alternatively interpret $K_s(\mu,\mu_*,\lambda,\lambda_*)$ as a kernel function of two variables $\bmu := (\mu, \mu_{*})$ and $\blam := (\lambda, \lambda_{*})$, both in $\cplus \times \cplus$. Then our positive kernel function has the special $2 \times 2$-block form
 $$
   K_s(\bmu, \blam) = \begin{bmatrix} K_{11}(\mu, \lambda) & K_{12}(\mu, 
   \lambda_*) \\ K_{21}(\mu_*, \lambda) & K_{22}(\mu_*, \lambda_*) 
\end{bmatrix}.
 $$
 
 Elements of the corresponding RKHS, which we denote by $\Hscr_s$, are densely spanned by the kernel functions
 $ K_s(\cdot, \blam) \sbm{y \\ u}$, where $\blam$ sweeps $\cplus\times\cplus$ and $\sbm{ y \\ u }$ sweeps $\cY \oplus \cU$.  Note that 
 each such function is a column $\sbm{ f \\ g }(\bmu)$ of the form 
 $$
  \begin{bmatrix} f \\ g \end{bmatrix}(\bmu) = \begin{bmatrix} f(\mu) 
      \\ g(\mu_{*}) \end{bmatrix}
 $$
 where $f$ and $g$ are analytic on $\cplus$; therefore this property continues to hold for 
 all elements of $\Hscr_s$. By standard RKHS theory,  the reproducing property is
 \begin{equation}   \label{genreprod}
 \left\langle \begin{bmatrix} f \\ g \end{bmatrix}, K_s(\cdot, 
 \blam) \begin{bmatrix} \gamma \\ \nu \end{bmatrix} \right\rangle_{\cH(K_s)} = \left\langle 
 \begin{bmatrix} f \\ g \end{bmatrix}(\blam),  \begin{bmatrix} \gamma \\  \nu 
     \end{bmatrix} \right\rangle_{\sbm{\cY\\\cU}}.
 \end{equation}
Taking $\nu = 0$ and setting  (the first column of $K_s(\bmu, \blam)$)
$$  
 K_s^{(1)}(\bmu, \lambda) := \begin{bmatrix} K_{11}(\mu, \lambda) \\ 
   K_{21}(\mu_*, \lambda) \end{bmatrix} = \begin{bmatrix} \displaystyle \frac{1 - 
   \varphi(\mu) \,\varphi(\lambda)^*}{\mu + \overline{\lambda}} \\
  \displaystyle \frac{ \widetilde\varphi(\overline\lambda) - \widetilde\varphi(\mu_*)}{\mu_* - 
   \overline{\lambda}} \end{bmatrix}
$$
then gives
$$
 \left \langle \begin{bmatrix} f \\ g \end{bmatrix}, 
 K^{(1)}_s(\cdot, \lambda) \gamma \right\rangle_{\cH_s} =
 \langle f(\lambda), \gamma \rangle_{\cY}.
$$
Similarly, taking $\gamma=0$ gives
$$
\begin{aligned}
 \left \langle \begin{bmatrix} f \\ g \end{bmatrix}, 
 K^{(2)}_{s}(\cdot, \lambda_*) \nu 
 \right\rangle_{\cH_s} &= \langle g(\lambda_*), \nu 
 \rangle_{\cU},\qquad\text{where} \\
 K_s^{(2)}(\bmu, \lambda_*) := \begin{bmatrix} K_{12}(\mu, \lambda_*) 
 \\ K_{22}(\mu_*, \lambda_*) \end{bmatrix} &=
   \begin{bmatrix} \displaystyle 
   \frac{\varphi(\overline{\lambda_*})-\varphi(\mu)}{\mu - \overline{\lambda_*}} \\
     \displaystyle \frac{1 - \widetilde \varphi(\mu_*) \,\widetilde 
     \varphi(\lambda_*)^*}{\mu_* + \overline{\lambda_*}} 
 \end{bmatrix} \qquad \text{ (column two)},
\end{aligned}
$$
and the general case \eqref{genreprod} is the superposition of these two.

\begin{thm}\label{thm:HsStruct}
Let $W$ denote the positive semidefinite square root of $\sbm{1&\Gamma\\\Gamma^*&1}\in\Lscr\left(\sbm{\Hscr_o\\\Hscr_c}\right)$. Then:
\begin{enumerate}
\item 
We have $\Hscr_s=\range{W}\subset\sbm{\Hscr_o\\\Hscr_c}$ with the lifted norm
\begin{equation}\label{eq:rangechar}
  \|Wh\|_{\Hscr_s}=\|P_{\Ker W^\perp}h\|_{\sbm{\Hscr_o\\\Hscr_c}},\qquad h\in\bbm{\Hscr_o\\\Hscr_c}.
\end{equation}

\item The operators $\sbm{\Gamma\\1}:\Hscr_c\to\Hscr_s$ and $\sbm{1\\\Gamma^*}:\Hscr_o\to\Hscr_s$ are isometric.

\item Setting $\Rfrak_s^\dagger:=\sbm{1\\\Gamma^*}\Hscr_o$ and $\Rfrak_s:=\sbm{\Gamma\\1}\Hscr_c$, both with the norm of $\Hscr_s$, we obtain 
\begin{equation}\label{eq:Embeddings}
  \Rfrak_s^\dagger+\Rfrak_s\subset \Hscr_s\subset\bbm{\Hscr_o\\\Hscr_c}
\end{equation}
with the first embedding dense and the second continuous. The adjoint of the injection $\iota:\Hscr_s\to\sbm{\Hscr_o\\\Hscr_c}$ is 
\begin{equation}\label{eq:iotasadj}
  \iota^*=\bbm{1&\Gamma\\\Gamma^*&1}:\bbm{\Hscr_o\\\Hscr_c}\to\Hscr_s,
\end{equation}
and the reproducing kernel $K_s$ of $\Hscr_s$ has the representation
\begin{equation}\label{eq:KsIota}
  e_s(\lambda,\lambda_*)^*\bbm{\gamma\\\nu} = \bbm{1&\Gamma\\\Gamma^*&1} \bbm{e_o(\lambda)^*\gamma\\e_c(\lambda_*)^*\nu},
    \qquad \lambda,\lambda_*\in\cplus,\, \gamma\in\Yscr,\, \nu\in\Uscr.
\end{equation}  
  
\item The subspaces $\Rfrak_s^\dagger$ and $\Rfrak_s$ are closed, both in $\Hscr_s$ and in $\sbm{\Hscr_o\\\Hscr_c}$.

\item The following maps are co-isometries from $\Hscr_s$ onto $\Hscr_o$ and $\Hscr_c$, respectively:
$$
  \Pi_1:\bbm{f\\g}\mapsto f\qquad\text{and}\qquad \Pi_2:\bbm{f\\g}\mapsto g.
$$
The initial subspace of $\Pi_1$ is $\Rfrak_s^\dagger$ and the initial subspace of $\Pi_2$ is $\Rfrak_s$. The operators $\pi_1:=\Pi_1\big|_{\Rfrak_s^\dagger}:\Rfrak_s^\dagger\to\Hscr_o$ and $\pi_2:=\Pi_2\big|_{\Rfrak_s}:\Rfrak_s\to\Hscr_c$ are unitary. 

\item Denote by $P_{\Rfrak_s}$ and $P_{\Rfrak_s^\dagger}$ the orthogonal projections in $\Hscr_s$ onto $\Rfrak_s$ and $\Rfrak_s^\dagger$, respectively, and let $\Ufrak_s:=(\Rfrak_s^\dagger)^\perp$ and  $\Ufrak_s^\dagger:=(\Rfrak_s)^\perp$. Then
$$
\begin{aligned}
  \pi_1^* = \bbm{1\\\Gamma^*},\qquad \pi_2^*&=\bbm{\Gamma\\1},\qquad 
  \Gamma = \Pi_1\,\pi_2^*,\qquad \Gamma^*=\Pi_2\,\pi_1^*, \\
    P_{\Rfrak_s} &= \bbm{\Gamma\\1}\bbm{0&1}\Big|_{\Hscr_s}, \qquad
  P_{\Ufrak_s^\dagger} = \bbm{1\\0}\bbm{1&-\Gamma}\Big|_{\Hscr_s}, \\
  P_{\Rfrak_s^\dagger} &= \bbm{1\\\Gamma^*}\bbm{1&0}\Big|_{\Hscr_s},\qquad 
  P_{\Ufrak_s}=\bbm{0\\1}\bbm{-\Gamma^*&1}\Big|_{\Hscr_s}.
\end{aligned}
$$
\end{enumerate}
\end{thm}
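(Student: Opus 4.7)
The plan is to organize everything around the identification $\Hscr_s \simeq \range W$ already hinted at by item (1), and then let items (2)--(6) fall out as essentially algebraic manipulations with $W^2 = \sbm{1&\Gamma\\\Gamma^*&1}$. To establish (1), I would apply Lemma \ref{lem:multspace} with $\Omega = \cplus\times\cplus$, $\cX = \sbm{\Hscr_o\\\Hscr_c}$, $\cY = \sbm{\Yscr\\\Uscr}$, and $H(\bmu) := e(\bmu)\,\Iscr\, W$. The factorization \eqref{eq:kernfact} says exactly $K_s(\bmu,\blam) = H(\bmu)\,H(\blam)^*$, so Lemma \ref{lem:multspace} gives $\Hscr_s = \range{M_H}$ with the lifted norm. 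Since $\Iscr$ is injective and elements of $H^2(\cplus;\cdot)$ are determined by their values on $\cplus$, $\Ker M_H = \Ker W$, and identifying $M_H h$ with $Wh \in \sbm{\Hscr_o\\\Hscr_c}$ through point evaluation yields \eqref{eq:rangechar}.

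For (2), I would observe that $\sbm{\Gamma\\1}f = W^2\sbm{0\\f} = W(W\sbm{0\\f})$ with $W\sbm{0\\f} \in \Ker W^\perp$, so by \eqref{eq:rangechar},
$$\Bignorm{\bbm{\Gamma\\1}f}_{\Hscr_s}^2 = \Bignorm{W\bbm{0\\f}}_{\sbm{\Hscr_o\\\Hscr_c}}^2 = \Ipdp{\bbm{0\\f}}{W^2\bbm{0\\f}} = \|f\|_{\Hscr_c}^2,$$
and analogously for $\sbm{1\\\Gamma^*}$. For (3), $\Rfrak_s + \Rfrak_s^\dagger = \range W^2$ which is dense in $\range W = \Hscr_s$ in the lifted norm because $\overline{\range W} = (\Ker W)^\perp$, and the continuity of the inclusion into $\sbm{\Hscr_o\\\Hscr_c}$ comes from $\|Wh\|_{\sbm{\Hscr_o\\\Hscr_c}} \le \|W\|\,\|P_{\Ker W^\perp}h\| = \|W\|\,\|Wh\|_{\Hscr_s}$. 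The adjoint formula $\iota^* = W^2$ is a routine duality check: for $\xi = W\tilde\xi \in \Hscr_s$ with $\tilde\xi \in \Ker W^\perp$ and $h \in \sbm{\Hscr_o\\\Hscr_c}$, both $\langle\iota\xi,h\rangle_{\sbm{\Hscr_o\\\Hscr_c}}$ and $\langle\xi,W^2 h\rangle_{\Hscr_s}$ reduce to $\langle\tilde\xi,Wh\rangle$. Then \eqref{eq:KsIota} follows because $\iota^*_o\,e_+(\lambda)^*\gamma = e_o(\lambda)^*\gamma$ (by the reproducing property applied to an element of $\Hscr_o$ evaluated against $e_+(\lambda)^*\gamma \in H^2$), and similarly for $\iota^*_c$, combined with $\iota^* = W^2$.

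For (4), $\Rfrak_s^\dagger$ and $\Rfrak_s$ are closed in $\Hscr_s$ as isometric images of the complete spaces $\Hscr_o$ and $\Hscr_c$ by (2), and closed in $\sbm{\Hscr_o\\\Hscr_c}$ as graphs of the bounded operators $\Gamma^*$ and $\Gamma$. For (5) and (6), the key step is to identify $\Pi_1^* = \sbm{1\\\Gamma^*}$ and $\Pi_2^* = \sbm{\Gamma\\1}$. I would verify this by a direct computation: for $Wh \in \Hscr_s$ with $h\in\Ker W^\perp$ and $f\in\Hscr_o$, writing $\sbm{f\\\Gamma^*f} = W\,(W\sbm{f\\0})$ we obtain
$$\Bigipdp{Wh}{\bbm{1\\\Gamma^*}f}_{\Hscr_s} = \Bigipdp{h}{W\bbm{f\\0}}_{\sbm{\Hscr_o\\\Hscr_c}} = \bigipdp{(Wh)_1}{f}_{\Hscr_o} = \bigipdp{\Pi_1(Wh)}{f}_{\Hscr_o},$$
and similarly for $\Pi_2^*$. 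Once the adjoints are identified, co-isometry of $\Pi_j$ follows from isometry of $\Pi_j^*$, the initial spaces are their ranges $\Rfrak_s^\dagger$ and $\Rfrak_s$, and the restrictions $\pi_j$ are unitary as inverses of isometric bijections onto those ranges. The projection formulas in (6) are then $\Pi_j^*\Pi_j = P_{\range \Pi_j^*}$ and the complementary formulas $P_{\Ufrak_s^\dagger} = I - P_{\Rfrak_s}$, $P_{\Ufrak_s} = I - P_{\Rfrak_s^\dagger}$, which simplify to the stated matrix expressions upon substitution.

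The main obstacle will be the careful bookkeeping between the lifted inner product on $\Hscr_s$ and the ambient inner product on $\sbm{\Hscr_o\\\Hscr_c}$, in particular the polarization identity $\langle Wh_1,Wh_2\rangle_{\Hscr_s} = \langle h_1,h_2\rangle_{\sbm{\Hscr_o\\\Hscr_c}}$ for $h_1,h_2\in\Ker W^\perp$, which underlies nearly every computation; making sure throughout that each element of $\Hscr_s$ under consideration is expressed as $Wh$ with $h$ already projected onto $\Ker W^\perp$ before the lifted-norm formula is applied is the key technical point that must be handled uniformly.
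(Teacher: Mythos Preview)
Your proposal is correct and closely parallels the paper's proof for items (1)--(4); the main genuine difference is in your treatment of (5) and (6). The paper first shows $\Pi_2$ is onto by exhibiting $\Hscr_c$ in its range via $\Rfrak_s\subset\Hscr_s$, then proves $\pi_2$ unitary and identifies $\Ker{\Pi_2}=\Hscr_s\ominus\Rfrak_s$ using the reproducing property \eqref{eq:KsIota} applied to kernel functions, and only afterwards backs out $\pi_2^*=\sbm{\Gamma\\1}$ from $\pi_2^{-1}=\pi_2^*$. You instead compute $\Pi_j^*$ directly via the polarized lifted-norm identity and then read off everything else: co-isometry from $\Pi_j\Pi_j^*=1$ (since $\Pi_j^*$ is isometric by (2)), initial spaces as $\range{\Pi_j^*}$, unitarity of $\pi_j$, and the projection formulas as $\Pi_j^*\Pi_j$. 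Your route is more uniform and purely algebraic in $W$, avoiding any appeal to reproducing kernels in this part; the paper's route keeps the RKHS structure visible and gives the kernel identity \eqref{eq:KsKc} for free along the way, which it uses later. For (1), you invoke Lemma~\ref{lem:multspace} from the present paper while the paper refers to Thm.~I.2.4 from Part~I; these are essentially the same device. Your continuity bound $\|\iota\|\le\|W\|$ is the same as the paper's explicit $\|\iota\|\le\sqrt{2}$ once one notes $\|W^2\|\le 2$.
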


The spaces $\Ufrak_s$ and $\Rfrak_s$ will turn out to be the unobservable and approximately reachable subspaces of the conservative simple model, respectively. 

\begin{proof}[Proof of Thm.\ \ref{thm:HsStruct}]
Modifying the proof of Thm.\ I.2.4 slightly, we obtain \eqref{eq:rangechar} and \eqref{eq:iotasadj}. Due to \eqref{eq:rangechar}, $\sbm{\Gamma\\1}$ is isometric:
\begin{equation}\label{eq:Pi2isom}
\begin{aligned}
	\left\|\bbm{\Gamma x\\x}\right\|^2_{ \Hscr_s} &= \Ipdp{\bbm{1&\Gamma\\\Gamma^*&1}\bbm{0\\x}}
		{\bbm{1&\Gamma\\\Gamma^*&1}\bbm{0\\x}}_{\Hscr_s} \\
	&= \Ipdp{\bbm{0\\x}}{\bbm{1&\Gamma\\\Gamma^*&1}\bbm{0\\x}}_{\sbm{\Hscr_o\\\Hscr_c}}
	=\|x\|^2_{\Hscr_c},
\end{aligned}
\end{equation}
and the isometricity of $\sbm{1\\\Gamma^*}$ is proved the same way. 

Eq.\  \eqref{eq:KsIota} is only a restatement of \eqref{eq:kernelfact}. Trivially, $W^2\sbm{\Hscr_o\\\Hscr_c}\subset W\sbm{\Hscr_o\\\Hscr_c}\subset \sbm{\Hscr_o\\\Hscr_c}$, and this establishes \eqref{eq:Embeddings}, where the first embedding is dense, because the reproducing kernels of $\Hscr_s$ lie in $\Rfrak_s^\dagger+\Rfrak_s$ by \eqref{eq:KsIota}. Moreover, the norm of $\sbm{1&\Gamma\\\Gamma^*&1}$ as an operator on $\sbm{\Hscr_o\\\Hscr_c}$ is at most $2$: Using that $\Gamma$ is a contraction, Cauchy-Schwarz, and completion of squares, we obtain
$$
	\left\|\bbm{1&\Gamma\\\Gamma^*&1}\bbm{f\\g}\right\|^2 \leq
	2\|f\|^2+4\re\Ipdp{f}{\Gamma g}+2\|\Gamma g\|^2 \leq 4\big(\|f\|^2+\|g\|^2\big).
$$
For all $\sbm{f\\g}=Wh$ with $h\in\sbm{\Hscr_o\\\Hscr_c}\ominus \Ker W$ it then holds that
\begin{equation}\label{eq:contcont}
\begin{aligned}
  \left\|\iota \bbm{f\\g}\right\|_{\sbm{\Hscr_o\\\Hscr_c}}^2 
    &= \|Wh\|_{\sbm{\Hscr_o\\\Hscr_c}}^2 = \Ipdp{\bbm{1&\Gamma\\\Gamma^*&1}h}{h}_{\sbm{\Hscr_o\\\Hscr_c}}
      \leq 2 \left\|\bbm{f\\g}\right\|^2_{\Hscr_s},
\end{aligned}
\end{equation}
i.e., $\iota$ is continuous with norm at most $\sqrt 2$. Claims one to three are proved.

Since $\Gamma$ is bounded with a closed domain, it follows immediately that $\Rfrak_s$ is closed in $\sbm{\Hscr_o\\\Hscr_c}$, and by the isometricity of $\sbm{\Gamma\\1}$, $\Rfrak_s$ is closed also in $\Hscr_s$. Furthermore, it follows from $\Rfrak_s^\dagger+\Rfrak_s\subset \Hscr_s$ that $\Pi_2$ and $\pi_2$ are onto $\Hscr_c$:
\begin{equation}\label{eq:PiOnto}
  \Pi_2\,\Hscr_s\supset
  \pi_2\bbm{\Gamma\\1}\Hscr_c=\Hscr_c.
\end{equation}
An analogous argument shows that $\Rfrak_s^\dagger$ is closed and $\Pi_1\,\Hscr_s=\Hscr_o$.

The operator $\Pi_2\big|_{\Rfrak_s}=\pi_2:\Rfrak_s\to\Hscr_c$ is unitary by \eqref{eq:Pi2isom} and \eqref{eq:PiOnto}. Due to \eqref{eq:KsIota} and the isometricity of $\sbm{\Gamma\\1}$, the space $\Rfrak_s$ is the closed linear span of the kernel functions 
\begin{equation}\label{eq:KsKc}
  \bbm{\Gamma\\1}K_c(\cdot,\lambda_*)\,\nu=e_s(\lambda,\lambda_*)^*\bbm{0\\\nu},\quad\lambda_*\in\cplus,\,\nu\in\Uscr,
\end{equation}
($\lambda\in\cplus$ is insignificant) and this implies that $\Ker{\Pi_2}=\Hscr_s\ominus\Rfrak_s$:
\begin{equation}\label{eq:kernelsdenseR}
\begin{aligned}
  0=&\Ipdp{\bbm{f\\g}}{e_s(\lambda,\lambda_*)^*\bbm{0\\\nu}}_{\Hscr_s}=\Ipdp{g(\lambda_*)}{\nu}_\Uscr,\quad \lambda_*\in\cplus,\, \nu\in\Uscr \\
  &\qquad \iff \quad g=0.
\end{aligned}
\end{equation}
Splitting $\Hscr_s=\sbm{\Rfrak_s \\ \Rfrak_s^\perp}$, we thus obtain $\Pi_2=\bbm{\pi_2&0}$, and furthermore, by the unitarity of $\pi_2$:
$$
	\Pi_2\,\Pi_2^*=\bbm{\pi_2&0}\bbm{\pi_2^*\\0}=1.
$$
Hence, the operator $\Pi_2$ is a co-isometry with initial space $\Rfrak_s$ (and final space $\Hscr_c$).

The unitarity of $\pi_2$ implies that $\pi_2^*=\pi_2^{-1}$ and this operator equals $\sbm{\Gamma\\1}$, because $\pi_2\sbm{\Gamma\\1}x=x$ for all $x\in\Hscr_c$; premultiply by $\pi_2^*$. Then the formula $\Gamma=\Pi_1\pi_2^*$ trivially follows. Moreover, $P_{\Rfrak_s}=\pi_2^*\,\Pi_2$, because $(\pi_2^*\,\Pi_2)^2 = \pi_2^*\,\Pi_2$, $\range{\pi_2^*\,\Pi_2}=\pi_2^* \Hscr_c=\Rfrak_s$, and $\Ker{\pi_2^*\,\Pi_2}=\Ker{\Pi_2}=\Hscr_s\ominus\Rfrak_s$. Then 
$$
	P_{\Ufrak_s^\dagger}=\bbm{1&0\\0&1} -\bbm{0&\Gamma\\0&1}=\bbm{1\\0}\bbm{1&-\Gamma}.
$$

The claims on $\Pi_1$, $\pi_1$, $\Gamma^*$, $P_{\Rfrak_s^\dagger}$, and $P_{\Ufrak_s}$ are proved in the same way.
\end{proof}

We will need the following extension of Prop.\ I.2.6:

\begin{cor}\label{cor:xtendstozero}
Every $\sbm{f\\g}\in\Hscr_s$ satisfies $f(\mu)\to0$ in $\Yscr$ as $\re \mu\to+\infty$ and $g(\mu_*)\to 0$ in $\Uscr$ as $\re\mu_*\to+\infty$. More precisely, for all $\sbm{f\\g}\in\Hscr_s$ and $\mu,\mu_*\in\cplus$:
\begin{equation}\label{eq:H2Zspeed}
  \left\|f(\mu)\right\|_\Yscr \leq \frac{ \sqrt2\left\|\sbm{f\\g}\right\|_{\Hscr_s}}{\sqrt {2\re\mu}}\qquad\text{and}\qquad 
  \left\|g(\mu_*)\right\|_\Uscr \leq \frac{\sqrt2\left\|\sbm{f\\g}\right\|_{\Hscr_s}}{\sqrt {2\re\mu_*}}.
\end{equation}
\end{cor}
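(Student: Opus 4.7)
The plan is to reduce the estimate to the analogous one-variable point-evaluation estimate for elements of $\Hscr_o$ and $\Hscr_c$ established in Prop.\ I.2.6, using the continuous embedding $\iota:\Hscr_s\to\sbm{\Hscr_o\\\Hscr_c}$ already constructed during the proof of Thm.\ \ref{thm:HsStruct}.

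First, I would quote \eqref{eq:contcont}: for every $\sbm{f\\g}\in\Hscr_s$ one has
$$
 \|f\|_{\Hscr_o}^2 + \|g\|_{\Hscr_c}^2 \;=\; \Bignorm{\bbm{f\\g}}^2_{\sbm{\Hscr_o\\\Hscr_c}} \;\leq\; 2\Bignorm{\bbm{f\\g}}^2_{\Hscr_s},
$$
so in particular $\|f\|_{\Hscr_o}\leq\sqrt2\,\|\sbm{f\\g}\|_{\Hscr_s}$ and $\|g\|_{\Hscr_c}\leq\sqrt2\,\|\sbm{f\\g}\|_{\Hscr_s}$. In other words, the component projections from $\Hscr_s$ into $\Hscr_o$ and $\Hscr_c$ are continuous with norm at most $\sqrt2$.

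Next, apply Prop.\ I.2.6 to $f\in\Hscr_o$ and $g\in\Hscr_c$ separately. That proposition provides the RKHS point-evaluation bound
$$
 \|f(\mu)\|_\Yscr \leq \frac{\|f\|_{\Hscr_o}}{\sqrt{2\re\mu}},\qquad
 \|g(\mu_*)\|_\Uscr \leq \frac{\|g\|_{\Hscr_c}}{\sqrt{2\re\mu_*}},
$$
which is the standard consequence of the reproducing property combined with $\|K_o(\mu,\mu)\|\leq 1/(2\re\mu)$ (since $\varphi(\mu)\varphi(\mu)^*\leq 1$ by the Schur property, so $\|1-\varphi(\mu)\varphi(\mu)^*\|\leq 1$), and analogously for $K_c$. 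Substituting the norm bounds from the previous step yields \eqref{eq:H2Zspeed} with the stated constants $\sqrt2$. Letting $\re\mu\to+\infty$ (respectively $\re\mu_*\to+\infty$) then gives the decay statements.

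No step here is an obstacle: the only small bookkeeping point is tracking the factor $\sqrt2$ coming from the norm of the injection $\iota$ in \eqref{eq:contcont} (one could bypass this and get a sharper constant via a direct computation of $\|K_s^{(1)}(\cdot,\mu)\gamma\|_{\Hscr_s}$ using reproducing and $K_{11}(\mu,\mu)=(1-\varphi(\mu)\varphi(\mu)^*)/(2\re\mu)$, but the reduction above is the cleaner route, since it reuses machinery already set up in Part I and in Thm.\ \ref{thm:HsStruct}).
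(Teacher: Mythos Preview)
Your proposal is correct and follows essentially the same route as the paper: combine the bound \eqref{eq:contcont} for the injection $\iota:\Hscr_s\to\sbm{\Hscr_o\\\Hscr_c}$ with the point-evaluation estimate in $\Hscr_o$ (resp.\ $\Hscr_c$). The only cosmetic difference is that the paper obtains $\|f(\mu)\|_\Yscr\le \|f\|_{\Hscr_o}/\sqrt{2\re\mu}$ by first invoking the $H^2$ point-evaluation bound from Prop.~I.2.6 and then the contractive inclusion $\Hscr_o\hookrightarrow H^2(\cplus;\Yscr)$ from Thm.~I.2.4.2, whereas you justify the same inequality directly from the reproducing kernel via $\|K_o(\mu,\mu)\|\le 1/(2\re\mu)$.
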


\begin{proof}
By Prop.\ I.2.6 and Thm.\ I.2.4.2, for all $f\in\Hscr_o$ and $g\in\Hscr_c$:
$$
 \|f(\mu)\|_\Yscr \leq \frac{\|f\|_{H^2(\cplus;\Yscr)}}{\sqrt {2\re\mu}} \leq \frac{\|f\|_{\Hscr_o}}{\sqrt {2\re\mu}}
    \leq \frac{\left\|\sbm{f\\g}\right\|_{\sbm{\Hscr_o\\\Hscr_c}}}{\sqrt {2\re\mu}},\quad \mu\in\cplus.
$$
Restricting to $\Hscr_s$ and combining this with \eqref{eq:contcont} completes the argument for $f$, and $g$ is handled the same way.
\end{proof}

We end the section with an analogue of Thm.\ \ref{thm:iomaps}. It is needed for our uniqueness result for the conservative simple model, which is a variation on Thm.\ \ref{thm:sharperint}. Inspired by \cite[\S10]{ArKuStCanon}, we define the \emph{(frequency-domain) bilateral input map} of a \emph{passive} system $\SmallSysNode$ with state space $\Xscr$ as the mapping
\begin{equation}\label{eq:BfullDef}
  \Bfrak_{bil}:\bbm{1&\Gamma\\\Gamma^*&1}\bbm{f\\g} \mapsto \bbm{\Cfrak^*&\Bfrak}\bbm{f\\g},\qquad\bbm{f\\g}\in\bbm{\Hscr_o\\\Hscr_c},
\end{equation}
as a first step defined on the dense subspace $\sbm{1&\Gamma\\\Gamma^*&1}\sbm{\Hscr_o\\\Hscr_c}$ of $\Hscr_s$ with range in $\Xscr$. We shall in a moment prove that this mapping can be extended to a contraction $\Hscr_s\to\Xscr$, and its contractive adjoint is called the \emph{(frequency-domain) bilateral output map} 
\begin{equation}\label{eq:CfrakFull}
  \Cfrak_{bil}:=\Bfrak_{bil}^*:\Xscr\to\Hscr_s.
\end{equation}

\begin{thm}\label{thm:bilIO} 
Let $\SmallSysNode$ be a passive system node with transfer function $\varphi$. 
\begin{enumerate}
\item The bilateral input map $\Bfrak_{bil}$ in \eqref{eq:BfullDef} is a contraction $\Hscr_s\to\Xscr$ and $\crange{\Bfrak_{bil}}=\overline{\Rfrak+\Rfrak^\dagger}$, where the closure is in $\Xscr$. Hence, $\SmallSysNode$ is simple if and only if $\Bfrak_{bil}$ has dense range which holds if and only if $\Cfrak_{bil}$ is injective.

\item Denoting the inverse of the injection $\iota:\Hscr_s\to\sbm{\Hscr_o\\\Hscr_c}$ with domain $\iota\,\Hscr$ by $\iota^{-1}$, the bilateral output map can be written more explicitly as
$$
  \Cfrak_{bil}=\iota^{-1}\bbm{\Cfrak\\\Bfrak^*},
$$
i.e, $\Cfrak_{bil}$ is $\sbm{\Cfrak\\\Bfrak^*}$ with range space $\Hscr_s$ rather than $\sbm{\Hscr_o\\\Hscr_c}$. 

\item The bilateral input map has the following action on the kernel functions of $\Hscr_s$: 
\begin{equation}\label{eq:BfullKern}
  \Bfrak_{bil}\, e_s(\lambda,\lambda_*)^*\bbm{\gamma \\ \nu}
    = (\overline\lambda-A^d_{-1})^{-1}C^* \,\gamma +(\overline{\lambda_*}-A_{-1})^{-1} B\, \nu,
\end{equation}
$\lambda,\lambda_*\in\cplus,\, \gamma\in\Yscr,\,\nu\in\Uscr$, and in the notation of Prop.\ \ref{prop:kolmogorov}.3:
\begin{equation}\label{eq:CfullKern}
\begin{aligned}
  \Cfrak_{bil}\,\left( (\overline\lambda-A^d_{-1})^{-1}C^* \,\gamma +(\overline{\lambda_*}-A_{-1})^{-1} B\, \nu\right)& = \\ 
  \iota^{-1}\left((\mu,\mu_*)\mapsto \bbm{H(\mu)\\G(\mu_*)}\bbm{H(\lambda)^*&G(\lambda_*)^*}\bbm{\gamma \\ \nu}
  \right).\end{aligned}
\end{equation}

\item The operator $\Cfrak_{bil}$ maps $\overline{\Rfrak+\Rfrak^\dagger}$ unitarily onto $\Hscr_{\Cfrak_{bil}}$, the RKHS with reproducing kernel
$$
  K_{\Cfrak_{bil}}(\mu,\mu_*,\lambda,\lambda_*):=
  \bbm{H(\mu)\\G(\mu_*)}\bbm{H(\lambda)^*&G(\lambda_*)^*},
  \quad \mu,\mu_*,\lambda,\lambda_*\in\cplus.
$$
We have the alternative characterization
$$
	\Hscr_{\Cfrak_{bil}}=\range{\Cfrak_{bil}}
	\qquad\text{with lifted norm}\qquad
	\|\Cfrak_{bil}\,x\|_{\Hscr_{\Cfrak_{bil}}}=
	\|P_{\overline{\Rfrak+\Rfrak^\dagger}}\,x\|_\Xscr.
$$

\item Assume that $\SmallSysNode$ is conservative. Then $\Hscr_{\Cfrak_{bil}}=\Hscr_s$ and $\Bfrak_{bil}$ maps $\Hscr_s$ isometrically into $\Xscr$, unitarily onto $\overline{\Rfrak+\Rfrak^\dagger}$. Moreover, $\Cfrak^*\Gamma\Bfrak^*\big|_\Rfrak=P_{\Rfrak^\dagger}\big|_\Rfrak$
\end{enumerate}
\end{thm}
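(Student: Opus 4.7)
The skeleton of my proof is the tautology $\Bfrak_{bil}\,\iota^* = \bbm{\Cfrak^* & \Bfrak}$ on $\sbm{\Hscr_o\\\Hscr_c}$, which is immediate from \eqref{eq:BfullDef} and the formula $\iota^* = \sbm{1&\Gamma\\\Gamma^*&1}$ in Thm.\ \ref{thm:HsStruct}. Because $\iota^*$ has dense range in $\Hscr_s$, this determines $\Bfrak_{bil}$ on a dense subspace, and contractivity reduces to the block-operator inequality
$$
  \bbm{\Cfrak\Cfrak^* & \Gamma \\ \Gamma^* & \Bfrak^*\Bfrak} \leq \bbm{1 & \Gamma \\ \Gamma^* & 1}
  \qquad\text{on}\qquad \bbm{\Hscr_o\\\Hscr_c},
$$
which after subtracting (using $\Gamma = \Cfrak\Bfrak$) amounts to $\sbm{1-\Cfrak\Cfrak^*&0\\0&1-\Bfrak^*\Bfrak}\geq 0$, valid since $\Cfrak$ and $\Bfrak$ are contractions by Thm.\ \ref{thm:iomaps}. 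Evaluating both sides at $\sbm{f\\g}$ gives $\|\Bfrak_{bil}\iota^*\sbm{f\\g}\|^2_\Xscr \leq \|\iota^*\sbm{f\\g}\|^2_{\Hscr_s}$, so $\Bfrak_{bil}$ extends by continuity. For the range identity in (1), taking $g=0$ (resp.\ $f=0$) in \eqref{eq:BfullDef} puts $\range\Cfrak^*$ (resp.\ $\range\Bfrak$) inside $\range\Bfrak_{bil}$, while conversely every $\sbm{\Cfrak^* & \Bfrak}\sbm{f\\g}$ lies in $\range\Cfrak^*+\range\Bfrak$; thus $\crange{\Bfrak_{bil}} = \overline{\Rfrak^\dagger+\Rfrak}$, which by \eqref{eq:dualio} and \eqref{eq:inputaction} coincides with the closed span in Def.\ \ref{def:simple}. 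The simplicity equivalences then follow from $\Cfrak_{bil} = \Bfrak_{bil}^*$.

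For (2) I take adjoints of the key identity to get $\iota\Cfrak_{bil} = \sbm{\Cfrak\\\Bfrak^*}$ as operators $\Xscr \to \sbm{\Hscr_o\\\Hscr_c}$; since $\Cfrak_{bil}$ maps into $\Hscr_s$ by definition, $\Cfrak_{bil} = \iota^{-1}\sbm{\Cfrak\\\Bfrak^*}$ is well-defined. The kernel formulas (3) then follow by substituting $\sbm{f\\g} = \sbm{e_o(\lambda)^*\gamma\\e_c(\lambda_*)^*\nu}$ into $\Bfrak_{bil}\iota^*\sbm{f\\g} = \sbm{\Cfrak^* & \Bfrak}\sbm{f\\g}$, invoking \eqref{eq:KsIota} together with \eqref{eq:dualio}, \eqref{eq:inputaction}, and the identities $H(\lambda)^* = (\overline\lambda-A^d_{-1})^{-1}C^*$, $G(\lambda_*)^* = (\overline{\lambda_*}-A_{-1})^{-1}B$ coming from \eqref{eq:HGdef}. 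For (4) I apply Lemma \ref{lem:multspace} on $\Omega = \cplus\times\cplus$ to the function $(\mu,\mu_*)\mapsto \sbm{H(\mu)\\G(\mu_*)}$: by (2) its multiplication operator equals $\iota\Cfrak_{bil}$, and its kernel is $\Ker\Cfrak\cap\Ker\Bfrak^* = \Ufrak\cap\Ufrak^\dagger = (\Rfrak+\Rfrak^\dagger)^\perp$. Lemma \ref{lem:multspace} identifies $\overline{\Rfrak+\Rfrak^\dagger}$ unitarily with the RKHS carrying kernel $K_{\Cfrak_{bil}}$ and the lifted norm; transferring through the injective $\iota$ yields the claim for $\Cfrak_{bil}$ and the lifted-norm characterization of $\Hscr_{\Cfrak_{bil}}$.

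Finally, (5) rests on Prop.\ \ref{prop:kolmogorov}.3: in the conservative case the kernel $K_s$ factorizes as $K_{\Cfrak_{bil}}$, so the two RKHS have the same reproducing kernel and therefore coincide, $\Hscr_{\Cfrak_{bil}} = \Hscr_s$. Combined with (4), $\Cfrak_{bil}$ is then unitary from $\overline{\Rfrak+\Rfrak^\dagger}$ onto $\Hscr_s$; dualizing gives the isometry statement for $\Bfrak_{bil}$. For the projection identity, Prop.\ \ref{prop:kolmogorov}.1--2 guarantees in the conservative case that both $\Cfrak\big|_{\Rfrak^\dagger}$ and $\Bfrak:\Hscr_c\to\Rfrak$ are unitary, whence $\Cfrak^*\Cfrak = P_{\Rfrak^\dagger}$ and $\Bfrak\Bfrak^* = P_\Rfrak$; so $\Cfrak^*\Gamma\Bfrak^* = \Cfrak^*\Cfrak\Bfrak\Bfrak^* = P_{\Rfrak^\dagger}P_\Rfrak$, which on $\Rfrak$ reduces to $P_{\Rfrak^\dagger}\big|_\Rfrak$. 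The main technical obstacle I anticipate is the careful bookkeeping of the two distinct norms on $\Hscr_s$ versus $\iota\Hscr_s \subset \sbm{\Hscr_o\\\Hscr_c}$: since $\iota$ is a continuous injection of norm at most $\sqrt 2$ rather than an isometry, the lifted-norm identifications in (4) and the matching with the intrinsic $\Hscr_s$-norm used in (5) have to be tracked cleanly, and the uniqueness of the RKHS associated with a given reproducing kernel must be used to reconcile them.
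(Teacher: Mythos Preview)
Your proof is correct and follows essentially the same route as the paper: the key identity $\Bfrak_{bil}\,\iota^*=\bbm{\Cfrak^*&\Bfrak}$, the contractivity via $\sbm{\Cfrak\Cfrak^*&\Gamma\\\Gamma^*&\Bfrak^*\Bfrak}\le\sbm{1&\Gamma\\\Gamma^*&1}$, the application of Lemma~\ref{lem:multspace} for assertion~4, and Prop.~\ref{prop:kolmogorov} for assertion~5 all match the paper's argument. The only cosmetic difference is that for $\crange{\Bfrak_{bil}}=\overline{\Rfrak+\Rfrak^\dagger}$ you argue directly on ranges whereas the paper computes $\Ker{\Cfrak_{bil}}=\Ufrak\cap\Ufrak^\dagger$ and takes orthogonal complements; both are equally short.
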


\begin{proof}
For every $h\in\sbm{\Hscr_o\\\Hscr_c}$, we have (using \eqref{eq:BfullDef}, $\Gamma=\Cfrak\,\Bfrak$ with $\Cfrak$ and $\Bfrak$ contractive, and \eqref{eq:rangechar}):
$$
\begin{aligned}
  \left\|\Bfrak_{bil}\bbm{1&\Gamma\\\Gamma^*&1}h\right\|_\Xscr^2 
  &= \Ipdp{\bbm{\Cfrak\\\Bfrak^*} \bbm{\Cfrak^*&\Bfrak}h}{h}_{\sbm{\Hscr_o\\\Hscr_c}}
  \leq \Ipdp{\bbm{1&\Gamma\\\Gamma^*&1}h}{h}_{\sbm{\Hscr_o\\\Hscr_c}} \\
 & =  \left\|\bbm{1&\Gamma\\\Gamma^*&1}h\right\|_{\Hscr_s}^2;
\end{aligned}
$$
hence $\Bfrak_{bil}$ is contractive on $\range{\sbm{1&\Gamma\\\Gamma^*&1}}$ which is dense in $\Hscr_s$. By \eqref{eq:iotasadj} and \eqref{eq:BfullDef} it holds that $\Bfrak_{bil}\,\iota^*=\bbm{\Cfrak^*&\Bfrak}:\sbm{\Hscr_o\\\Hscr_c}\to\Xscr$, and this implies that 
$$
  \iota\,\Cfrak_{bil} = (\Bfrak_{bil}\,\iota^*)^* = \bbm{\Cfrak\\\Bfrak^*}.
$$
From here it immediately follows that 
$$
\begin{aligned}
	\ker{\Cfrak_{bil}} &= \ker{\Cfrak}\cap\ker{\Bfrak^*}=\Ufrak\cap\Ufrak^\dagger,\qquad\text{so that} \\ 
	\crange{\Bfrak_{bil}} &= (\Ufrak\cap\Ufrak^\dagger)^\perp=\overline{\Rfrak^\dagger+\Rfrak}.
\end{aligned}
$$

Formula \eqref{eq:BfullKern} is established via \eqref{eq:BfullDef}, \eqref{eq:KsIota}, \eqref{eq:inputaction}, and \eqref{eq:outputaction}:
$$
\begin{aligned}
	  \Bfrak_{bil}\, e_s(\lambda,\lambda_*)^*\bbm{\gamma \\ \nu} 
	&= \bbm{\Cfrak^*&\Bfrak}\bbm{e_o(\lambda)^*\,\gamma \\ e_c(\lambda_*)^*\,\nu} \\
    &= (\overline\lambda-A^d_{-1})^{-1}C^* \,\gamma +(\overline{\lambda_*}-A_{-1})^{-1} B\, \nu;
\end{aligned}
$$
compare this to Def.\ \ref{def:simple} to obtain the characterizations of simplicity. Finally, \eqref{eq:dualio} and \eqref{eq:HGdef} give \eqref{eq:CfullKern}; the factor $\iota^{-1}$ emphasizes the fact that $\Cfrak_{bil}$ maps into $\Hscr_s$ rather than $\sbm{\Hscr_o\\\Hscr_c}$. This completes the proof of assertions one to three.

Assertion four follows from Lem.\ \ref{lem:multspace} upon observing that \eqref{eq:outputaction}, \eqref{eq:dualio}, and \eqref{eq:HGdef} imply that 
$$
	\Cfrak_{bil}\,x=(\mu,\mu_*)\mapsto\bbm{H(\mu)\,x\\G(\mu_*)\,x},
	\qquad x\in\Xscr,
$$
and that by the above, $\Ker{\Cfrak_{bil}}^\perp=\overline{\Rfrak+\Rfrak^\dagger}$. For the rest of the proof, we assume that $\SmallSysNode$ is conservative. Then Prop.\ \ref{prop:kolmogorov}.3 gives that $K_{\Cfrak_{bil}}=K_s$ and by assertion 4, $\Cfrak_{bil}$ maps $\overline{\Rfrak+\Rfrak^\dagger}$ (which is isometrically contained in $\Xscr$) unitarily onto $\Hscr_s$; then $\Bfrak_{bil}=\Cfrak_{bil}^*$ maps $\Hscr_s$ isometrically into $\Xscr$, with $\range{\Bfrak_{bil}}=\overline{\Rfrak+\Rfrak^\dagger}$ by the above; now $\Bfrak_{bil}$ has closed range because it is isometric with a closed domain. By Prop.\ \ref{prop:kolmogorov}, $\Cfrak^*$ and $\Bfrak$ are both isometric into $\Xscr$; hence $\Cfrak^*\Cfrak=P_{\Rfrak^\dagger}$ and $\Bfrak\Bfrak^*=P_{\Rfrak}$. Combining this with the Def.\ \ref{def:pfmap} of $\Gamma$ gives $\Cfrak^*\Gamma\Bfrak^*\big|_\Rfrak=P_{\Rfrak^\dagger}\big|_\Rfrak$.
\end{proof}

We remark that $\Cfrak^*\Gamma\Bfrak^*\big|_\Rfrak=P_{\Rfrak^\dagger}\big|_\Rfrak$ for a conservative system node means that  $\Gamma=P_{\Rfrak^\dagger}\big|_\Rfrak$ if we make the unitary identification of $y\in\Hscr_o$ with $\Cfrak^* y$ and, similarly, we identify $x\in\Rfrak$ with $\Bfrak^*x$.

\section{The conservative simple functional model}\label{sec:consmodel}

We construct the conservative simple realization in the following way; cf.\ Lem.\ I.4.1:

\begin{prop}\label{prop:consreal}
Let $\varphi\in\Sscr(\cplus;\Uscr,\Yscr)$ and let $\Hscr_s$ be the Hilbert space with reproducing kernel \eqref{eq:conskernelexp}. The mapping
\begin{equation}\label{eq:SsDefprel}
\begin{aligned}
  \SysNode_s:&\bbm{e_s(\lambda,{\lambda_*})^*\\\bbm{\widetilde\varphi(\overline\lambda)&1}}\bbm{\gamma\\\nu}\mapsto
  \bbm{e_s(\lambda,{\lambda_*})^*\bbm{-\overline\lambda&0\\0&\overline{\lambda_*}}\\\bbm{1&\varphi(\overline{\lambda_*})}} \bbm{\gamma\\\nu}, \\
  &\qquad \lambda,\lambda_*\in\cplus,\,\gamma\in\Yscr,\,\nu\in\Uscr,
\end{aligned}
\end{equation}
extends via linearity and operator closure to define a scattering-isometric closed linear operator $\SmallSysNode_s:\sbm{\Xscr\\\Uscr}\supset\dom{\SmallSysNode_s}\to\sbm{\Xscr\\\Yscr}$.
\end{prop}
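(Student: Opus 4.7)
My plan is to use a ``lurking isometry''-style identity on generators, then extend by linearity and take the closure. For two generators of the form $\sbm{x\\u}=\sbm{e_s(\lambda,\lambda_*)^*\\\sbm{\widetilde\varphi(\overline\lambda)&1}}\sbm{\gamma\\\nu}$ with prescribed image $\sbm{z\\y}=\sbm{e_s(\lambda,\lambda_*)^*\sbm{-\overline\lambda&0\\0&\overline{\lambda_*}}\\\sbm{1&\varphi(\overline{\lambda_*})}}\sbm{\gamma\\\nu}$ and an analogous primed pair, the key identity to establish is
\begin{equation*}
\Ipdp{z}{x'}_{\Hscr_s}+\Ipdp{x}{z'}_{\Hscr_s}=\Ipdp{u}{u'}_\Uscr-\Ipdp{y}{y'}_\Yscr.
\end{equation*}
Using the reproducing property $e_s(\lambda',\lambda_*')\,e_s(\lambda,\lambda_*)^*=K_s(\lambda',\lambda_*',\lambda,\lambda_*)=:K$, both sides become sesquilinear forms in $\sbm{\gamma\\\nu}$ and $\sbm{\gamma'\\\nu'}$, and the identity reduces to the pointwise matrix equality
\begin{equation*}
K\bbm{-\overline\lambda&0\\0&\overline{\lambda_*}}+\bbm{-\lambda'&0\\0&\lambda_*'}K=\bbm{\varphi(\lambda')\\1}\bbm{\varphi(\lambda)^*&1}-\bbm{1\\\varphi(\overline{\lambda_*'})^*}\bbm{1&\varphi(\overline{\lambda_*})}.
\end{equation*}
Entry-by-entry, the scalar prefactors multiplying $K_{ij}$ on the left cancel precisely the denominators in the explicit form \eqref{eq:conskernelexp} of $K$, producing the right-hand matrix. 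This verification is the main technical obstacle, as signs and conjugations in \eqref{eq:conskernelexp} must line up correctly.

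Extending by bilinearity, the identity holds for all finite linear combinations of generators. Well-definedness on this span follows next: if $\sum_i c_i x_i=0$ in $\Hscr_s$ and $\sum_i c_i u_i=0$ in $\Uscr$, I pair $\sum_i c_i\sbm{z_i\\y_i}$ against a generic generator and use the reproducing property to obtain
\begin{equation*}
\Ipdp{(\sum_i c_i z_i)(\lambda',\lambda_*')}{\sbm{\gamma'\\\nu'}}_{\Yscr\oplus\Uscr}=-\Ipdp{\sum_i c_i y_i}{\gamma'+\varphi(\overline{\lambda_*'})\nu'}_\Yscr.
\end{equation*}
Setting $\nu'=0$ shows that the first component of $\sum_i c_i z_i\in\Hscr_s$ is constantly $-\sum_i c_i y_i$; by Cor.\ \ref{cor:xtendstozero} this component must decay to zero as $\re\lambda'\to+\infty$, forcing $\sum_i c_i y_i=0$ and hence the first component itself vanishes. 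Setting $\gamma'=0$ then forces the second component to vanish, so $\sum_i c_i z_i=0$.

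Specializing the bilinear identity to $\sbm{x'\\u'}=\sbm{x\\u}$ yields the scattering-isometric balance $2\re\Ipdp{z}{x}_{\Hscr_s}=\|u\|_\Uscr^2-\|y\|_\Yscr^2$ on the span of generators. For closability I would repeat the above asymptotic argument with convergent sequences: if $\sbm{x_n\\u_n}\to 0$ and $\SysNode_s\sbm{x_n\\u_n}\to\sbm{z\\y}$, the bilinear identity passes to the limit against any fixed generator, giving $\Ipdp{z}{x'}_{\Hscr_s}=-\Ipdp{y}{y'}_\Yscr$, and Cor.\ \ref{cor:xtendstozero} once more forces first $y=0$ and then $z=0$. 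The operator is therefore closable, and its closure inherits the scattering-isometric property by continuity of the inner product.
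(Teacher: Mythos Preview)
Your proof is correct and follows the same route as the paper: you establish the polarized scattering-isometric identity on generators (your displayed matrix equality is precisely the paper's \eqref{eq:consenpres}), then extend by linearity and closure. The paper defers the well-definedness and closability details to Lemma~I.4.1 from Part~I together with the density remark that $\sbm{z_1\\z_2}\perp x'$ for all generator states $x'$ forces $\sbm{z_1\\z_2}=0$; you instead spell these steps out explicitly using the reproducing property and the decay estimate of Cor.~\ref{cor:xtendstozero}, which is a clean and self-contained variant of the same argument.
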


\begin{proof}
From $e_s(\mu,{\mu_*})e_s(\lambda,{\lambda_*})^*=K_s(\mu,{\mu_*},\lambda,{\lambda_*})$ and \eqref{eq:conskernelexp}, we obtain the following:
$$
\begin{aligned}
  &\bbm{-\mu&0\\0&{\mu_*}}e_s(\mu,\mu_*)\,e_s(\lambda,{\lambda_*})^*+e_s(\mu,{\mu_*})\,e_s(\lambda,{\lambda_*})^*\bbm{-\overline\lambda&0\\0&\overline{\lambda_*}} \\
  &\quad = \bbm{-\mu&0\\0&{\mu_*}}K_s(\mu,{\mu_*},\lambda,{\lambda_*})+K_s(\mu,{\mu_*},\lambda,{\lambda_*})\bbm{-\overline\lambda&0\\0&\overline{\lambda_*}} \\
  &\quad = \bbm{\varphi(\mu)\varphi(\lambda)^*-1 & \varphi(\mu)-\varphi(\overline{\lambda_*}) \\ 
    \widetilde\varphi(\overline\lambda)-\widetilde\varphi({\mu_*}) & 1-\widetilde\varphi({\mu_*})\widetilde\varphi({\lambda_*})^*} \\
  &\quad = \bbm{1&\widetilde\varphi(\overline\mu)^*\\\varphi(\overline{\mu_*})^*&1} \bbm{-1&0\\0&1} \bbm{1&\varphi(\overline{\lambda_*})\\\widetilde\varphi(\overline\lambda)&1},
\end{aligned}
$$
i.e., for all $\nu,\eta\in\Uscr$, $\gamma,\xi\in\Yscr$, and $\mu,\mu_*,\lambda,\lambda_*\in\cplus$:
\begin{equation}\label{eq:consenpres}
\begin{aligned}
  &\Ipdp{e_s(\lambda,{\lambda_*})^*\bbm{\gamma\\\nu}}{e_s(\mu,\mu_*)^*
  	\bbm{-\overline\mu&0\\0&\overline{\mu_*}}\bbm{\xi\\\eta}}_{\Hscr_s} \\
  &\qquad \qquad + \Ipdp{e_s(\lambda,{\lambda_*})^*\bbm{-\overline\lambda&0\\0&\overline{\lambda_*}}\bbm{\gamma\\\nu}}{e_s(\mu,\mu_*)^*\bbm{\xi\\\eta}}_{\Hscr_s} \\
  &\quad =\Ipdp{\bbm{-1&0\\0&1}\bbm{1&\varphi(\overline{\lambda_*}) \\
    \widetilde\varphi(\overline\lambda)&1}\bbm{\gamma\\\nu}}{\bbm{1&\varphi(\overline{\mu_*})\\\widetilde\varphi(\overline\mu)&1} \bbm{\xi\\\eta}}_{\sbm{\Yscr\\\Uscr}}.
  \end{aligned}
\end{equation}

Proceeding along the lines of the proof of Lemma I.4.1, one obtains from \eqref{eq:consenpres} that the extension by linearity and operator closure of the mapping \eqref{eq:SsDefprel} is a scattering-isometric, well-defined single-valued operator; please note that $\sbm{z_1\\z_2}\perp x_2$ for all $\sbm{x_2\\u_2}\in\dom{\SmallSysNode_s}$ implies
$$
  0=\Ipdp{\bbm{z_1\\z_2}}{e_s(\lambda,\lambda_*)^*\bbm{\xi\\\eta}}_{\Hscr_s}=\Ipdp{z_1(\lambda)}{\xi}_\Yscr+\Ipdp{z_2(\lambda_*)}{\eta}_\Uscr
$$
for all $\lambda,\lambda_*\in\cplus$, $\xi\in\Yscr$, and $\eta\in\Uscr$, i.e., $z_1=0$ and $z_2=0$.
\end{proof}

From now on $\SmallSysNode_s$ always denotes the extension of the mapping \eqref{eq:SsDefprel} by linearity and operator closure.

\begin{thm}\label{thm:basic}
For all $\varphi\in\Sscr(\cplus;\Uscr,\Yscr)$, the following claims are true:
\begin{enumerate}
\item The operator $\SmallSysNode_s$ is a simple scattering-conservative system node with input/state/output spaces $(\Uscr,\Xscr,\Yscr)$ and transfer function $\varphi$. 

\item The adjoint $\SmallSysNode_s^*:\sbm{\Xscr\\\Yscr}\supset\dom{\SmallSysNode_s^*}\to\sbm{\Xscr\\\Uscr}$ is the extension by linearity and operator closure of
\begin{equation}\label{eq:SsAdjSpec}
\begin{aligned}
  \SysNode_s^*:\bbm{e_s(\lambda,{\lambda_*})^*\\ \bbm{1&\varphi(\overline{\lambda_*})}}\bbm{\gamma\\\nu}\mapsto
  \bbm{e_s(\lambda,{\lambda_*})^*\bbm{\overline\lambda&0\\0&-\overline{\lambda_*}}\\ \bbm{\widetilde\varphi(\overline\lambda)&1}} \bbm{\gamma\\\nu}, \\
  \quad \lambda,\lambda_*\in\cplus,\,\gamma\in\Yscr,\,\nu\in\Uscr.
\end{aligned}
\end{equation}

\item The (unilateral) input map of $\SmallSysNode_s$ is $\Bfrak_s=\sbm{\Gamma\\1}$ and the approximately reachable subspace is $\Rfrak_s=\range{\Bfrak_s}=\sbm{\Gamma\\1}\Hscr_c$. The adjoint of the (unilateral) output map of $\SmallSysNode_s$ is $\Cfrak_s^*=\sbm{1\\\Gamma^*}$ and the orthogonal complement of the unobservable subspace is $\Rfrak_s^\dagger=\range{\Cfrak_s^*}=\sbm{1\\\Gamma^*}\Hscr_o$. The bilateral input and output maps of $\SmallSysNode_s$ are both equal to the identity operator on $\Hscr_s$.
\end{enumerate}
\end{thm}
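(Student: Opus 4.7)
The plan is to leverage the manifest symmetry between the defining formula \eqref{eq:SsDefprel} for $\SmallSysNode_s$ and the formula \eqref{eq:SsAdjSpec} for its candidate adjoint $T$, showing both are closed scattering-isometric operators via Prop.\ \ref{prop:consreal}, then matching them via the pairing identity \eqref{eq:consenpres} to establish $T=\SmallSysNode_s^*$ and hence conservativity, and finally reading off all the data from the explicit action on kernel functions.

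Concretely, let $T$ be the extension by linearity and operator closure of the right-hand side of \eqref{eq:SsAdjSpec}. An argument entirely parallel to Prop.\ \ref{prop:consreal}---based on the row/column-swapped variant of the identity underlying \eqref{eq:consenpres} with $\varphi$ and $\widetilde\varphi$ interchanged---shows that $T$ is a well-defined, single-valued, closed, scattering-isometric operator from $\sbm{\Hscr_s\\\Yscr}$ to $\sbm{\Hscr_s\\\Uscr}$. Reading \eqref{eq:consenpres} with $\sbm{z\\y}=\SmallSysNode_s\sbm{x\\u}$ on the left-hand factor and $\sbm{z'\\u'}=T\sbm{x'\\y'}$ on the right-hand factor rewrites the identity as $\Ipdp{z}{x'}+\Ipdp{x}{z'}=\Ipdp{u}{u'}-\Ipdp{y}{y'}$, which by \eqref{eq:tfi} is precisely the inclusion $T\subseteq\SmallSysNode_s^*$.

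The main obstacle is upgrading this inclusion to the equality $T=\SmallSysNode_s^*$. To handle it, I plan to extract the system-node structure of $\SmallSysNode_s$ from \eqref{eq:SsDefprel} directly. Setting $\gamma=0$ and using \eqref{eq:KsIota} to recognize $e_s(\lambda,\lambda_*)^*\sbm{0\\\nu}=\sbm{\Gamma\\1}e_c(\lambda_*)^*\nu$, I obtain
$$
\SysNode_s\bbm{\sbm{\Gamma\\1}e_c(\lambda_*)^*\nu \\ \nu} = \bbm{\overline{\lambda_*}\sbm{\Gamma\\1}e_c(\lambda_*)^*\nu \\ \varphi(\overline{\lambda_*})\nu},
$$
which by the standard node identity $\CD\sbm{(\mu-A_{-1})^{-1}Bu\\u}=\varphi(\mu)u$ identifies $\SmallSysNode_s$ as a (scattering-isometric) system node with main operator $A_s$, control operator $B_s$, and transfer function $\varphi$. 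From \eqref{eq:inputaction} this yields $\Bfrak_s=\sbm{\Gamma\\1}$ on the dense span of the kernel functions $e_c(\overline{\lambda_*})^*\nu$, hence everywhere by continuity; dually, setting $\nu=0$ and matching with \eqref{eq:twist} gives $\Cfrak_s^*=\sbm{1\\\Gamma^*}$, with closed ranges $\Rfrak_s=\sbm{\Gamma\\1}\Hscr_c$ and $\Rfrak_s^\dagger=\sbm{1\\\Gamma^*}\Hscr_o$ by the isometricity of these operators (Thm.\ \ref{thm:HsStruct}.2). By the symmetric argument applied to $T$, $T$ is also a passive system node. Lem.\ \ref{lem:passmaxdiss} applied to $T$ then forbids any proper scattering-dissipative extension, and after verifying that $\SmallSysNode_s^*$ is scattering-dissipative (a direct check from the explicit formula, using that $\SmallSysNode_s$ is scattering-isometric), the inclusion $T\subseteq\SmallSysNode_s^*$ is forced to be an equality, establishing Part 2 and the conservativity of $\SmallSysNode_s$ simultaneously.

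Finally, assembling via \eqref{eq:BfullDef}, the bilateral input map satisfies
$$
\Bfrak_{bil,s}\bbm{1&\Gamma\\\Gamma^*&1}\bbm{f\\g} = \bbm{\Cfrak_s^* & \Bfrak_s}\bbm{f\\g} = \bbm{1&\Gamma\\\Gamma^*&1}\bbm{f\\g},
$$
so $\Bfrak_{bil,s}$ coincides with the identity on the dense range of $\iota^*=\sbm{1&\Gamma\\\Gamma^*&1}$, whence $\Bfrak_{bil,s}=I_{\Hscr_s}$ by continuity and $\Cfrak_{bil,s}=I_{\Hscr_s}$ symmetrically. Simplicity of $\SmallSysNode_s$ then follows immediately from Thm.\ \ref{thm:bilIO}.1, since the identity is surjective.
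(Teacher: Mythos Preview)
The central gap is your assertion that the computation on kernel functions ``identifies $\SmallSysNode_s$ as a (scattering-isometric) system node''. Knowing that $\SmallSysNode_s$ is closed and scattering-isometric, and that it acts on kernel vectors according to the expected resolvent pattern, does not by itself make it a system node: you must still show that the would-be main operator generates a $C_0$-semigroup (equivalently, that $\alpha-A_s$ maps $\dom{A_s}$ onto $\Hscr_s$ for some $\alpha\in\cplus$). This is precisely what the paper supplies via the range-density computation generalizing Thm.~I.4.2, namely that $\sbm{\sbm{1&0}-\sbm{A_s\&B_s}\\\sbm{0&\sqrt2}}$ has dense range, and the companion density for $\sbm{\sbm{1&0}+\sbm{A_s\&B_s}\\\sbm{C_s\&D_s}}$ which then yields conservativity through Thm.~I.3.12. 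Your parallel claim that $T$ is a passive system node inherits the same gap, so Lem.~\ref{lem:passmaxdiss} cannot be invoked for $T$.

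Two further points compound this. First, your appeal to \eqref{eq:twist} to read off $\Cfrak_s^*$ is circular: Lemma~\ref{lem:consdual} assumes conservativity, which is what you are trying to prove. Second, the assertion that $\SmallSysNode_s^*$ is scattering-dissipative ``by a direct check \dots\ using that $\SmallSysNode_s$ is scattering-isometric'' is not automatic for closed unbounded operators; scattering-isometry of an operator does not in general force scattering-dissipativity of its adjoint without additional maximality information---and that maximality is exactly the missing range-density argument. Once conservativity is secured (by the paper's route or an equivalent maximality argument), your computations of $\Bfrak_s=\sbm{\Gamma\\1}$, $\Cfrak_s^*=\sbm{1\\\Gamma^*}$, the bilateral maps, and simplicity via Thm.~\ref{thm:bilIO}.1 are correct and coincide with the paper's.
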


Assertion three can be written more explicitly as (for $\alpha\in\cplus, \, u \in \cU,\, \sbm{x_1\\x_2}\in\Hscr_s$)
\begin{align}
  (\alpha - A_{s,-1})^{-1} B_s \,u & = (\mu,\mu_*)\mapsto  
  \bbm{\displaystyle \frac{\varphi(\mu)-\varphi(\alpha)}{\alpha-\mu} \\ 
    \displaystyle \frac{1-\widetilde\varphi(\mu_*)\,\varphi(\alpha)}{\alpha+\mu_*}}u,\quad \mu,\mu_* \in {\mathbb C}^+, \label{resB'} \\
  C_s(\alpha - A_s)^{-1}\sbm{x_1\\x_2} &= x_1(\alpha). \label{Cres'}
\end{align}

\begin{proof}[Proof of Thm.\ \ref{thm:basic}]
We obtain that $\SmallSysNode_s$ is an energy-preserving system node by generalizing the proof of Thm.\ I.4.2: If 
$$
	\bbm{x\\u}\in\bbm{\Hscr_s\\\Uscr}\ominus
	\range{\bbm{\bbm{1&0}-\bbm{A_s \& B_s}\\\bbm{0&\sqrt2}}}
$$
then in particular for all $\lambda,\lambda_*\in\cplus$, $\gamma\in\Yscr$, and $\nu\in\Uscr$:
\begin{equation}\label{eq:maxenpres}
\begin{aligned}
	0 &=\Ipdp{\bbm{\bbm{1&0}-\bbm{A_s \& B_s}\\\bbm{0&\sqrt2}}\bbm{e_s(\lambda,{\lambda_*})^*\\\bbm{\widetilde\varphi({\overline\lambda})&1}}\bbm{\gamma\\\nu}}{\bbm{x\\u}}\\ 
	  &=\Ipdp{\bbm{\gamma\\\nu}}{\bbm{1+\overline\lambda&0\\0&1-\overline\lambda_*}\bbm{x_1(\lambda)\\x_2({\lambda_*})}} +\Ipdp{\sqrt 2\,\widetilde\varphi(\overline\lambda)\gamma+\sqrt 2\,\nu}{u}.
\end{aligned}
\end{equation}
Restricting \eqref{eq:maxenpres} to the case $\lambda_*=1$ and $\gamma=0$, we obtain that $u=0$. Keeping $\gamma=0$ but taking $\lambda_*\neq1$, we get $x_2=0$. Finally, letting $\gamma$ run over $\Yscr$ and $\lambda$ over $\cplus$, we obtain that $x_1=0$. Combining this with Prop.\ \ref{prop:consreal} and the proof of Thm.\ I.4.2, we obtain that $\SmallSysNode_s$ is an energy-preserving system node. In the same way we see that the range of $\sbm{\sbm{1&0}+\sbm{A_s \& B_s}\\\sbm{C_s\&D_s}}$ is dense in $\sbm{\Hscr_s\\\Uscr}$; then $\SmallSysNode_s$ is a conservative system node by Thm.\ I.3.12. Claim two now follows immediately from \eqref{eq:tfi}.

Using Def.\ I.3.1 and \eqref{eq:SsDefprel}, we calculate
\begin{equation}\label{eq:resB}
\begin{aligned}
  A_{s,-1}\, e_s(\lambda,{\lambda_*})^*\bbm{0\\\nu}+B_s\nu &= \bbm{A_s \& B_s} \bbm{e_s(\lambda,{\lambda_*})^*\\\bbm{\widetilde\varphi(\overline\lambda)&1}}\bbm{0\\\nu}
    = \overline{\lambda_*}\, e_s(\lambda,{\lambda_*})^*\bbm{0\\\nu} \\
  &\Longrightarrow\quad e_s(\lambda,{\lambda_*})^*\bbm{0\\\nu} = (\overline{\lambda_*}-A_{s,-1} )^{-1} B_s\nu.
\end{aligned}
\end{equation}
From (I.3.5) and \eqref{eq:SsDefprel} we then have (for $\lambda_*\in\cplus,\,\nu\in\Uscr$):
$$
\begin{aligned}
  \widehat\Dfrak(\overline{\lambda_*})\,\nu &=
   \bbm{C_s\&D_s}\bbm{(\overline{\lambda_*}-A_{s,-1} )^{-1} B_s\\1}\nu \\
   &= \bbm{C_s\&D_s}\bbm{e_s(\lambda,{\lambda_*})^*\\\bbm{\widetilde\varphi(\overline\lambda)&1}}\bbm{0\\\nu} 
  = \varphi(\overline{\lambda_*})\,\nu.
\end{aligned}
$$

From \eqref{eq:resB} and \eqref{eq:KsKc} we have that $\Bfrak_s=\sbm{\Gamma\\1}$ and by definition $\Rfrak_s=\crange{\Bfrak_s}$. However, since $\Bfrak_s:\Hscr_c\to\Hscr_s$ is isometric by Thm.\ \ref{thm:HsStruct}.2, $\range{\Bfrak_s}$ is closed. By \eqref{eq:outputaction} we have $e_o(\lambda)\,\Cfrak=C\,(\lambda-A)^{-1}$ and carrying out the calculation \eqref{eq:resB} for $\SmallSysNode_s^*$ in \eqref{eq:SsAdjSpec}, we obtain $\Cfrak_s^*=\sbm{1\\\Gamma^*}$: 
\begin{equation}\label{CfrakSAdj}
	\Cfrak_s^*\,e_o(\lambda)^*\,\gamma = 
	(\overline\lambda-A^d_{s,-1} )^{-1} C_s^*\gamma = 
	e_s(\lambda,{\lambda_*})^*\bbm{\gamma\\0} =
	\bbm{1\\\Gamma^*}e_o(\lambda)^*\,\gamma;
\end{equation}
where we also used \eqref{eq:KsIota}. Combining this and $\Bfrak_s=\sbm{\Gamma\\1}$ with \eqref{eq:BfullDef}, we obtain for all $\sbm{f\\g}\in\sbm{\Hscr_o\\\Hscr_c}$ that
$$
	 \Bfrak_{s,full}\bbm{1&\Gamma\\\Gamma^*&1}\bbm{f\\g} = \bbm{\Cfrak^*&\Bfrak}\bbm{f\\g}
	 	=  \bbm{1&\Gamma\\\Gamma^*&1}\bbm{f\\g};
$$
this shows that $\Bfrak_{s,full}$ acts as the identity on the dense subspace $\range{\sbm{1&\Gamma\\\Gamma^*&1}}$ of $\Hscr_s$; by Thm.\ \ref{thm:bilIO}.4, $\SmallSysNode_c$ is simple. Using \eqref{eq:CfrakFull} we obtain that also $\Cfrak_{s,full}=1$.
\end{proof}

We have the following variant of Thm.\ \ref{thm:sharperint} regarding intertwinement with $\SmallSysNode_s$:

\begin{thm}\label{thm:Intertw} 
Let $\SmallSysNode$ be a conservative system with state space $\Xscr$ and transfer function $\varphi$. Then the bilateral input map $\Bfrak_{bil}$ of $\SmallSysNode$ in Thm.\ \ref{thm:bilIO} intertwines $\SmallSysNode_s$ isometrically with $\SmallSysNode$. Additionally, $\SmallSysNode_s$ is simple if and only if $\Bfrak_{bil}$ is unitary.
\end{thm}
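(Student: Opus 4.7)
The plan has two parts: verify the intertwinement conditions \eqref{eq:intertwdom}--\eqref{eq:intertw} on the kernel-function generators of $\dom{\SmallSysNode_s}$ from Prop.\ \ref{prop:consreal}, then extend to all of $\dom{\SmallSysNode_s}$ by a graph-closure argument (valid because $\Bfrak_{bil}$ is bounded and both $\SmallSysNode_s$ and $\SmallSysNode$ are closed). Afterwards the isometry and the unitarity criterion can be read off directly from Thm.\ \ref{thm:bilIO}.

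On a typical generator $\sbm{x\\u}$, one has $x = e_s(\lambda,\lambda_*)^*\sbm{\gamma\\\nu}$ and $u = \widetilde\varphi(\overline\lambda)\gamma + \nu = \varphi(\lambda)^*\gamma + \nu$. Formula \eqref{eq:BfullKern} splits
\begin{equation*}
  \bbm{\Bfrak_{bil}\,x \\ u} = \bbm{(\overline\lambda - A^d_{-1})^{-1}C^*\gamma \\ \varphi(\lambda)^*\gamma} + \bbm{(\overline{\lambda_*} - A_{-1})^{-1}B\nu \\ \nu}.
\end{equation*}
The first summand lies in $\dom{\SmallSysNode}$ with image $\sbm{-\overline\lambda(\overline\lambda - A^d_{-1})^{-1}C^*\gamma \\ \gamma}$ by the conservative twist \eqref{eq:twist}, and the second lies in $\dom{\SmallSysNode}$ with image $\sbm{\overline{\lambda_*}(\overline{\lambda_*} - A_{-1})^{-1}B\nu \\ \varphi(\overline{\lambda_*})\nu}$ by the universal system-node identity (I.3.6) applied at $\overline{\lambda_*}\in\cplus\subset\rho(A)$. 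Summing the two images and re-packaging the state component through \eqref{eq:BfullKern} gives exactly $\sbm{\Bfrak_{bil}&0\\0&1}\SmallSysNode_s\sbm{x\\u}$, where the right-hand side is computed from \eqref{eq:SsDefprel}. This is the only substantive computation; extending by linearity and then invoking closedness of $\SmallSysNode$ together with boundedness of $\Bfrak_{bil}$ propagates \eqref{eq:intertwdom}--\eqref{eq:intertw} to every $\sbm{x\\u}\in\dom{\SmallSysNode_s}$.

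Isometry of $\Bfrak_{bil}$ is already contained in Thm.\ \ref{thm:bilIO}.5, which also identifies $\range{\Bfrak_{bil}} = \overline{\Rfrak+\Rfrak^\dagger}$; this range is closed because $\Bfrak_{bil}$ is isometric with $\Hscr_s$ complete. Hence $\Bfrak_{bil}$ is unitary iff $\overline{\Rfrak+\Rfrak^\dagger}=\Xscr$, which by Def.\ \ref{def:simple} is simplicity of $\SmallSysNode$; since $\SmallSysNode_s$ is always simple by Thm.\ \ref{thm:basic}, transporting simplicity along the isometric intertwinement yields the advertised equivalence. The only real obstacle is keeping the bookkeeping clean in the kernel-function computation; neither analytic depth nor density arguments beyond the routine graph-closure step are needed.
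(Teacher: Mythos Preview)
Your proposal is correct and follows essentially the same approach as the paper: both verify \eqref{eq:intertwdom}--\eqref{eq:intertw} on the kernel-function generators by splitting $\Bfrak_{bil}\,e_s(\lambda,\lambda_*)^*\sbm{\gamma\\\nu}$ via \eqref{eq:BfullKern}, applying the conservative identity \eqref{eq:twist} to the $\gamma$-piece and (I.3.6) to the $\nu$-piece, and then pass to the full domain by graph closure; the isometry and the simplicity/unitarity equivalence are in both cases read off from Thm.\ \ref{thm:bilIO}. The only cosmetic wrinkle is the theorem's final clause, which as written refers to $\SmallSysNode_s$ but (as your argument and the paper's proof both make clear) should be read as simplicity of the given $\SmallSysNode$.
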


\begin{proof}
The isometricity of $\Bfrak_{bil}$ and the fact that $\range{\Bfrak_{bil}}$ is dense if and only of $\SmallSysNode$ is simple were shown in Thm.\ \ref{thm:bilIO}.

Using \eqref{eq:BfullKern}, Lemma \ref{lem:consdual}, (I.3.6), and \eqref{eq:SsDefprel}, for all $\lambda,\lambda_*\in\cplus$, $\gamma\in\Yscr$, and $\nu\in\Uscr$:
\begin{align*}
  & \bbm{\Bfrak_{bil}\,e_s(\lambda,{\lambda_*})^*\sbm{\gamma\\\nu} \\ \varphi(\overline\lambda)^*\gamma+\nu}  \\
  &\qquad = \bbm{(\overline{\lambda}-A^d_{-1})^{-1}C^*\gamma\\\varphi(\overline\lambda)^*\gamma} + \bbm{(\overline{\lambda_*}-A_{-1})^{-1}B\nu\\\nu}
	\in\dom{\SmallSysNode}\quad\text{and}
\end{align*}
\begin{equation}\label{eq:intertwcalc}
\begin{aligned}
 & \SysNode\bbm{\Bfrak_{bil}\,e_s(\lambda,{\lambda_*})^*\sbm{\gamma\\\nu} \\ \varphi(\overline\lambda)^*\gamma+\nu} \\
  &\qquad = \bbm{-\overline\lambda(\overline\lambda-A^d_{-1})^{-1}C^*\gamma\\\gamma} 
    + \bbm{\overline{\lambda_*}(\overline{\lambda_*}-A_{-1})^{-1}B\nu\\\varphi(\overline{\lambda_*})\,\nu} \\
  &\qquad = \bbm{\Bfrak_{bil}\bbm{A_s\&B_s}\\\bbm{C_s\&D_s}} 
  \bbm{e_s(\lambda,{\lambda_*})^*\sbm{\gamma\\\nu} \\ \varphi(\overline\lambda)^*\,\gamma+\nu}.
\end{aligned}
\end{equation}
Taking linear combinations of elements $\sbm{\overline{\lambda_*}(\overline{\lambda_*}-A_{-1})^{-1}B\nu\\\varphi(\overline{\lambda_*})\,\nu}$ and closing in $\dom{\SmallSysNode_s}$ (equipped with the graph norm), we obtain both \eqref{eq:intertwdom} and \eqref{eq:intertw}.
\end{proof}

As an immediate consequence of the theorem, any two simple conservative realizations with the same transfer function are unitarily similar.

The above formulas \eqref{eq:SsDefprel} and \eqref{eq:SsAdjSpec} for $\SmallSysNode_s$ and its adjoint only give the action on special, kernel-like elements. Using \eqref{eq:tfi}, we can obtain explicit formulas for the action of $\SmallSysNode_s$ on generic elements of its domain: 

\begin{thm}\label{thm:consexplicit}
The model $\SmallSysNode_s$ has the explicit representation given in Thm. \ref{thm:preliminary}, where the vector $y$ can alternatively be defined as the unique $y \in \cY$ for which the function $\sbm{z_1 \\ z_2}$ in \eqref{defz'} is an element of $\cH_s$.
\end{thm}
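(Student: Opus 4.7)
The plan is to first verify \eqref{defy'}-\eqref{defz'} on the kernel-type elements of $\dom{\SmallSysNode_s}$, then extend them by a continuity argument, and finally establish the reverse domain inclusion via the adjoint-pairing characterization of closed operators. On a kernel-type element $\sbm{\sbm{x_1\\x_2}\\u}=\sbm{e_s(\lambda,\lambda_*)^*\sbm{\gamma\\\nu}\\\widetilde\varphi(\overline\lambda)\gamma+\nu}$, the image under $\SmallSysNode_s$ is given explicitly by \eqref{eq:SsDefprel}, while \eqref{eq:KsIota} combined with \eqref{eq:Gamma} expresses $x_1(\mu)$ and $x_2(\mu_*)$ as explicit rational combinations of $\varphi$-kernels. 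A direct partial-fraction computation then shows that $\mu x_1(\mu)+\varphi(\mu)u-y$ and $-\mu_*\,x_2(\mu_*)+u-\varphi(\overline{\mu_*})^*y$ equal the two entries of $e_s(\lambda,\lambda_*)^*\sbm{-\overline\lambda\gamma\\\overline{\lambda_*}\nu}$, verifying \eqref{defz'}; and $\eta x_1(\eta)+\varphi(\eta)u\to y=\gamma+\varphi(\overline{\lambda_*})\nu$ as $\re\eta\to\infty$, since the $\varphi(\eta)u$ term supplies exactly the cancellation that keeps the limit finite despite the fact that $\varphi(\eta)$ itself need not have a limit.

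Corollary \ref{cor:xtendstozero} shows that pointwise evaluation $\sbm{f\\g}\mapsto\sbm{f(\mu)\\g(\mu_*)}$ is continuous from $\Hscr_s$ into $\sbm{\Yscr\\\Uscr}$. Since $\dom{\SmallSysNode_s}$ is by construction the graph-norm closure of its kernel-type elements, passing to the limit in \eqref{defz'} gives the formula on all of $\dom{\SmallSysNode_s}$. The limit formula \eqref{defy'} then follows by rearranging \eqref{defz'} as $y=\mu x_1(\mu)+\varphi(\mu)u-z_1(\mu)$ and using $z_1(\mu)\to 0$ as $\re\mu\to\infty$ (Corollary \ref{cor:xtendstozero} applied now to $\sbm{z_1\\z_2}\in\Hscr_s$). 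The uniqueness asserted in the alternative characterization is obtained by the same decay: if two choices $y_1,y_2\in\Yscr$ both make $\sbm{z_1\\z_2}$ in \eqref{defz'} lie in $\Hscr_s$, their difference $w:=y_1-y_2$ produces $\sbm{-w\\-\varphi(\overline{\mu_*})^*w}\in\Hscr_s$, whose first component is constant in $\mu$, so Corollary \ref{cor:xtendstozero} forces $w=0$.

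The main obstacle is the reverse inclusion $\dom{T}\subset\dom{\SmallSysNode_s}$, where $T$ denotes the operator defined by \eqref{defy'}-\eqref{eq:domdef}; the preceding steps already give $\SmallSysNode_s\subset T$. Because $\SmallSysNode_s$ is closed with $(\SmallSysNode_s^*)^*=\SmallSysNode_s$, it suffices to check that for every $\sbm{\sbm{x_1\\x_2}\\u}\in\dom{T}$ with $T\sbm{\sbm{x_1\\x_2}\\u}=\sbm{\sbm{z_1\\z_2}\\y}$ and every $\sbm{\sbm{a_1\\a_2}\\b}\in\dom{\SmallSysNode_s^*}$ with $\SmallSysNode_s^*\sbm{\sbm{a_1\\a_2}\\b}=\sbm{\sbm{c_1\\c_2}\\d}$ one has
$$
\Ipdp{\sbm{z_1\\z_2}}{\sbm{a_1\\a_2}}_{\Hscr_s}+\Ipdp{y}{b}_\Yscr = \Ipdp{\sbm{x_1\\x_2}}{\sbm{c_1\\c_2}}_{\Hscr_s}+\Ipdp{u}{d}_\Uscr.
$$
Since $\dom{\SmallSysNode_s^*}$ is by \eqref{eq:SsAdjSpec} the graph-norm closure of its kernel-type elements and the inner product is continuous, it suffices to verify this identity when $\sbm{\sbm{a_1\\a_2}\\b}=\sbm{e_s(\lambda,\lambda_*)^*\sbm{\gamma\\\nu}\\\gamma+\varphi(\overline{\lambda_*})\nu}$ with the corresponding $\sbm{\sbm{c_1\\c_2}\\d}=\sbm{e_s(\lambda,\lambda_*)^*\sbm{\overline\lambda\gamma\\-\overline{\lambda_*}\nu}\\\varphi(\lambda)^*\gamma+\nu}$ read from \eqref{eq:SsAdjSpec}. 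For such $\sbm{\sbm{a_1\\a_2}\\b}$, the reproducing-kernel property \eqref{genreprod} collapses both sides to the common expression $\lambda\Ipdp{x_1(\lambda)}{\gamma}_\Yscr-\lambda_*\Ipdp{x_2(\lambda_*)}{\nu}_\Uscr+\Ipdp{u}{\varphi(\lambda)^*\gamma}_\Uscr+\Ipdp{u}{\nu}_\Uscr$, with the $\Ipdp{y}{\gamma}_\Yscr$ and $\Ipdp{y}{\varphi(\overline{\lambda_*})\nu}_\Yscr$ contributions cancelling between the two sides. The delicacy lies in matching the $\varphi$-laden cross terms correctly, but this is a direct algebraic verification once \eqref{defy'}-\eqref{defz'} and \eqref{eq:SsAdjSpec} are substituted.
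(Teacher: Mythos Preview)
Your proof is correct and essentially follows the same strategy as the paper, namely the adjoint-pairing characterization $\SmallSysNode_s=(\SmallSysNode_s^*)^*$ tested against the kernel-type elements of $\dom{\SmallSysNode_s^*}$ from \eqref{eq:SsAdjSpec}, with the decay from Cor.\ \ref{cor:xtendstozero} supplying \eqref{defy'} and uniqueness.

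The one genuine difference is in the forward inclusion $\SmallSysNode_s\subset T$. You verify \eqref{defz'} first on kernel-type elements by explicit partial-fraction computation and then extend by graph-norm closure using the continuity of point evaluation. The paper instead uses the same adjoint-pairing identity \eqref{eq:consactioncalc} for both inclusions: for $\sbm{x\\u}\in\dom{\SmallSysNode_s}$ the left-hand side of \eqref{eq:consactioncalc} is rewritten as $\Ipdp{\sbm{z\\y}}{\cdots}$ and the reproducing property immediately yields \eqref{defz'} without any limiting argument or kernel-element calculation. This is shorter and avoids the partial-fraction check, but your route is entirely valid and perhaps more transparent about why the formulas \eqref{defz'} take their particular shape. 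The reverse inclusion is handled identically in both proofs.
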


\begin{proof}
By \eqref{eq:SsAdjSpec}, for all $\sbm{x\\u}\in\sbm{\Hscr_s\\\Uscr}$, $\mu,\mu_*\in\cplus$, $\gamma\in\Yscr$, and $\nu\in\Uscr$:
\begin{equation}\label{eq:consactioncalc}
\begin{aligned}
	\Ipdp{\bbm{x\\u}}{\SysNode_s^*\bbm{e_s(\mu,\mu_*)^*\\\bbm{1&\varphi(\overline{\mu_*})}}
	\bbm{\gamma\\\nu}}_{\sbm{\Hscr_s\\\Uscr}} &= \\
	\Ipdp{\bbm{\mu\,x_1(\mu)\\-\mu_*\,x_2(\mu_*)}
		+\bbm{\varphi(\mu)\\1}u}{\bbm{\gamma\\\nu}}_{\sbm{\Yscr\\\Uscr}}.
\end{aligned}
\end{equation}
First assume that $\sbm{x\\u}\in\dom{\SmallSysNode_s}$ and define $\sbm{z\\y}:=\SmallSysNode_s\sbm{x\\u}\in\sbm{\Xscr\\\Yscr}$, so that the left-hand side of \eqref{eq:consactioncalc} equals
$$
\begin{aligned}
	\Ipdp{\bbm{z\\y}}{\bbm{e_s(\mu,\mu_*)^*\\\bbm{1&\varphi(\overline{\mu_*})}}
	\bbm{\gamma\\\nu}}_{\sbm{\Hscr_s\\\Uscr}} &= 
	\Ipdp{e_s(\mu,\mu_*)z+\bbm{1\\\varphi(\overline{\mu_*})^*}y}{\bbm{\gamma\\\nu}}_{\sbm{\Yscr\\\Uscr}}
\end{aligned}
$$
for all $\sbm{\gamma\\\nu}$. Then \eqref{defz'} holds, and moreover \eqref{defy'} holds by Cor.\ \ref{cor:xtendstozero}. Thus, the action of $\SmallSysNode_s$ is correct and $\dom{\SmallSysNode_s}$ is contained in the set on the right-hand side of \eqref{eq:domdef}.

Now drop the assumption $\sbm{x\\u}\in\dom{\SmallSysNode_s}$ and instead assume that $y\in\Yscr$ is such that $\sbm{z_1\\z_2}$ defined by \eqref{defz'} is in $\Hscr_s$. Then $y$ satisfies \eqref{defy'} and by the definition of $\sbm{z_1\\z_2}$, \eqref{eq:consactioncalc} equals
$$
	\Ipdp{\bbm{z\\y}}{\bbm{e_s(\mu,\mu_*)^*\\\bbm{1&\varphi(\overline{\mu_*})}}
	\bbm{\gamma\\\nu}}_{\sbm{\Hscr_s\\\Uscr}}
$$
for all $\sbm{e_s(\mu,\mu_*)^*\\\sbm{1&\varphi(\overline{\mu_*})}} 	\sbm{\gamma\\\nu}$ which by the definition of $\SmallSysNode_s$ span a dense subspace of $\dom{\SmallSysNode_s}$. Thus 
$$
	\Ipdp{\bbm{x\\u}}{\SysNode_s^*\bbm{x'\\y'}}_{\sbm{\Hscr_s\\\Uscr}} =
	\Ipdp{\bbm{z\\y}}{\bbm{x'\\y'}}_{\sbm{\Hscr_s\\\Uscr}}
$$
for all $\sbm{x'\\y'}\in\dom{\SmallSysNode_s^*}$, so that $\sbm{x\\u}\in\dom{\big(\SmallSysNode_s^*\big)^*}$.
\end{proof}

By (I.3.7), a system node $\SmallSysNode$ with state space $\Xscr$ can be reconstructed from its component operators $A$, $B$, $C$, and its transfer function $\widehat\Dfrak(\alpha)$, for an arbitrary $\alpha\in\res A$. In the following result, we describe these operators for $\SmallSysNode_s$. In order to state the result, we define a linear operator $R_\alpha$, $\alpha\in\cplus$, on the space of analytic functions ${\mathbb C}^+ \to \sbm{\cY\\\cU}$ by
\begin{equation}\label{eq:resanalyt}
	R_\alpha \bbm{x_1\\x_2} :=  (\mu,\mu_*) \mapsto 
	\bbm{ \displaystyle \frac{x_1(\mu) - x_1(\alpha)}{\alpha - \mu} \\ 
		\displaystyle \frac{x_2(\mu_*) - \widetilde\varphi(\mu_*) \, x_1(\alpha)}{\alpha + \mu_*}},
	 \quad \mu,\mu_*\in\cplus.
\end{equation}

\begin{prop}   \label{P:resolA}
The main operator $A_s$ of $\SmallSysNode_s$ is
\begin{equation}\label{eq:As}
  A_s \bbm{x_1\\x_2} =(\mu,\mu_*)\mapsto 
	\bbm{\mu\, x_1(\mu)\\-\mu_*\, x_2(\mu_*)}-\bbm{1\\\widetilde\varphi(\mu_*)} y,\quad\mu,\mu_*\in\cplus,
\end{equation}
with domain consisting of those $\sbm{x_1\\x_2}\in\Hscr_s$ for which the limit 
$$
	y:=\lim_{\re\eta\to+\infty} \eta \,x_1(\eta)
$$ 
exists in $\Yscr$ and the function in \eqref{eq:As} lies in $\Hscr_s$.

The observation operator is 
\begin{equation}\label{eq:Cs}
	C_s\bbm{x_1\\x_2}=\lim_{\re\eta\to+\infty} \eta \,x_1(\eta),\quad \bbm{x_1\\x_2}\in\dom{A_s}.
\end{equation}
For all $\sbm{x_1\\x_2}\in\dom{A_s}$ it holds (with $y$ as above) that
\begin{equation}\label{eq:ylimto0}
	\lim_{\re\eta\to+\infty} \eta\,x_2(\eta)+\widetilde\varphi(\eta)\,y=0.
\end{equation}

The resolvent of $A_s$ is
\begin{equation}   \label{res}
 ( \alpha - A_s)^{-1} = R_\alpha\big|_{\cH_s},
\end{equation}
and moreover, for all $\alpha\in\cplus$ and $\sbm{x_1\\x_2}\in\Hscr_s$:
\begin{equation}\label{eq:Ares'}
  A_s( \alpha - A_s)^{-1} \bbm{x_1\\x_2} = (\mu,\mu_*)\mapsto 
    \bbm{\displaystyle \frac{ \mu\,x_1(\mu) - \alpha\,x_1(\alpha)}{\alpha - \mu} \\ 
    \displaystyle -\frac{ \mu_*\,x_2(\mu_*) + \widetilde\varphi(\mu_*)\,\alpha\,x_1(\alpha)}{\alpha + \mu_*}}, 
\end{equation}
$\mu,\mu_* \in {\mathbb C}^+$.
\end{prop}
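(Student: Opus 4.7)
The plan is to read off $A_s$ and $C_s$ from Theorem \ref{thm:consexplicit} by specializing the action of $\SmallSysNode_s$ to inputs with $u=0$. By the definition of the main operator and observation operator of a system node, $\dom{A_s}=\{x\in\Hscr_s:\sbm{x\\0}\in\dom{\SmallSysNode_s}\}$ and $\sbm{A_s x\\C_s x}=\SmallSysNode_s\sbm{x\\0}$ for such $x$. Substituting $u=0$ into \eqref{defy'}, \eqref{defz'}, and \eqref{eq:domdef}, and using $\varphi(\overline{\mu_*})^*=\widetilde\varphi(\mu_*)$, immediately yields \eqref{eq:As}, \eqref{eq:Cs}, and the stated domain description. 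The limit relation \eqref{eq:ylimto0} follows because the second entry of the function $A_s\sbm{x_1\\x_2}$ is an element of $\Hscr_s$ whenever $\sbm{x_1\\x_2}\in\dom{A_s}$, and hence by Corollary \ref{cor:xtendstozero} must tend to $0$ as $\re\eta\to+\infty$.

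For the resolvent formula \eqref{res}, the main obstacle is that the abstract reproducing-kernel definition of $\Hscr_s$ makes it awkward to verify directly that the function $R_\alpha\sbm{x_1\\x_2}$ even lies in $\Hscr_s$ for every $\sbm{x_1\\x_2}\in\Hscr_s$. I would circumvent this by exploiting Theorem \ref{thm:basic}: $\SmallSysNode_s$ is a conservative and hence passive system node, so $A_s$ generates a contraction semigroup and $\cplus\subset\res{A_s}$; in particular $\alpha-A_s:\dom{A_s}\to\Hscr_s$ is a bijection. It therefore suffices to verify $R_\alpha(\alpha-A_s)\sbm{x_1\\x_2}=\sbm{x_1\\x_2}$ for every $\sbm{x_1\\x_2}\in\dom{A_s}$: then for any $h\in\Hscr_s$, writing $x=(\alpha-A_s)^{-1}h\in\dom{A_s}$ gives $R_\alpha h=R_\alpha(\alpha-A_s)x=x=(\alpha-A_s)^{-1}h$, and as a by-product $R_\alpha$ automatically sends $\Hscr_s$ into $\dom{A_s}\subset\Hscr_s$.

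The remaining verification is a short direct computation. For $\sbm{x_1\\x_2}\in\dom{A_s}$ and $y=C_s\sbm{x_1\\x_2}$, formula \eqref{eq:As} gives that the first component of $(\alpha-A_s)\sbm{x_1\\x_2}$ is $(\alpha-\mu)\,x_1(\mu)+y$, whose value at $\mu=\alpha$ is precisely $y$, and the second component is $(\alpha+\mu_*)\,x_2(\mu_*)+\widetilde\varphi(\mu_*)\,y$. Plugging these two into \eqref{eq:resanalyt} and cancelling telescopes each entry back to $x_1(\mu)$ and $x_2(\mu_*)$, as required. Finally, \eqref{eq:Ares'} is a purely algebraic consequence of the identity $A_s(\alpha-A_s)^{-1}=\alpha(\alpha-A_s)^{-1}-I$ combined with \eqref{res}; substituting and simplifying the two components delivers exactly the formula stated.
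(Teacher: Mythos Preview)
Your proof is correct and follows essentially the same route as the paper's: you specialize Theorem~\ref{thm:consexplicit} to $u=0$ to read off $A_s$, $C_s$, and the domain, invoke Corollary~\ref{cor:xtendstozero} for \eqref{eq:ylimto0}, and then verify the resolvent formula by computing $R_\alpha(\alpha-A_s)$ on $\dom{A_s}$ and using $A_s(\alpha-A_s)^{-1}=\alpha(\alpha-A_s)^{-1}-1$ for \eqref{eq:Ares'}. The only minor addition is that you make explicit the appeal to $\cplus\subset\res{A_s}$ (via passivity from Theorem~\ref{thm:basic}) to guarantee that $\alpha-A_s$ is a bijection, which the paper leaves implicit.
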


\begin{proof}
The claims on $A_s$ and $C_s$ (including \eqref{eq:ylimto0}) follow by comparing Defs.\ I.3.1\,--\,2 to \eqref{defy'}\,--\,\eqref{eq:domdef}. For $\alpha\in\cplus$ and $\sbm{w_1\\w_2} \in \dom{A_s}$ arbitrary, set  $\sbm{x_1\\x_2} := (\alpha - A_s) \sbm{w_1\\w_2}$.  From \eqref{eq:As}--\eqref{eq:Cs} we see that
\begin{equation}\label{eq:alphaminAs}
  \bbm{x_1(\mu)\\x_2(\mu_*)} = \bbm{(\alpha-\mu)\, w_1(\mu)\\(\alpha+\mu_*)\, w_2(\mu_*)}+\bbm{1\\\widetilde\varphi(\mu_*)}C_s\bbm{w_1\\w_2}.
\end{equation}
We conclude that $C_s \sbm{w_1\\w_2} = x_1(\alpha)$, which gives an alternative proof of \eqref{Cres'}, and solving for $\sbm{w_1\\w_2}$, we get \eqref{res}. Then \eqref{eq:Ares'} follows from \eqref{res} and the identity $A_s (\alpha - A_s)^{-1} = \alpha (\alpha - A_s)^{-1} - 1$.
\end{proof}

The following are easy to see (for all $\alpha\in\cplus$):
\begin{align}  
  ( \alpha - A_{s,-1})^{-1} A_{s,-1} \bbm{x_1\\x_2} &= A_s( \alpha - A_s)^{-1} \bbm{x_1\\x_2} \notag \\ &= R_\alpha
    \left((\mu,\mu_*)\mapsto \bbm{ \mu\,x_1(\mu) \\ -\mu_*\,x_2(\mu_*)}\right) \label{eq:Ares} \\
  (\alpha - A_{s,-1})^{-1} B_s u & = R_\alpha
    \left((\mu,\mu_*)\mapsto \bbm{\varphi (\mu) \\ 1}u\right).  \label{resB}
\end{align}
From the former of these formulas, it seems reasonable that $A_{s,-1}\,\sbm{x_1\\x_2}=(\mu,\mu_*)\mapsto \sbm{ \mu\,x_1(\mu) \\ -\mu_*\,x_2(\mu_*)}$ while the latter hints that the control operator $B_s$ of $\SmallSysNode_s$ could be  $B_s\,u=\sbm{\varphi(\cdot)\\1}u$. In order to properly decouple $\bbm{A_s\&B_s}$ into $\bbm{A_{s,-1}&B_s}$ and prove these conjectures, we shall next interpret $\bbm{A_s\&B_s}$ as an operator that maps into the extrapolation space $\Hscr_{s,-1}$ of $\Hscr_s$.

\section{The extrapolation space and its reproducing kernel}\label{sec:extrapolation}

The formula \eqref{res} for the resolvent of $A_s$ suggests a way to concretely identify the $(-1)$-scaled rigged space $\cH_{s,-1}$ defined abstractly as the completion of the space $\cH_s$ in the norm
$$
  \| x\| = \| (\beta - A_s)^{-1}x\|_{\cH_s},
$$
where $\beta$ is the fixed rigging parameter. Indeed, we should have $\sbm{z_1\\z_2}\in\Hscr_{s,-1}$ if and only if $R_\beta\sbm{z_1\\z_2}\in\Hscr_s$, see \eqref{eq:resanalyt}, and in this case
\begin{equation}\label{eq:Hs-1norm}
  	\left\| \bbm{z_1\\z_2} \right\|_{\cH_{s,-1}} = \left\| R_\beta \bbm{z_1\\z_2} \right\|_{\cH_s}.
\end{equation}

It is straightforward to verify that $R_\beta\sbm{z_1\\z_2}=0 \iff \sbm{z_1\\z_2}\in \Zscr_s$, where
\begin{equation}\label{eq:Zscrsdef}
  \Zscr_s:=\set{(\mu,\mu_*)\mapsto \bbm{1\\\widetilde\varphi(\mu_*)}\gamma,\quad \mu,\mu_*\in\cplus,\, \gamma\in\Yscr};
\end{equation}
in particular, $R_\beta\Zscr_s=\zero\subset\Hscr_s$. Hence, $\|\cdot\|_{\Hscr_{s,-1}}$ is a norm on the quotient space
\begin{equation}   \label{eq:Hs-1}
	\cH_{s,-1} := \left\{ \bbm{x_1\\x_2} : \cplus\times\cplus \to \bbm{\cY\\\cU} ~\text{analytic}
		\bigmid R_\beta\bbm{x_1\\x_2} \in \cH_s \right\}\big/\Zscr_s.
\end{equation}
The norm on $\cH_{s,-1}$, and the corresponding inner product, depend on the choice of $\beta \in\cplus$, but different choices of $\beta$ give equivalent norms.

\begin{thm}\label{T:Hs-1concrete}
The space $\Hscr_{s,-1}$ in \eqref{eq:Hs-1} is a Hilbert space with the norm \eqref{eq:Hs-1norm}.
\begin{enumerate}
\item The map $\iota : \sbm{x_1\\x_2} \mapsto \sbm{x_1\\x_2}+\Zscr_s$ embeds $\cH_s$ continuously into $\cH_{s,-1}$ as a dense subspace.

A given element $\sbm{z_1\\z_2}+\Zscr_s \in \cH_{s,-1}$
is of the form $\iota\sbm{x_1\\x_2}$ for some $\sbm{x_1\\x_2} \in \cH_s$ if and only if the function $R_\alpha\sbm{z_1\\z_2}$ is not only in $\cH_s$ but in fact is in $\dom{A_s}\subset \cH_s$, for some, or equivalently for all, $\alpha\in\cplus$.  

When $\sbm{z_1\\z_2}+\Zscr_s =\iota\sbm{x_1\\x_2}$, the representative $\sbm{x_1\\x_2}\in \Hscr_s$ for the equivalence class $\sbm{z_1\\z_2}+\Zscr_s$ is uniquely determined by the decay of the first component at infinity, i.e., by the condition $\lim_{\re \eta \to+ \infty}  x_1(\eta) = 0$.

\item When $\cH_s$ is identified as a linear sub-manifold of $\cH_{s,-1}$ as in assertion one, for all $\sbm{x_1\\x_2}\in\Hscr_s$,
\begin{equation}\label{eq:AsExt}
  A_{s,-1}\left(\bbm{x_1\\x_2}+\Zscr_s\right) = (\mu,\mu_*) \mapsto  \bbm{\mu \,x_1(\mu) \\ -\mu_*\, x_2(\mu_*)}+\Zscr_s,
	\qquad \mu,\mu_*\in\cplus,
\end{equation}
is the unique extension of $A_s$ to a closed operator on $\Hscr_{s,-1}$. Moreover, 
\begin{equation}\label{eq:AsExtRes}
	(\alpha - A_{s,-1})^{-1} = R_\alpha\big|_{\Hscr_{s,-1}},\qquad \alpha\in\cplus,
\end{equation}
and $(\beta - A_{s,-1})^{-1}$ is a unitary operator from $\cH_{s,-1}$ onto $\cH_s$.
 
\item With $\cH_{s, -1}$ identified concretely as in \eqref{eq:Hs-1} and $\iota\,\Hscr_s$ identified with $\Hscr_s$, the control operator $B_s : \cU \to \cH_{s,-1}$ is
\begin{equation}\label{eq:Bs}
  B_su = (\mu,\mu_*)\mapsto \bbm{\varphi(\mu) \\1} u +\Zscr_s,\qquad u\in\Uscr,\,\mu,\mu_*\in\cplus.
\end{equation}
\end{enumerate}
\end{thm}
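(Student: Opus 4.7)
The plan is to identify the abstractly defined extrapolation space (the completion of $\Hscr_s$ in $\|x\|_{-1}:=\|(\beta-A_s)^{-1}x\|_{\Hscr_s}$) with the concrete quotient in \eqref{eq:Hs-1}, using $R_\beta$ as the bridge. By \eqref{res}, $R_\beta|_{\Hscr_s}=(\beta-A_s)^{-1}$, which makes $R_\beta$ a natural candidate for the continuous extension $(\beta-A_{s,-1})^{-1}:\Hscr_{s,-1}\to\Hscr_s$ predicted by the abstract theory.

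First I would verify that the kernel of $R_\beta$ acting on analytic $\sbm{\Yscr\\\Uscr}$-valued functions on $\cplus\times\cplus$ equals $\Zscr_s$: from \eqref{eq:resanalyt}, $R_\beta\sbm{z_1\\z_2}=0$ forces $z_1\equiv\gamma\in\Yscr$ and $z_2(\mu_*)=\widetilde\varphi(\mu_*)\gamma$, and notably this kernel is independent of $\beta$. Hence $\|\cdot\|_{-1}$ descends to a well-defined norm on the quotient in \eqref{eq:Hs-1}, and $R_\beta$ descends to an injection into $\Hscr_s$. Surjectivity holds because for any $\sbm{w_1\\w_2}\in\Hscr_s$ the functions $z_1(\mu):=(\beta-\mu)w_1(\mu)$ and $z_2(\mu_*):=(\beta+\mu_*)w_2(\mu_*)$ are analytic on $\cplus\times\cplus$ and satisfy $R_\beta\sbm{z_1\\z_2}=\sbm{w_1\\w_2}$. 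Thus the descended $R_\beta$ is an isometric isomorphism, which makes the concrete quotient a Hilbert space; different $\beta$'s yield equivalent norms by the resolvent identity for $A_s$.

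For assertion one, $\iota$ is continuous because $\|\iota\sbm{x_1\\x_2}\|_{-1}=\|(\beta-A_s)^{-1}\sbm{x_1\\x_2}\|_{\Hscr_s}$, and it has dense range because $R_\beta$ maps $\iota\,\Hscr_s$ onto $(\beta-A_s)^{-1}\Hscr_s=\dom{A_s}$, which is dense in $\Hscr_s$. The equivalence $[\sbm{z_1\\z_2}]\in\iota\,\Hscr_s \iff R_\alpha\sbm{z_1\\z_2}\in\dom{A_s}$ then follows, with $\alpha$-independence coming from the resolvent identity. For uniqueness of the representative with $x_1(\eta)\to 0$, the key point is that $\Zscr_s\cap\Hscr_s=\{0\}$: by Cor.\ \ref{cor:xtendstozero} a nonzero constant $\gamma\in\Yscr$ cannot lie in $\Hscr_s$; existence of such a representative is automatic since every element of $\Hscr_s$ has its first component vanishing at infinity by the same corollary.

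For assertion two, \eqref{eq:AsExt} agrees with $A_s$ on $\dom{A_s}$ modulo $\Zscr_s$, since the $\sbm{1\\\widetilde\varphi(\mu_*)}y$ term in \eqref{eq:As} lies in $\Zscr_s$. For general $\sbm{x_1\\x_2}\in\Hscr_s$, applying $R_\beta$ to the RHS of \eqref{eq:AsExt} and comparing with \eqref{eq:Ares'} yields $A_s(\beta-A_s)^{-1}\sbm{x_1\\x_2}=\beta(\beta-A_s)^{-1}\sbm{x_1\\x_2}-\sbm{x_1\\x_2}\in\Hscr_s$, so \eqref{eq:AsExt} defines a bounded extension $\Hscr_s\to\Hscr_{s,-1}$ of $A_s$, which must equal $A_{s,-1}$. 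The identity \eqref{eq:AsExtRes} and the unitarity of $(\beta-A_{s,-1})^{-1}$ then follow from the abstract construction. Finally, for assertion three, $B_s u$ is characterized by the requirement that $(\alpha-A_{s,-1})^{-1}B_s u$ equal the function in \eqref{resB'}; a direct calculation of $R_\alpha\sbm{\varphi(\mu)\\1}u$ reproduces exactly that function, and the injectivity of $R_\alpha$ on $\Hscr_{s,-1}$ forces $B_s u=[(\mu,\mu_*)\mapsto\sbm{\varphi(\mu)\\1}u]$. The main bookkeeping hurdle will be the careful identification of the abstract completion with the concrete quotient via $R_\beta$; once that is in place, the remaining claims amount to direct computations with \eqref{eq:resanalyt}.
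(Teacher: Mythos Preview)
Your proposal is correct and follows essentially the same route as the paper: both proofs identify the concrete quotient with the abstract extrapolation space by showing that $R_\beta$ descends to a unitary map $\Hscr_{s,-1}\to\Hscr_s$, then read off continuity and density of $\iota$, the $\dom{A_s}$-characterization of $\iota\,\Hscr_s$, and the formulas for $A_{s,-1}$ and $B_s$ by direct computation with \eqref{eq:resanalyt}, \eqref{res}, \eqref{eq:Ares'}, and \eqref{resB}. The only cosmetic differences are that the paper phrases completeness via a Cauchy-sequence argument rather than your surjectivity formula (same computation), and establishes \eqref{eq:AsExt} by inverting $\beta-\widetilde A_s$ rather than by applying $R_\beta$ and comparing with \eqref{eq:Ares'}; your uniqueness argument via $\Zscr_s\cap\Hscr_s=\{0\}$ is exactly the content of the paper's appeal to Cor.\ \ref{cor:xtendstozero}.
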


\begin{proof} The argument follows the proof of Thm.\ I.4.7, but we provide a more polished formulation. In order to establish that $\Hscr_{s,-1}$ is complete, take a Cauchy sequence $\sbm{z_{1,n}\\z_{2,n}}+\Zscr_s$ in $\cH_{s,-1}$. Then the Cauchy sequence  $R_\beta\sbm{z_{1,n}\\z_{2,n}}$ converges to some $\sbm{x_1\\x_2}$ in $\cH_s$. Solving $\sbm{x_1\\x_2}=R_\beta\sbm{z_1\\z_2}$ for $\sbm{z_1\\z_2}$, we obtain that $\sbm{z_{1,n}\\z_{2,n}}$ converges in $\Hscr_{s,-1}$ to
\begin{equation}\label{eq:alphaAdcalc}
\begin{aligned}
  \bbm{z_1\\z_2}+\Zscr_s = (\mu,\mu_*)\mapsto \bbm{(\beta - \mu)\, x_1(\mu) \\ (\beta + \mu_*)\, x_2(\mu_*)}
	+\Zscr_s, \qquad\mu,\mu_*\in\cplus. 
\end{aligned}
\end{equation}
Thus, $\Hscr_{s,-1}$ is a Hilbert space and $R_\beta\big|_{\Hscr_{s,-1}}$ clearly maps $\Hscr_{s,-1}$ unitarily onto $\Hscr_s$.

We next prove assertion one. Combining \eqref{eq:Hs-1norm} with \eqref{res}, we see that $\iota$ is continuous: for all $x\in\Hscr_s$ it holds that
$$
 \|\iota x\|_{\Hscr_{s,-1}} = \|R_\beta(x+\Zscr_s)\|_{\Hscr_s} \leq 
 \|(\beta-A_s)^{-1}\|_{\Lscr(\Hscr_s)}\,\|x\|_{\Hscr_s}.
$$
As $R_\beta\big|_{\Hscr_{s,-1}}$ is unitary from $\Hscr_{s,-1}$ onto $\Hscr_s$ and $\dom{A_s}=R_\beta\,\iota\,\Hscr_s$ is dense in $\Hscr_s=R_\beta\,\Hscr_{s,-1}$, it follows that $\iota\,\Hscr_s$ is dense in $\Hscr_{s,-1}$. 

For all $x\in\Hscr_s$ and $\alpha\in\cplus$,
$$
 	R_\alpha\,( x+\Zscr_s) =(\alpha-A_s)^{-1}x\in\dom{A_s};
$$
hence $\sbm{z_1\\z_2}+\Zscr_s=\iota\sbm{x_1\\x_2}$ only if $R_\alpha\sbm{z_1\\z_2}\in\dom{A_s}$, for all $\alpha\in\cplus$. Conversely, if there exists some $\alpha\in\cplus$ such that
$$
  \bbm{w_1\\w_2}:=R_\alpha\bbm{z_1\\z_2}=(\mu,\mu_*) \mapsto 
  \bbm{ \displaystyle \frac{z_1(\mu) - z_1(\alpha)}{\alpha - \mu} \\ 
    \displaystyle \frac{z_2(\mu_*) - \widetilde\varphi(\mu_*) \,z_1(\alpha)}{\alpha + \mu_*}} \in\dom{A_s}
$$
then by \eqref{eq:As}:
 $$
\begin{aligned}
 	\left((\alpha-A_s)\bbm{w_1\\w_2}\right)(\mu,\mu_*) &= 
  	\bbm{(\alpha-\mu)\, w_1(\mu)\\(\alpha+\mu_*)\,w_2(\mu_*)}
  		-\bbm{1\\\widetilde\varphi(\mu_*)}\lim_{\re\eta\to+\infty} \eta\,w_1(\eta),
\end{aligned}
$$
so that $\iota\,(\alpha-A_s)\sbm{w_1\\w_2}=\sbm{z_1\\z_2}+\Zscr_s$; please observe that $(\alpha-A_s)\sbm{w_1\\w_2}\in\Hscr_s$.

The decay condition picks out the unique representative in $\Hscr_s$ due to Cor.\ \ref{cor:xtendstozero}, and by writing $\iota^{-1}$ below, we mean the inverse of $\iota$ with domain $\iota\,\Hscr_s$. 

We next prove assertion two, and for this we temporarily denote the mapping in \eqref{eq:AsExt} by $\widetilde A_s$. In the beginning of the proof, we showed that
$$
	(\beta-\widetilde A_s)(x+\Zscr_s)=z+\Zscr_s\quad\iff\quad x=R_\beta\, z
$$
and $x$ lies in $\Hscr_s$ for  $z+\Zscr_s\in\Hscr_{s,-1}$, and so $x=R_\beta\,z$ if and only if $x+\Zscr_s=\iota\,R_\beta\,z$. In particular, $\beta-\widetilde A_s$ is injective:
$$
	(\beta-\widetilde A_s)(x+\Zscr_s)=\Zscr_s\quad\iff\quad x=R_\beta\,0=0,
$$
and we get $\iota\,R_\beta=(\beta-\widetilde A_s)^{-1}$. Comparing to \eqref{res}, we see that $(\beta-\widetilde A_s)^{-1}$ is the unique extension of the densely defined $\iota\,(\beta-A_s)^{-1}\iota^{-1}$ to an operator in $\Lscr(\Hscr_{s,-1})$; this implies that $\widetilde A_s$ is closed. Furthermore, $(\beta-\widetilde A_s)^{-1}=(\beta-A_{s,-1})^{-1}$, and inverting, we get  $A_{s,-1}=\widetilde A_s$. The above imply that
$$
	(\beta - A_{s,-1})^{-1} = \iota\, R_\beta\big|_{\Hscr_{s,-1}}
$$
and the identification $\iota\,\Hscr_s=\Hscr_s$ means that we may remove $\iota$ and $\iota^{-1}$ from the above formulas. Since $\beta$ is arbitrary in $\cplus$ the correctness of \eqref{eq:AsExtRes} follows.

It remains only to prove assertion three. By \eqref{eq:Hs-1norm} and \eqref{resB}, the operator $\widetilde B_s$ in \eqref{eq:Bs} maps into $\Hscr_{s,-1}$. Then \eqref{eq:AsExtRes} and \eqref{resB} give $\widetilde B_s=B_s$.
\end{proof}
 
Strictly speaking, $\Hscr_{s,-1}$ is not a RKHS, because its elements are \emph{equivalence classes} of functions rather than \emph{functions}. However, if we agree to represent an equivalence class in $\Hscr_{s,-1}$ by the function whose first component vanishes at the rigging point $\beta$ then we can obtain a reproducing kernel for $\Hscr_{s,-1}$. 

\begin{prop}
With $\beta$ the parameter used in the rigging, the space 
\begin{equation}\label{eq:Hs-1betadef}
	\Hscr_{s,-1}^\beta:=\set{\bbm{z_1\\z_2}\bigmid \bbm{z_1\\z_2}+\Zscr_s\in\Hscr_{s,-1},~ z_1(\beta)=0}
\end{equation}
with the norm \eqref{eq:Hs-1norm} is a Hilbert space of $\sbm{\Yscr\\\Uscr}$-valued analytic functions on $\cplus\times\cplus$. This space has the reproducing kernel 
\begin{equation}\label{eq:extrapolkernel}
\begin{aligned}
	&K_{s,-1}(\mu,\mu_*,\lambda,\lambda_*) := \\
	&\qquad \bbm{\beta-\mu&0\\0&\beta+\mu_*}
		K_s(\mu,\mu_*,\lambda,\lambda_*)
		\bbm{\overline\beta-\overline\lambda&0\\0&\overline\beta+\overline{\lambda_*}}.
\end{aligned}
\end{equation}
\end{prop}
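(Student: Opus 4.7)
The plan is to transport everything through the rigging operator $R_\beta$. The first step is to exhibit a canonical linear bijection between $\Hscr_{s,-1}^\beta$ and $\Hscr_{s,-1}$: a nonzero element $(\mu,\mu_*)\mapsto\sbm{1\\\widetilde\varphi(\mu_*)}\gamma$ of $\Zscr_s$ has first component equal to the constant $\gamma\neq 0$, which cannot satisfy $z_1(\beta)=0$, so $\Zscr_s\cap\Hscr_{s,-1}^\beta=\{0\}$; conversely, given any representative $\sbm{z_1\\z_2}$ of a class in $\Hscr_{s,-1}$, subtracting $\sbm{1\\\widetilde\varphi(\mu_*)}z_1(\beta)\in\Zscr_s$ produces a representative with first component vanishing at $\beta$. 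Since the norm \eqref{eq:Hs-1norm} depends only on the equivalence class, transport of structure through this bijection makes $\Hscr_{s,-1}^\beta$ a Hilbert space with that norm. For $\sbm{z_1\\z_2}\in\Hscr_{s,-1}^\beta$, the formula \eqref{eq:resanalyt} collapses to $R_\beta\sbm{z_1\\z_2}=(\mu,\mu_*)\mapsto\sbm{z_1(\mu)/(\beta-\mu)\\z_2(\mu_*)/(\beta+\mu_*)}$, so $R_\beta$ is a unitary from $\Hscr_{s,-1}^\beta$ onto $\Hscr_s$. Point-evaluations $\sbm{z_1\\z_2}\mapsto z_1(\lambda)$ and $\sbm{z_1\\z_2}\mapsto z_2(\lambda_*)$ are then continuous via composition with $R_\beta$ together with Cor.\ \ref{cor:xtendstozero}, and analyticity on $\cplus\times\cplus$ is inherited from the definition of $\Hscr_{s,-1}$.

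To identify the reproducing kernel, I would verify the reproducing property directly for the candidate in \eqref{eq:extrapolkernel}. Using the $2\times2$ block structure of $K_s$, the element $K_{s,-1}(\cdot,\cdot,\lambda,\lambda_*)\sbm{\gamma\\\nu}$ takes the form $(\mu,\mu_*)\mapsto\sbm{(\beta-\mu)F(\mu)\\(\beta+\mu_*)G(\mu_*)}$, where $F(\mu)=K_{11}(\mu,\lambda)(\overline\beta-\overline\lambda)\gamma+K_{12}(\mu,\lambda_*)(\overline\beta+\overline{\lambda_*})\nu$ and $G$ is defined symmetrically from $K_{21}$ and $K_{22}$. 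The explicit factors $\beta-\mu$ and $\beta+\mu_*$ ensure that this lies in $\Hscr_{s,-1}^\beta$, and applying $R_\beta$ strips off these scalars to yield the element $K_s^{(1)}(\cdot,\lambda)(\overline\beta-\overline\lambda)\gamma+K_s^{(2)}(\cdot,\lambda_*)(\overline\beta+\overline{\lambda_*})\nu$ of $\Hscr_s$. Unitarity of $R_\beta$ and the reproducing identities for $K_s^{(1)}$ and $K_s^{(2)}$ in $\Hscr_s$ then reduce $\Ipdp{\sbm{z_1\\z_2}}{K_{s,-1}(\cdot,\cdot,\lambda,\lambda_*)\sbm{\gamma\\\nu}}_{\Hscr_{s,-1}^\beta}$ to
$$
\Ipdp{\frac{z_1(\lambda)}{\beta-\lambda}}{(\overline\beta-\overline\lambda)\gamma}_\Yscr
+\Ipdp{\frac{z_2(\lambda_*)}{\beta+\lambda_*}}{(\overline\beta+\overline{\lambda_*})\nu}_\Uscr,
$$
and the scalar factors cancel via $\overline{\beta-\lambda}=\overline\beta-\overline\lambda$ and $\overline{\beta+\lambda_*}=\overline\beta+\overline{\lambda_*}$, leaving $\Ipdp{z_1(\lambda)}{\gamma}_\Yscr+\Ipdp{z_2(\lambda_*)}{\nu}_\Uscr$, as required.

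The main obstacle is purely bookkeeping: one must ensure that the conjugate factors appearing on the right in \eqref{eq:extrapolkernel} are precisely the ones needed to cancel the scalars $1/(\beta-\lambda)$ and $1/(\beta+\lambda_*)$ that arise when the reproducing identity of $\Hscr_s$ is applied to $R_\beta\sbm{z_1\\z_2}$ evaluated at $\lambda$ and $\lambda_*$. The rest of the argument is immediate from the preceding structural results about the unitarity of $R_\beta$ and the reproducing kernels of $\Hscr_s$.
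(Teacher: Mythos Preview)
Your proof is correct and follows essentially the same route as the paper: both use that $R_\beta$ is unitary from $\Hscr_{s,-1}^\beta$ onto $\Hscr_s$ and verify the reproducing property by transporting the $\Hscr_s$-reproducing identity through $R_\beta$, with the diagonal factors $\sbm{\beta-\mu&0\\0&\beta+\mu_*}$ and their conjugates cancelling exactly as you describe. Your write-up is somewhat more thorough (you spell out the bijection with $\Hscr_{s,-1}$ and the continuity of point evaluations), but the core computation is identical to the paper's.
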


The condition $x_1(\beta)=0$ picks out a unique representative of every equivalence class in the extrapolation space. We call $\Hscr_{s,-1}^\beta$ \emph{the $\beta$-normalized extrapolation space of $\SmallSysNode_s$}. By \eqref{eq:Hs-1betadef} the natural embedding of the state space $\Hscr_s$ into $\Hscr_{s,-1}^\beta$ is
\begin{equation}\label{eq:iotabeta}
	\iota^\beta \bbm{x_1\\x_2} := \bbm{x_1\\x_2}-\bbm{1\\\widetilde\varphi(\cdot)}x_1(\beta),
	\quad \bbm{x_1\\x_2}\in\Hscr_s.
\end{equation}

\begin{proof}
For $\sbm{x_1\\x_2}=R_\beta\sbm{z_1\\z_2}\in\Hscr_s$ with $z_1(\beta)=0$, we have from \eqref{eq:AsExt}:
$$
\begin{aligned}
	\Ipdp{\bbm{z_1(\lambda)\\z_2(\lambda_*)}}{\bbm{\gamma\\\nu}}_{\sbm{\Yscr\\\Uscr}}
		&= \Ipdp{\bbm{\beta-\lambda&0\\0&\beta+\lambda_*}e_s(\lambda,\lambda_*)\bbm{x_1\\x_2}}
			{\bbm{\gamma\\\nu}}_{\sbm{\Yscr\\\Uscr}} \\
	&=\Ipdp{\bbm{x_1\\x_2}}
		{e_s(\lambda,\lambda_*)^*
		\bbm{\overline\beta-\overline\lambda&0\\0&\overline\beta+\overline{\lambda_*}}
		\bbm{\gamma\\\nu}}_{\cH_s} \\
	&=\Ipdp{R_\beta\bbm{z_1\\z_2}}
		{R_\beta\,e_{s,-1}(\lambda,\lambda_*)^*\bbm{\gamma\\\nu}}_{\cH_s} \\
	&=\Ipdp{\bbm{z_1\\z_2}}{K_{s,-1}(\cdot,\blam)\bbm{\gamma\\\nu}}_{\cH^\beta_{s,-1}} ;
\end{aligned}
$$
thus \eqref{eq:extrapolkernel} works as the (unique) reproducing kernel of $\Hscr_{s,-1}^\beta$.
\end{proof}

\section{Recovering the unitary model for the disk}\label{sec:recovering}
 
As in \S I.6 we use the following function for mapping $\D$ one-to-one onto $\cplus$:
$$
  m_\alpha(z)=\frac{\alpha-\overline\alpha z}{1+z},\quad z\in\D,
$$
where the parameter $\alpha\in\cplus$ is arbitrary but fixed. The inverse of $m_\alpha$ is
$$
  m_\alpha^{-1}(\mu)=\frac{\alpha-\mu}{\overline\alpha+\mu},\quad\mu\in\cplus.
$$
Recalling from (I.6.5) that the Cayley transform with parameter $\alpha$ in (I.6.1) of a passive system node $\SmallSysNode$ has the transfer function $\phi_\alpha(z):=\varphi\big(m_\alpha(z)\big)$, we define a positive kernel function $\mathbf K_s(z,w)$ on $\D\times\D$ as in \eqref{eq:deBRkern} with $\phi$ replaced by $\phi_\alpha$:
\begin{equation}\label{eq:deBRkern2}
 \mathbf K_{s,\alpha}(z,w) :=
  \bbm{\displaystyle \frac{1-\phi_\alpha(z)\,\phi_\alpha(w)^*}{1-z\overline{w}}
    & \displaystyle\frac{\phi_\alpha(z)-\phi_\alpha(\overline w)}{z-\overline{w}} \\
   \displaystyle\frac{\phi_\alpha(\overline z)^*-\phi_\alpha(w)^*}{z-\overline{w}}
    & \displaystyle\frac{1-\phi_\alpha(\overline z)^*\,\phi_\alpha(\overline w)}{1-z\overline{w}}},
\end{equation}
$z,w\in\D$. The induced RKHS is denoted by $\mathbf H_{s,\alpha}$ and evaluation at $z\in\D$ in this space is denoted by $\mathbf e_{s,\alpha}(z)$.

\begin{lem}
The reproducing kernel in \eqref{eq:deBRkern2} can be written
\begin{equation}\label{eq:caylkernels}
  \mathbf K_{s,\alpha}(z,w)= \frac{1}{2\re\alpha}\bbm{\overline\alpha+\mu&0\\0&\alpha+\mu_*} K_s(\mu,\mu_*,\lambda,\lambda_*)
    \bbm{\alpha+\overline\lambda&0\\0&\overline\alpha+\overline{\lambda_*}},
\end{equation}
where $z,w\in\D$ and $\mu=m_\alpha(z)$, $\mu_*=m_{\overline\alpha}(z)$, $\lambda=m_\alpha(w)$, and $\lambda_*=m_{\overline\alpha}(w)$.

Moreover, the following mapping $\Xi_{s,\alpha}$ defines a unitary operator from $\mathbf H_{s,\alpha}$ onto $\Hscr_s$:
\begin{equation}\label{eq:XiDef}
\begin{aligned}
  \Xi_{s} \bbm{f\\g} :=& (\mu,\mu_*)\mapsto \sqrt{2\re\alpha}
    \bbm{\displaystyle\frac{f\big(m_\alpha^{-1}(\mu)\big)}{\overline\alpha+\mu}\\
      \displaystyle\frac{g\big(m_{\overline\alpha}^{-1}(\mu_*)\big)}{\alpha+\mu_*}},\\
    &\qquad \bbm{f\\g}\in\mathbf H_{s,\alpha},\,\mu,\mu_*\in\cplus.
\end{aligned}
\end{equation}
\end{lem}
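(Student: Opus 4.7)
The plan is to first establish the kernel factorization \eqref{eq:caylkernels} by direct algebra, then view $\Xi_s$ as a map between dense subspaces of kernel functions, and finally reconcile the abstract unitary extension with the pointwise formula \eqref{eq:XiDef}. With $\mu=m_\alpha(z)$, $\mu_*=m_{\overline\alpha}(z)$, $\lambda=m_\alpha(w)$, $\lambda_*=m_{\overline\alpha}(w)$, the entire block-by-block verification of the four entries of $\mathbf K_{s,\alpha}(z,w)$ reduces to the Cayley identities
\begin{equation*}
 1-z\overline w = \frac{2\re\alpha\,(\mu+\overline\lambda)}{(\overline\alpha+\mu)(\alpha+\overline\lambda)} = \frac{2\re\alpha\,(\mu_*+\overline{\lambda_*})}{(\alpha+\mu_*)(\overline\alpha+\overline{\lambda_*})},\quad
 z-\overline w = \frac{-2\re\alpha\,(\mu-\overline{\lambda_*})}{(\overline\alpha+\mu)(\overline\alpha+\overline{\lambda_*})},
\end{equation*}
together with the Cayley shifts $m_\alpha(\overline w) = \overline{\lambda_*}$ and $m_\alpha(\overline z) = \overline{\mu_*}$, which in turn give $\phi_\alpha(\overline w)=\varphi(\overline{\lambda_*})$ and $\phi_\alpha(\overline z)^*=\varphi(\overline{\mu_*})^*$; I would carry these out block by block and match against the scaled entries of \eqref{eq:conskernelexp} to obtain \eqref{eq:caylkernels}.

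The structural observation that lets the one-variable pointwise formula \eqref{eq:XiDef} interact correctly with the two-variable kernel \eqref{eq:caylkernels} is that the top row of $K_s(\mu,\mu_*,\lambda,\lambda_*)$ is independent of $\mu_*$ and the bottom row is independent of $\mu$. Applying \eqref{eq:XiDef} to a kernel function $\sbm{F\\G}=\mathbf K_{s,\alpha}(\cdot,w)\sbm{\gamma\\\nu}$, the factors $(\overline\alpha+m_\alpha(z))$ and $(\alpha+m_{\overline\alpha}(z))$ coming from \eqref{eq:caylkernels} cancel against the divisors in \eqref{eq:XiDef} after substituting $z=m_\alpha^{-1}(\tilde\mu)$ in the top component and $z=m_{\overline\alpha}^{-1}(\tilde\mu_*)$ in the bottom one, and I would read off
\begin{equation*}
  \Xi_s\,\mathbf K_{s,\alpha}(\cdot,w)\bbm{\gamma\\\nu} = \frac{1}{\sqrt{2\re\alpha}}\,e_s(\lambda,\lambda_*)^*\bbm{(\alpha+\overline\lambda)\gamma\\(\overline\alpha+\overline{\lambda_*})\nu} \in \Hscr_s.
\end{equation*}
Since $\gamma\mapsto(\alpha+\overline\lambda)\gamma$ and $\nu\mapsto(\overline\alpha+\overline{\lambda_*})\nu$ are bijections of $\Yscr$ and $\Uscr$ and $(\lambda,\lambda_*)$ sweeps $\cplus\times\cplus$ as $w$ sweeps $\D$, the $\Xi_s$-image of the kernel-function span in $\mathbf H_{s,\alpha}$ contains every kernel function of $\Hscr_s$. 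Computing inner products of pairs of kernel functions on both sides via the reproducing property and \eqref{eq:caylkernels} then shows that this action is isometric, so I would extend $\Xi_s$ by linearity and continuity to a unitary $\widehat\Xi_s:\mathbf H_{s,\alpha}\to\Hscr_s$.

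To show $\widehat\Xi_s$ agrees with the pointwise formula for $\Xi_s$ on every element, I would approximate a general $\sbm{F\\G}\in\mathbf H_{s,\alpha}$ by finite combinations $\sbm{F_n\\G_n}$ of kernel functions convergent in $\mathbf H_{s,\alpha}$. Continuity of point evaluation in the RKHS $\mathbf H_{s,\alpha}$ gives $\sbm{F_n(z)\\G_n(z)}\to\sbm{F(z)\\G(z)}$ for every $z\in\D$, so by \eqref{eq:XiDef} the values $\Xi_s\sbm{F_n\\G_n}(\mu,\mu_*)$ converge to $\Xi_s\sbm{F\\G}(\mu,\mu_*)$ pointwise; meanwhile $\Xi_s\sbm{F_n\\G_n}=\widehat\Xi_s\sbm{F_n\\G_n}\to\widehat\Xi_s\sbm{F\\G}$ in $\Hscr_s$-norm, and Cor.\ \ref{cor:xtendstozero} forces the corresponding pointwise convergence. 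The two pointwise limits agree, so $\Xi_s\sbm{F\\G}=\widehat\Xi_s\sbm{F\\G}\in\Hscr_s$ and $\Xi_s$ itself is the claimed unitary. The hard part is recognising the interplay between the one-variable pointwise formula and the two-variable kernel identity; once the top/bottom independence in $K_s$ is exploited, the remainder is routine Cayley-transform algebra.
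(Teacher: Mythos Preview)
Your overall strategy matches the paper's: verify \eqref{eq:caylkernels} block by block via Cayley identities, show $\Xi_s$ sends kernel functions of $\mathbf H_{s,\alpha}$ to (scaled) kernel functions of $\Hscr_s$, check isometricity on kernels via \eqref{eq:caylkernels}, and extend by continuity. Your additional step reconciling the abstract extension $\widehat\Xi_s$ with the pointwise formula is a nice touch the paper leaves implicit.

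There is, however, a genuine gap in your density argument. You write that ``$(\lambda,\lambda_*)$ sweeps $\cplus\times\cplus$ as $w$ sweeps $\D$,'' but this is false: $\lambda=m_\alpha(w)$ and $\lambda_*=m_{\overline\alpha}(w)$ are both determined by the \emph{single} parameter $w\in\D$, so the pair $(\lambda,\lambda_*)$ traces out only a one-complex-dimensional subset of $\cplus\times\cplus$ (for real $\alpha$ it is just the diagonal $\lambda=\lambda_*$). Consequently you have \emph{not} shown that the $\Xi_s$-image contains every kernel function $e_s(\lambda,\lambda_*)^*\sbm{\gamma\\\nu}$ of $\Hscr_s$.

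The correct density argument---which the paper gives---uses precisely the top/bottom independence you already noticed: if $\sbm{f\\g}\in\Hscr_s$ is orthogonal to $e_s(m_\alpha(w),m_{\overline\alpha}(w))^*\sbm{\gamma\\\nu}$ for all $w\in\D$, $\gamma\in\Yscr$, $\nu\in\Uscr$, then setting $\nu=0$ and using that $m_\alpha(\D)=\cplus$ gives $\ipd{f(\lambda)}{\gamma}_\Yscr=0$ for all $\lambda\in\cplus$ and all $\gamma$, hence $f=0$; then setting $\gamma=0$ and using $m_{\overline\alpha}(\D)=\cplus$ gives $g=0$. So the constrained family of kernel functions is still total in $\Hscr_s$, but this requires the separation-of-variables structure, not the (false) claim that $(\lambda,\lambda_*)$ ranges over all of $\cplus\times\cplus$.
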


Please pay attention to the complex conjugates on $\alpha$ in the formulas involving $\mu_*$ and $\lambda_*$.

\begin{proof}
The upper-left operator of \eqref{eq:caylkernels} is correct by (I.6.13) and the lower-right operator is correct due to (I.6.17). Furthermore,
$$
	z-\overline w = m_\alpha^{-1}(\mu)-\overline{m_{\overline\alpha}^{-1}(\lambda_*)}
		= \frac{(\overline{\lambda_*}-\mu) \, 2\re\alpha}
			{(\overline\alpha+\mu)(\overline\alpha+\overline{\lambda_*})}
$$
implies that the upper-right operator of \eqref{eq:caylkernels} is correct:
$$
	\frac{\phi_\alpha(z)-\phi_\alpha(\overline w)}{z-\overline w} = 
		\frac{1} 	{2\re\alpha} (\overline\alpha+\mu)
	\frac{\varphi(\mu)-\varphi(\overline{\lambda_*})}{\overline{\lambda_*}-\mu}	
		(\overline\alpha+\overline{\lambda_*}).
$$
Taking instead $z-\overline w=m_{\overline\alpha}^{-1}(\mu_*)-\overline{m_{\alpha}^{-1}(\lambda)}$, we obtain the lower-left corner of \eqref{eq:caylkernels}.

From \eqref{eq:caylkernels} and \eqref{eq:XiDef} one immediately obtains that
\begin{equation}\label{eq:XisKern}
	\Xi_{s} \, \mathbf e_{s,\alpha}(w)^* = \frac{1}{\sqrt{2\re\alpha}} \,e_s(\lambda,\lambda_*)^*
		\bbm{\alpha+\overline\lambda&0\\0&\overline\alpha+\overline{\lambda_*}}; 
\end{equation}
hence using \eqref{eq:caylkernels} in the first equality:
$$
\begin{aligned}
	\Ipdp{\Xi_{s} \, \mathbf e_{s,\alpha}(w)^*\bbm{\gamma\\\nu}}
		{\Xi_{s} \, \mathbf e_{s,\alpha}(z)^*\bbm{y\\u}}_{\Hscr_s} &= 
	\Ipdp{\mathbf K_{s,\alpha}(z,w) \bbm{\gamma\\\nu}}{\bbm{y\\u}}_{\sbm{\Yscr\\\Uscr}} \\
	&= \Ipdp{ \mathbf e_{s,\alpha}(w)^*\bbm{\gamma\\\nu}}
		{\mathbf e_{s,\alpha}(z)^*\bbm{y\\u}} _{\mathbf H_{s,\alpha}}
\end{aligned}
$$
for all $w,z\in\D$, $\gamma,y\in\Yscr$, and $\nu,u\in\Uscr$. Thus \eqref{eq:XiDef} can be extended by linearity and operator closure into an isometry from $\mathbf H_{s,\alpha}$ into $\Hscr_s$ and by \eqref{eq:XisKern} the range of this isometry contains 
$$
  \spn\set{e_s\big(m_\alpha(w),m_{\overline\alpha}(w)\big)^*\bbm{\gamma\\\nu}\bigmid w\in\D,\,\gamma\in\Yscr,\,\nu\in\Uscr}.
$$
Take $\sbm{f\\g}\in\Hscr$ perpendicular to this linear span. Setting $\nu=0$ and observing that $m_\alpha(\D)=\cplus$, we get $f=0$; then also $g=0$. Thus $\Xi_{s,\alpha}$ is unitary from $\mathbf H_{s,\alpha}$ to $\Hscr_s$. 
\end{proof}

The preceding lemma is in agreement with statements 2 of Propositions I.6.2 and I.6.3. Further, we have the following result:

\begin{prop}\label{prop:simpdbrrecover}
For all $\alpha\in\cplus$, the following claims are true:
\begin{enumerate}
\item The Cayley transform (I.6.1) with parameter $\alpha\in\cplus$ of $\SmallSysNode_s$ is the unitary operator $\sbm{\dA_{s,\alpha}&\dB_{s,\alpha}\\\dC_{s,\alpha}&\dD_{s,\alpha}}:\sbm{\Hscr_s\\\Uscr}\to\sbm{\Hscr_s\\\Yscr}$ given by
\begin{equation}\label{eq:simpcayley}
\begin{aligned}
  \dA_{s,\alpha}\bbm{h\\\ell} &= (\mu,\mu_*) \mapsto 
  \bbm{ \displaystyle \frac{\overline\alpha+\mu}{\alpha-\mu}\, h(\mu) 
    - \frac{2\re\alpha}{\alpha-\mu}\, h(\alpha)
  \\ \displaystyle \frac{\overline\alpha-\mu_*}{\alpha+\mu_*}\,\ell(\mu_*)-
    \frac{2\re\alpha}{\alpha+\mu_*}\,\widetilde\varphi(\mu_*) \, h(\alpha)}, \\
  \dB_{s,\alpha} \,u &= (\mu,\mu_*) \mapsto \sqrt{2\re\alpha}
  \bbm{\displaystyle \frac{\varphi(\mu)-\varphi(\alpha)}{\alpha - \mu}u
    \\ K_c(\mu_*,\overline\alpha) \,u}, \qquad \mu,\mu_*\in\cplus,\\ \displaystyle
  \dC_{s,\alpha} \bbm{h\\\ell} &= \sqrt{2\re\alpha}\,h(\alpha),\quad
  \dD_{s,\alpha} \, u = \varphi(\alpha)\, u,\qquad \bbm{h\\\ell}\in\Hscr_s,\, u\in\Uscr.
\end{aligned}
\end{equation}

\item The operator \eqref{eq:XiDef} implements a unitary similarity between $\sbm{\dA_s&\dB_s\\\dC_s&\dD_s}$ in \eqref{eq:simpcayley} and $\sbm{\mathbf A_s&\mathbf B_s\\\mathbf C_s&\mathbf D_s}$ in \eqref{eq:deBRunitary} built for $\phi(z):=\varphi\big(m_\alpha(z)\big)$, $z\in\D$:
\begin{equation}\label{eq:discrcontrint}
  \bbm{\dA_{s,\alpha}\,\Xi_{s}&\dB_{s,\alpha}\\\dC_{s,\alpha}\,\Xi_{s}&\dD_{s,\alpha}} =
  \bbm{\Xi_{s}\,\mathbf A_s&\Xi_{s}\,\mathbf B_s\\\mathbf C_s&\mathbf D_s}.
\end{equation}
\end{enumerate}
\end{prop}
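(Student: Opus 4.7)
The plan is to prove assertion 1 by direct substitution of the explicit component formulas for $\SysNode_s$ derived in Propositions \ref{P:resolA} and \ref{prop:consreal} into the Cayley-transform definition from Part I (equation (I.6.1)), and then to prove assertion 2 by verifying \eqref{eq:discrcontrint} block by block as a direct computation on functions. The unitarity statement in assertion 1 will not require fresh work, because it follows from the general fact in Part I that the Cayley transform of a conservative system node with parameter $\alpha\in\cplus$ is a unitary operator; this applies to $\SysNode_s$ by Thm.\ \ref{thm:basic}.1.

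For assertion 1, recall that the Cayley transform (I.6.1) has the standard form
$\dA_{s,\alpha}=2\re\alpha\cdot(\alpha-A_s)^{-1}-1$,
$\dB_{s,\alpha}=\sqrt{2\re\alpha}\,(\alpha-A_{s,-1})^{-1}B_s$,
$\dC_{s,\alpha}=\sqrt{2\re\alpha}\,C_s(\alpha-A_s)^{-1}$, and
$\dD_{s,\alpha}=\widehat\Dfrak(\alpha)=\varphi(\alpha)$.
The formula for $\dA_{s,\alpha}$ in \eqref{eq:simpcayley} then drops out by substituting \eqref{res} and simplifying via
$2\re\alpha-(\alpha-\mu)=\overline\alpha+\mu$ in the first component and $2\re\alpha-(\alpha+\mu_*)=\overline\alpha-\mu_*$ in the second. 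The formula for $\dB_{s,\alpha}$ follows directly from \eqref{resB'}, where the lower coordinate matches $\sqrt{2\re\alpha}\,K_c(\mu_*,\overline\alpha)\,u$ after rewriting $(1-\widetilde\varphi(\mu_*)\varphi(\alpha))/(\alpha+\mu_*)=K_c(\mu_*,\overline\alpha)$. The formula for $\dC_{s,\alpha}$ is a rewriting of \eqref{Cres'}, and the formula for $\dD_{s,\alpha}$ is immediate from Thm.\ \ref{thm:basic}.1.

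For assertion 2, I would verify each of the four entries of \eqref{eq:discrcontrint} separately. The easy entries are $\dD_{s,\alpha}=\mathbf D_s$, which reads $\varphi(\alpha)=\phi(0)$ and holds because $m_\alpha(0)=\alpha$, and $\dC_{s,\alpha}\Xi_s=\mathbf C_s$, which follows by evaluating \eqref{eq:XiDef} at $\mu=\alpha$ and using $m_\alpha^{-1}(\alpha)=0$, giving $\sqrt{2\re\alpha}\cdot\sqrt{2\re\alpha}\,f(0)/(\overline\alpha+\alpha)=f(0)$. The remaining identities $\dA_{s,\alpha}\Xi_s=\Xi_s\mathbf A_s$ and $\dB_{s,\alpha}=\Xi_s\mathbf B_s$ are verified by mechanical substitution of \eqref{eq:XiDef} into \eqref{eq:simpcayley} and comparison with the components of \eqref{eq:deBRunitary} composed with $m_\alpha^{-1}$ and $m_{\overline\alpha}^{-1}$. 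The crucial algebraic identity throughout is $(\overline\alpha+\mu)\,m_\alpha^{-1}(\mu)=\alpha-\mu$ (and the analogue with $\overline\alpha$ in place of $\alpha$), which converts quotients by $z$ and by $z-\overline w$ in the disk formulas into quotients by $\alpha-\mu$ and $\mu-\overline{\lambda_*}$ (and so on) in the half-plane formulas.

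The main obstacle is not conceptual but bookkeeping: one must systematically keep track of when $\alpha$ versus $\overline\alpha$ appears in the Möbius transformations, especially because the second coordinate of $\Hscr_s$ carries $m_{\overline\alpha}^{-1}$ while the first carries $m_\alpha^{-1}$, and because $\widetilde\varphi(\mu_*)=\varphi(\overline{\mu_*})^*$ interacts nontrivially with the conjugation in $\phi(\overline z)^*$ appearing in \eqref{eq:deBRunitary}. An alternative more conceptual route would be to observe that the Cayley transform preserves simplicity and conservativity by the results cited from Part I in Sec.\ \ref{sec:recovering}, so that $\sbm{\dA_{s,\alpha}&\dB_{s,\alpha}\\\dC_{s,\alpha}&\dD_{s,\alpha}}$ is a closely connected unitary realization of $\phi$ on the disk and hence, by the classical uniqueness result recalled at the start of \S\ref{sec:consmodel} introduction (or equivalently by a disk analogue of Thm.\ \ref{thm:Intertw}), must be unitarily similar to $\sbm{\mathbf A_s&\mathbf B_s\\\mathbf C_s&\mathbf D_s}$; then $\Xi_s$ is identified as the implementing unitary via its action on kernel functions in \eqref{eq:XisKern}.
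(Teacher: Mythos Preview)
Your proposal is correct and follows essentially the same route as the paper: assertion 1 is obtained by plugging the resolvent formulas \eqref{res}, \eqref{resB'}, \eqref{Cres'} into the Cayley-transform definition (I.6.1), and assertion 2 is checked block by block via direct substitution of \eqref{eq:XiDef}. Your additional closing remark about the alternative conceptual route via uniqueness of closely connected unitary realizations is extra content not in the paper's proof, but it is sound.
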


\begin{proof}
Eq. (I.6.1) gives $\dD_{s,\alpha}=\varphi(\alpha)$ and for $\dA{s,\alpha}$, the formula in \eqref{eq:simpcayley} is immediate from Prop.\ \ref{P:resolA}. The formulas for $\dB_{s,\alpha}$ and $\dC_{s,\alpha}$ are \eqref{resB'} and \eqref{Cres'} renormalized. For the intertwinement, we obtain:
$$
\begin{aligned}
	\dC_{s,\alpha}\,\Xi_{s}\bbm{f\\g} &= 2\re\alpha
		\, \frac{f\big(m_\alpha^{-1}(\alpha)\big)}{2\re\alpha}
	= \mathbf C_s \bbm{f\\g},\\
	\big(\Xi_{s}\,\mathbf B_s\, u\big)(\mu,\mu_*) &= 
	\sqrt{2\re\alpha} \bbm{\displaystyle \frac{\varphi(\mu)-\varphi(\alpha)}{\frac{\alpha-\mu}
		{\overline\alpha+\mu}\,(\overline\alpha+\mu)} \\ K_c(\mu_*,\overline\alpha) }u=\mathbf B_{s,\alpha}\, u,
	\qquad\text{and} \\
	 \left(\mathbf A_{s,\alpha}\, \Xi_{s}\bbm{f\\g}\right)(\mu,\mu_*) &= \sqrt{2\re\alpha}
		\bbm{ \displaystyle \frac{f(z)-f(0)}{\alpha-\mu} \\
		\displaystyle  \frac{z\, g(z)-\widetilde\varphi(\mu_*)\,f(0)}{\alpha+\mu_*} } \\
	&= \left(\Xi_{s}\,\mathbf A_s \bbm{f\\g}\right)(\mu,\mu_*),
\end{aligned}
$$
where again $z=m_\alpha^{-1}(\mu)=m_{\overline\alpha}^{-1}(\mu_*)$ and $m_\alpha^{-1}(\alpha)=0$.

\end{proof}

We note that \eqref{eq:simpcayley} combines the Cayley transforms (I.6.6) and (I.6.18) of $\SmallSysNode_o$ and $\SmallSysNode_c$ in a way similar to how $\SmallSysNode_s$ in Thm.\ \ref{thm:consexplicit} combines (I.5.4--6) and (I.4.43). Also, $h(\alpha)$ in \eqref{eq:simpcayley} plays the role of $\tau_{c,\alpha}\,x=(\Gamma x)(\alpha)$ in (I.6.18).

\appendix

\section{Non-invertible intertwinement}\label{app:intertwinement}

This section contains results on intertwinements, which are not part of the main story. In this section, no assumptions on passivity are made.

The standard transfer function of a system node $\SmallSysNode$ with input space $\Uscr$, state space $\Xscr$, and output space $\Yscr$ only considers the input/output behavior of the system. We now extend the concept of transfer function in a way which also provides information on the state trajectory. Namely, we extend it into the mapping $\sbm{x_0\\\widehat u(\lambda)}\mapsto\sbm{\widehat x(\lambda)\\\widehat y(\lambda)}$, where $x_0$ is the initial state of a Laplace-transformable trajectory $(u,x,y)$ of the system and the hats denote the right-sided Laplace transforms. 

\begin{defn}
By the \emph{input/state/output (i/s/o) resolvent} of a system node with i/s/o spaces $(\Uscr,\Xscr,\Yscr)$, we mean the following family of bounded linear operators from $\sbm{\Xscr\\\Uscr}$ into $\sbm{\Xscr\\\Yscr}$:
\begin{equation}\label{eq:isores}
	\Sfrak(\lambda):=\bbm{(\lambda-A)^{-1}&(\lambda-A_{-1})^{-1}B \\ C(\lambda-A)^{-1} & \widehat\Dfrak(\lambda)},
		\qquad \lambda\in\res A.
\end{equation}
\end{defn}

It is immediate from Prop.\ I.3.10 that the i/s/o resolvent of the dual system $\SmallSysNode^*$ at $\lambda\in\res{A^*}$ is $\Sfrak^d(\lambda)=\Sfrak(\overline\lambda)^*$. Furthermore, a system node is uniquely determined by its i/s/o transfer function at any single point $\alpha\in\dom{\Sfrak}$: if $\Sfrak(\alpha)$ is determined then the following operator is also determined:
$$
	\bbm{A(\alpha-A)^{-1}&\alpha\,(\alpha-A_{-1})^{-1}B \\ C(\alpha-A)^{-1} & \widehat\Dfrak(\alpha)} = \SysNode
	\bbm{(\alpha-A)^{-1}&(\alpha-A_{-1})^{-1}B \\ 0 & 1},
$$
where the last operator maps $\sbm{\Xscr\\\Uscr}$ \emph{onto} $\dom{\SmallSysNode}$; see p.\ 740 in Part I.

In the present paper, the four component operators of $\Sfrak$ in fact play a more important role than the system-node components $A$, $B$, $C$ themselves, and much of the theory could be written in terms of these operators. However, here we choose a more explicit exposition which is more in line with the notation in, e.g., \cite{ADRSBook}.

\begin{lem}\label{lem:dualinter}
The following conditions are equivalent for two system nodes $\SmallSysNode_0$ and $\SmallSysNode_1$ with state spaces $\Xscr_0$ and $\Xscr_1$, respectively, and a bounded operator $E:\Xscr_0\to\Xscr_1$:
\begin{enumerate}
\item The operator $E$ intertwines $\SmallSysNode_0$ with $\SmallSysNode_1$.
\item The operator $E^*$ intertwines $\SmallSysNode_1^*$ with $\SmallSysNode_0^*$.
\item The following operator inclusion holds:
\begin{equation}\label{eq:intincl}
	\bbm{E&0\\0&1}\SysNode_0 \subset \SysNode_1\bbm{E&0\\0&1}, 
\end{equation}
where $\SmallSysNode_1\sbm{E&0\\0&1}$ is defined on its maximal domain
$$
	\set{\bbm{x\\u}\in\bbm{\Xscr_0\\\Uscr}\bigmid\bbm{Ex\\u}\in\dom{\SysNode_1}}.
$$
\item For all $\lambda\in\res{A_0}\cap\res{A_1}$ (which contains some right-half plane of $\C$), the operator $E$ intertwines the i/s/o resolvent of $\SmallSysNode_0$ with that of $\SmallSysNode_1$:
\begin{equation}\label{eq:isoresintertw}
\begin{aligned}
	&\bbm{E(\lambda-A_0)^{-1}&E(\lambda-A_{0,-1})^{-1}B_0 \\ C_0(\lambda-A_0)^{-1}  &\widehat\Dfrak_0(\lambda)} = \\
	&\qquad \bbm{(\lambda-A_1)^{-1}E&(\lambda-A_{1,-1})^{-1}B_1 \\ C_1(\lambda-A_1)^{-1}E  &\widehat\Dfrak_1(\lambda)}.
\end{aligned}
\end{equation}

\item  There exists one $\lambda\in\res{A_0}\cap\res{A_1}$ such that \eqref{eq:isoresintertw} holds.
\end{enumerate}
\end{lem}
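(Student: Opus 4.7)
Proof plan. My intention is to prove the chain $(1)\Leftrightarrow(3)$, $(1)\Rightarrow(4)\Rightarrow(5)\Rightarrow(1)$, and finally $(1)\Leftrightarrow(2)$ via duality.

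The equivalence $(1)\Leftrightarrow(3)$ is essentially a rewriting of Definition \ref{def:intertw}: the inclusion \eqref{eq:intincl} with the maximal domain interpretation for $\SysNode_1\sbm{E&0\\0&1}$ encodes both the domain condition \eqref{eq:intertwdom} (the elements $\sbm{Ex\\u}$ with $\sbm{x\\u}\in\dom{\SysNode_0}$ lie in $\dom{\SysNode_1}$) and the action equality \eqref{eq:intertw}. No calculation is required beyond unwinding.

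For $(1)\Rightarrow(4)$, I will test the intertwinement \eqref{eq:intertw} on two special families of elements in $\dom{\SysNode_0}$. For $x\in\Xscr_0$ and $\lambda\in\res{A_0}$, the vector $\sbm{(\lambda-A_0)^{-1}x\\0}$ lies in $\dom{\SysNode_0}$ with $\SysNode_0$-image $\sbm{A_0(\lambda-A_0)^{-1}x\\C_0(\lambda-A_0)^{-1}x}$. Applying the intertwinement and then $(\lambda-A_1)^{-1}$ to the first component (using $\lambda\in\res{A_1}$) yields $E(\lambda-A_0)^{-1}=(\lambda-A_1)^{-1}E$ and $C_0(\lambda-A_0)^{-1}=C_1(\lambda-A_1)^{-1}E$. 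Similarly, for $u\in\Uscr$, the vector $\sbm{(\lambda-A_{0,-1})^{-1}B_0 u\\u}$ lies in $\dom{\SysNode_0}$ with image $\sbm{\lambda(\lambda-A_{0,-1})^{-1}B_0 u\\\widehat\Dfrak_0(\lambda)u}$, and reading off the result under intertwinement gives the remaining two entries $E(\lambda-A_{0,-1})^{-1}B_0=(\lambda-A_{1,-1})^{-1}B_1$ and $\widehat\Dfrak_0(\lambda)=\widehat\Dfrak_1(\lambda)$. Step $(4)\Rightarrow(5)$ is trivial, and the fact that $\res{A_0}\cap\res{A_1}$ contains a right half-plane is immediate from the Hille-Yosida-type estimates on contraction semigroup generators (or more generally from Part I).

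The main obstacle is $(5)\Rightarrow(1)$: one must reconstruct the full intertwinement from a single spectral value $\alpha$. The key tool is the decomposition noted on p.\ 740 of Part I: every $\sbm{x\\u}\in\dom{\SysNode_0}$ can be written as
\begin{equation*}
  \bbm{x\\u}=\bbm{(\alpha-A_0)^{-1}&(\alpha-A_{0,-1})^{-1}B_0\\0&1}\bbm{w\\u},\qquad w:=(\alpha-A_0)\bigl(x-(\alpha-A_{0,-1})^{-1}B_0 u\bigr)\in\Xscr_0,
\end{equation*}
and the analogous statement holds for $\SysNode_1$. Applying \eqref{eq:isoresintertw} at $\alpha$ to $\sbm{w\\u}$ then shows that $\sbm{Ex\\u}$ is precisely the decomposition of some element of $\dom{\SysNode_1}$, establishing \eqref{eq:intertwdom}. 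For the action equality \eqref{eq:intertw}, one expands $\bbm{A_0\&B_0}\sbm{x\\u}$ and $\bbm{C_0\&D_0}\sbm{x\\u}$ using the identity $A_{j,-1}(\alpha-A_{j,-1})^{-1}B_j u=\alpha(\alpha-A_{j,-1})^{-1}B_j u-B_j u$ for $j=0,1$, and checks that each of the four matrix entries in the i/s/o resolvent intertwinement \eqref{eq:isoresintertw} produces precisely one needed cancellation. The routine bookkeeping is the one place where care is required.

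Finally, $(1)\Leftrightarrow(2)$ is obtained by exploiting the already-established equivalence $(1)\Leftrightarrow(4)$ together with the duality $\Sfrak^d(\lambda)=\Sfrak(\overline\lambda)^*$ noted just after \eqref{eq:isores}. Concretely, taking the adjoint of the matrix identity \eqref{eq:isoresintertw} yields, after relabeling $\mu=\overline\lambda$, the identity \eqref{eq:isoresintertw} with $(\SysNode_0,\SysNode_1,E)$ replaced by $(\SysNode_1^*,\SysNode_0^*,E^*)$. This is condition $(4)$ for the dual systems, hence is equivalent to $(2)$ by what we have already shown. The reverse direction is identical. This closes the loop of equivalences.
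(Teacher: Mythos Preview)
Your argument is correct and covers all five equivalences. The logical architecture differs from the paper's in two places, and the comparison is instructive.

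For $(1)\Leftrightarrow(2)$, the paper argues directly at the level of operator inclusions: from \eqref{eq:intincl} one takes adjoints to obtain
\[
\bbm{E^*&0\\0&1}\SysNode_1^* \subset \Bigl(\SysNode_1\bbm{E&0\\0&1}\Bigr)^* \subset \Bigl(\bbm{E&0\\0&1}\SysNode_0\Bigr)^* = \SysNode_0^*\bbm{E^*&0\\0&1},
\]
which is the inclusion form (3) for the dual pair, hence (2); the reverse follows by swapping roles. Your route instead routes through the i/s/o resolvent: you first establish $(1)\Leftrightarrow(4)$ and then take the matrix adjoint of \eqref{eq:isoresintertw}, invoking $\Sfrak^d(\overline\lambda)=\Sfrak(\lambda)^*$. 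Both are valid; the paper's argument is self-contained (it does not rely on the resolvent characterisation), while yours is more systematic and makes the duality of the resolvent do the work.

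For $(1)\Rightarrow(4)$, the paper first rewrites the intertwinement as two block identities (one for $\sbm{\sbm{1&0}\\\sbm{C\&D}}$ and one for $\sbm{\sbm{A\&B}\\\sbm{0&1}}$), inverts the latter, and composes using the factorisation \eqref{eq:SisoDec} of $\Sfrak(\lambda)$. You instead test on the two generating families $\sbm{(\lambda-A_0)^{-1}x\\0}$ and $\sbm{(\lambda-A_{0,-1})^{-1}B_0u\\u}$ and read off the four entries of \eqref{eq:isoresintertw} one at a time. Your computation is slightly more concrete; the paper's is slightly more structural. For $(5)\Rightarrow(1)$ the two arguments are essentially the same: both use that $\sbm{(\alpha-A)^{-1}&(\alpha-A_{-1})^{-1}B\\0&1}$ parametrises the domain of the system node.
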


\begin{proof}
By the definition of operator inclusion, \eqref{eq:intertwdom}--\eqref{eq:intertw} are equivalent to \eqref{eq:intincl}, i.e., claim one holds if and only if claim three holds. The following calculation shows that claim three implies claim two:
$$
\begin{aligned}
	\bbm{E&0\\0&1}^*\SysNode_1^* & \subset \left(\SysNode_1\bbm{E&0\\0&1}\right)^* \subset \left(\bbm{E&0\\0&1}\SysNode_0\right)^* \\
		&= \SysNode_0^*\bbm{E&0\\0&1}^*,
\end{aligned}
$$
where the first inclusion holds for all (unbounded) operators, the second inclusion follows from \eqref{eq:intincl}, and the equality holds because $\sbm{E&0\\0&1}$ is bounded; see \cite[Thm.\ 13.2]{RudinFA73}. This proves that statement one implies statement two, and applying this implication with $\SmallSysNode_{1-k}^*$ in place of $\SmallSysNode_k$ and $E^*$ in place of $E$, we obtain also that claim two implies claim one, since the (closed) system nodes and $E$ are all equal to their double adjoints.

In order to prove that statement one implies statement four, fix $\lambda\in\res {A_0}\cap\res{A_1}$ arbitrarily and assume \eqref{eq:intertwdom}--\eqref{eq:intertw}. Then it is easy to see that also the following two identities hold:
\begin{equation}\label{eq:CDint}
	\bbm{E&0\\0&1}\bbm{\bbm{1&0}\\\bbm{C_0\&D_0}} =
	 \bbm{\bbm{1&0}\\\bbm{C_1\&D_1}}\bbm{E&0\\0&1}\bigg|_{\dom{\SmallSysNode_0}}
\end{equation}
and
$$
	\bbm{E&0\\0&1}\bbm{\bbm{A_0\&B_0}\\\bbm{0&1}} =
	 \bbm{\bbm{A_1\&B_1}\\\bbm{0&1}}\bbm{E&0\\0&1}\bigg|_{\dom{\SmallSysNode_0}}.
$$
The latter of these implies that
\begin{equation}\label{eq:ABint}
	\bbm{\bbm{\lambda&0}-\bbm{A_1\&B_1}\\\bbm{0&1}}^{-1} \bbm{E&0\\0&1} =
	\bbm{E&0\\0&1}\bbm{\bbm{\lambda&0}-\bbm{A_0\&B_0}\\\bbm{0&1}}^{-1},
\end{equation}
since $\sbm{\sbm{\lambda&0}-\sbm{A_k\&B_k}\\\sbm{0&1}}^{-1}$ maps $\sbm{\Xscr_k\\\Uscr}$ into $\dom{\SmallSysNode_k}$; see the first three pages of \S I.3, by which we can also write \eqref{eq:isores} as
\begin{equation}\label{eq:SisoDec}
	\Sfrak(\lambda)=\bbm{\bbm{1&0} \\ \bbm{\CD}}
		\bbm{\bbm{\lambda&0}-\bbm{\AB} \\ \bbm{0&1}}^{-1}.
\end{equation}
Multiplying \eqref{eq:CDint} from the right by $\sbm{\sbm{\lambda&0}-\sbm{A_0\&B_0}\\\sbm{0&1}}^{-1}$, and using \eqref{eq:ABint} and \eqref{eq:SisoDec}, we get \eqref{eq:isoresintertw} for every $\lambda\in\res{A_0}\cap\res{A_1}$.

Statement four implies statement five because $\res{A_0}\cap\res{A_1}$ is nonempty. In order to prove that statement five implies statement one, we assume that $\lambda\in\res{A_0}\cap\res{A_1}$ is such that \eqref{eq:isoresintertw} holds. Then we claim that
\begin{equation}\label{eq:isoresintertemp}
	\bbm{E&0\\0&1}\bbm{\bbm{\lambda&0}-\bbm{A_0\&B_0} \\ \bbm{0&1}}^{-1} =
	\bbm{\bbm{\lambda&0}-\bbm{A_1\&B_1} \\ \bbm{0&1}}^{-1}\bbm{E&0\\0&1}.
\end{equation}
Indeed, the top half follows from \eqref{eq:isoresintertw} and the bottom half is trivial. Multiplying \eqref{eq:isoresintertemp} by $\sbm{\sbm{\lambda&0}-\sbm{A_0\&B_0} \\ \sbm{0&1}}$ from the right, we obtain
$$
	\bbm{E&0\\0&1}\dom{\SysNode_0}\subset\dom{\SysNode_1},
$$
and further multiplying by $\sbm{\sbm{\lambda&0}-\sbm{A_1\&B_1} \\ \sbm{0&1}}$ from the left, we get
\begin{equation}\label{eq:SecondIntertw}
\begin{aligned}
	&\bbm{\bbm{\lambda&0}-\bbm{A_1\&B_1} \\ \bbm{0&1}}\bbm{E&0\\0&1}\bigg|_{\dom{\SmallSysNode_0}} =	\\
	&\qquad \bbm{E&0\\0&1}\bbm{\bbm{\lambda&0}-\bbm{A_0\&B_0} \\ \bbm{0&1}}.
\end{aligned}
\end{equation}
Hence, \eqref{eq:intertwdom} and the top half of \eqref{eq:intertw} hold. Finally, we multiply the bottom half of the identity $\sbm{E&0\\0&1}\Sfrak_0(\lambda)=\Sfrak_1(\lambda)\sbm{E&0\\0&1}$, i.e,
$$
\begin{aligned}
	\bbm{C_0\&D_0}\bbm{\bbm{\lambda&0}-\bbm{A_0\&B_0} \\ \bbm{0&1}}^{-1} &=\\
	\bbm{C_1\&D_1}\bbm{\bbm{\lambda&0}-\bbm{A_1\&B_1} \\
		\bbm{0&1}}^{-1}\bbm{E&0\\0&1},
\end{aligned}
$$
from the right by $\sbm{\sbm{\lambda&0}-\sbm{A_0\&B_0}\\\sbm{0&1}}$ and use \eqref{eq:SecondIntertw}, which gives us the bottom half of \eqref{eq:intertw}.
\end{proof}

We have the following consequences:

\begin{thm}\label{thm:intertwprops}
Let $E$ intertwine $\SmallSysNode_0$ with $\SmallSysNode_1$. Then:
\begin{enumerate}

\item If $E$ is surjective then \eqref{eq:intertwdom} and \eqref{eq:intincl} hold with equality.

\item If $E$ is unitary then $\SmallSysNode_0$ is energy preserving, co-energy preserving, or conservative if and only if $\SmallSysNode_1$ has the same property.

\item Defining $E_{-1}^\alpha:=(\alpha-A_{1,-1})\,E\,(\alpha-A_{0,-1})^{-1}$ for $\alpha \in\res{A_0}\cap\res{A_1}$, we get 
\begin{equation}\label{eq:ExtrInt}
\begin{aligned}
 E^\alpha_{-1}\big|_{\Xscr_0} &= E,\qquad E_{-1}^\alpha A_{0,-1} = A_{1,-1}\,E, \qquad E_{-1}^\alpha B_0 = B_1, \\
 E (\alpha-A_{0,-1})^{-1} &= (\alpha-A_{1,-1})^{-1}E_{-1}^\alpha,\qquad \alpha \in\res{A_0}\cap\res{A_1}.
\end{aligned}
\end{equation}
The extrapolated intertwinement $E_{-1}^\alpha$ is surjective if $E$ is surjective.  

Moreover, if $\alpha=\beta$ (the rigging parameter) then $E_{-1}:=E_{-1}^\beta$ inherits the following properties from $E$: isometricity, co-isometricity, and unitarity.
\end{enumerate}
\end{thm}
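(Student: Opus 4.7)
\medskip
\noindent\textbf{Proof proposal.} My plan is to treat the three parts in order, using throughout the i/s/o resolvent reformulation of intertwinement from Lemma \ref{lem:dualinter}, in particular its equivalent form \eqref{eq:isoresintertemp}. The main auxiliary tools will be the standard extrapolation-space identities $(\alpha-A_{k,-1})^{-1}|_{\Xscr_k}=(\alpha-A_k)^{-1}$ and $(\alpha-A_{k,-1})^{-1}A_{k,-1}x=\alpha(\alpha-A_k)^{-1}x-x$ for $x\in\Xscr_k$, together with the standard fact that the two bounded inverses appearing on either side of \eqref{eq:isoresintertemp} are bijections of $\sbm{\Xscr_k\\\Uscr}$ onto $\dom{\SmallSysNode_k}$ (see the text around \eqref{eq:SisoDec}).

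For part 1, I would compare the ranges of the two sides of \eqref{eq:isoresintertemp}. By the just-mentioned bijection and surjectivity of $E$, the left-hand side has range $\sbm{E&0\\0&1}\dom{\SmallSysNode_0}$, while the right-hand side has range $\dom{\SmallSysNode_1}$. Equating these yields equality in \eqref{eq:intertwdom} and hence in \eqref{eq:intincl}, since the domain of $\SmallSysNode_1\sbm{E&0\\0&1}$ featured in Lemma \ref{lem:dualinter}(3) then collapses to $\sbm{E&0\\0&1}\dom{\SmallSysNode_0}$.

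For part 2, if $E$ is unitary then $E^*=E^{-1}$ intertwines $\SmallSysNode_1$ with $\SmallSysNode_0$ by Lemma \ref{lem:dualinter}(2), so part 1 applied in both directions yields $\sbm{E&0\\0&1}\dom{\SmallSysNode_0}=\dom{\SmallSysNode_1}$. Given $\sbm{x_1\\u}\in\dom{\SmallSysNode_1}$, I set $x_0:=E^{-1}x_1$ and $\sbm{z_0\\y}:=\SmallSysNode_0\sbm{x_0\\u}$; intertwinement then yields $\SmallSysNode_1\sbm{x_1\\u}=\sbm{Ez_0\\y}$, and unitarity of $E$ transfers $2\re\Ipdp{z_0}{x_0}_{\Xscr_0}$ verbatim to $2\re\Ipdp{Ez_0}{Ex_0}_{\Xscr_1}$, which is precisely the energy-preservation identity for $\SmallSysNode_1$. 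Co-energy preservation follows by running the same argument on the adjoint systems, again via Lemma \ref{lem:dualinter}(2), and conservativeness is the conjunction of the two.

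For part 3, I would verify the four identities in \eqref{eq:ExtrInt} directly. The restriction $E_{-1}^\alpha|_{\Xscr_0}=E$ uses the first extrapolation identity together with $E(\alpha-A_0)^{-1}=(\alpha-A_1)^{-1}E$ from \eqref{eq:isoresintertw}. The identity $E_{-1}^\alpha A_{0,-1}=A_{1,-1}E$ reduces via the second extrapolation identity and the same intertwinement to the trivial computation $(\alpha-A_{1,-1})(\alpha(\alpha-A_1)^{-1}Ex-Ex)=A_{1,-1}Ex$. The identity $E_{-1}^\alpha B_0=B_1$ is a direct rearrangement of $E(\alpha-A_{0,-1})^{-1}B_0=(\alpha-A_{1,-1})^{-1}B_1$ from \eqref{eq:isoresintertw}, and the factorization of $E(\alpha-A_{0,-1})^{-1}$ is immediate from the definition of $E_{-1}^\alpha$. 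Surjectivity inheritance is clear since $(\alpha-A_{k,-1})^{-1}\colon\Xscr_{k,-1}\to\Xscr_k$ is a bijection. The only genuinely new point is the final statement: at $\alpha=\beta$ the operator $(\beta-A_{k,-1})^{-1}\colon\Xscr_{k,-1}\to\Xscr_k$ is by definition of the rigged norm a unitary, so $E_{-1}=(\beta-A_{1,-1})\,E\,(\beta-A_{0,-1})^{-1}$ is $E$ sandwiched between two unitaries, and isometricity, co-isometricity and unitarity are all preserved by such a sandwich. The one thing to keep track of is the direction of these sandwiching unitaries, but no deeper difficulty appears.
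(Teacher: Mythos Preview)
Your proposal is correct and follows essentially the same route as the paper's proof: part~1 via the range comparison in \eqref{eq:isoresintertemp}, part~2 by transporting the energy balance through the unitary $E$, and part~3 by unwinding the definition of $E_{-1}^\alpha$ together with the resolvent intertwinement \eqref{eq:isoresintertw}.

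Two small remarks. In part~1 your phrase ``the domain of $\SmallSysNode_1\sbm{E&0\\0&1}$ \ldots\ collapses to $\sbm{E&0\\0&1}\dom{\SmallSysNode_0}$'' is garbled, since that maximal domain lives in $\sbm{\Xscr_0\\\Uscr}$, not in $\sbm{\Xscr_1\\\Uscr}$; but the paper's own proof also only writes out the equality in \eqref{eq:intertwdom} explicitly, so you are not omitting anything the paper supplies. In part~3 your direct verification of each identity in \eqref{eq:ExtrInt} via the extrapolation-space formulas is actually cleaner than the paper's shortcut of ``taking $\alpha=0$'' in $(\alpha-A_{1,-1})E=E_{-1}^\alpha(\alpha-A_{0,-1})$; the content is identical, but your version avoids that informal step.
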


\begin{proof}
Assertion one follows from \eqref{eq:isoresintertemp} and the surjectivity of $E$:
$$
\begin{aligned}
	\dom{\SmallSysNode_1} &= \bbm{\bbm{\lambda&0}-\bbm{A_1\&B_1} \\ \bbm{0&1}}^{-1}
		\bbm{\Xscr_1\\\Uscr} \\
	&= \bbm{\bbm{\lambda&0}-\bbm{A_1\&B_1} \\ \bbm{0&1}}^{-1}\bbm{E\Xscr_0\\\Uscr} \\
	&=\bbm{E&0\\0&1}\bbm{\bbm{\lambda&0}-\bbm{A_0\&B_0} \\ \bbm{0&1}}^{-1} \bbm{\Xscr_0\\\Uscr}
	\\ &=\bbm{E&0\\0&I}\dom{\SmallSysNode_0}.
\end{aligned}
$$

Now assume that $E$ is unitary, so that \eqref{eq:intincl} holds with equality. Then, for every $\sbm{x\\u}\in\dom{\SmallSysNode_1}$ and $\sbm{z\\y}=\SmallSysNode_1\sbm{x\\u}$, we have $\sbm{E^*x\\u}\in\dom{\SmallSysNode_0}$ and $\sbm{E^*z\\y}=\SmallSysNode_0\sbm{E^*x\\u}$. For $\SmallSysNode_0$ energy preserving, we have
$$
  2\re\Ipd{z}{x} = 2\re\Ipd{E^*z}{E^*x}=\|u\|^2-\|y\|^2;
$$
thus $\SmallSysNode_1$ inherits energy preservation from $\SmallSysNode_0$. The converse implication is obtained by swapping the roles of $\SmallSysNode_0$ and $\SmallSysNode_1$ and using $E^*$ for $E$. The same argument for  $\sbm{E^*&0\\0&1}\SmallSysNode_1^* = \SmallSysNode_0^*\sbm{E^*&0\\0&1}$ gives that $\SmallSysNode_0$ is co-energy preserving if and only if $\SmallSysNode_1$ is co-energy preserving. Hence, $\SmallSysNode_0$ is conservative if and only if $\SmallSysNode_1$ is conservative.

The second line of \eqref{eq:ExtrInt} is trivial by the definition of $E_{-1}^\alpha$, and taking $\alpha=0$ in $(\alpha-A_{1,-1})E = E_{-1}^\alpha(\alpha-A_{0,-1})$, we get $A_{1,-1}E=E_{-1}^\alpha A_{0,-1}$. Then, for $\alpha\neq 0$, we on the other hand get $E_{-1}^\alpha\big|_{\Xscr_0}=E$. Line 2 of \eqref{eq:ExtrInt} combined with the upper-right corner of \eqref{eq:isoresintertw} gives $E_{-1}^\alpha B_0 = B_1$. The rest of the assertion is immediate from the definition of $E_{-1}^\alpha$ and the unitarity of $\beta-A_{k,-1}:\Xscr_k\to\Xscr_{k,-1}$.
\end{proof}

\def\cprime{$'$} \def\cprime{$'$}
\providecommand{\bysame}{\leavevmode\hbox to3em{\hrulefill}\thinspace}
\providecommand{\MR}{\relax\ifhmode\unskip\space\fi MR }
\providecommand{\MRhref}[2]{%
  \href{http://www.ams.org/mathscinet-getitem?mr=#1}{#2}
}
\providecommand{\href}[2]{#2}

\end{document}